\title{Classification of the limit shape for 1+1-dimensional FPP}
\author{Malte Ha{\ss}ler}
\address{Cornell University, Department of Mathematics, Malott Hall, 212 Garden Avenue, 14853 Ithaca, USA}
\email{mh2479@cornell.edu}
\newenvironment{maintheorem}[1]{\Theorem}{\endTheorem}
\newtheorem{theorem}{Theorem}
\newtheorem{lemma}{Lemma}
\newtheorem{proposition}[lemma]{Proposition}
\newtheorem{corollary}[lemma]{Corollary}
\newtheorem*{remark}{Remark}
\newcounter{reminder}
\newcommand{\note}[1]{}
\newcommand{\hide}[1]{}
\theoremstyle{definition}
\newtheorem{definition}{Definition}
\newcommand{\eps}{\varepsilon}
\newcommand{\R}{\mathbb R}
\newcommand{\N}{\mathbb N}
\newcommand{\Z}{\mathbb Z}
\newcommand{\J}{J}
\newcommand{\vn}{{\lceil v n \rceil}}
\newcommand{\wn}{{\lceil w n \rceil}}
\newcommand{\vnx}{{\lceil (v+x) n \rceil}}
\newcommand{\deq}{\overset{d}{=}}
\newcommand{\Ent}{\operatorname{Ent}}
\newcommand{\B}{\mathcal B}
\newcommand{\mem}[1]{\text{\emph{#1}}}
\newcommand{\E}[2]{\mathbb{E}_{#1} \left[ #2 \right]}
\newcommand{\Prob}[1]{\mathchoice
{\mathbb P \left( \rule{0pt}{11pt} #1 \right)}%
{\mathbb P \left(#1\right)}%
{\mathbb P \left(#1\right)}%
{\mathbb P \left(#1\right)}%
}
\newcommand{\1}{{\mathbbm 1}}
\begin{document}

\maketitle

\begin{abstract}
We introduce a simplified model of planar first passage percolation where weights along vertical edges are deterministic. We show that the limit shape has a flat edge in the vertical direction if and only if the random distribution of the horizontal edges has an atom at the infimum of its support. Furthermore, we present bounds on the upper and lower derivative of the time constant. 
\end{abstract}

\section{Introduction}

Classical first passage percolation has a simple definition: Consider the $\Z^2$ lattice and put weights on each edge independently and identically according to some non-negative probability distribution. The graph distance now becomes a random metric space and one can study random metric balls at the origin. Cox and Durrett \cite{coxdurrett} have shown that under mild assumptions this random ball converges to a deterministic, convex limit shape, which depends on the choice of distribution. While it is conjectured that the limit shape is differentiable if the distribution is not deterministic and strictly convex if the distribution is continuous, the nature of the limit shape is still almost completely open. There are only a few special cases where one can say something about the limit shape, most notably:
\begin{itemize}
\item If the distribution is deterministic, then the limit shape is a dilation of the $\ell^1$-diamond.
\item If the distribution is supported on $[1,\infty)$ and has an atom at $1$ of mass at least $1/2$, then the limit shape has a flat edge near the main diagonal and is differentiable at the endpoints of that edge (see \cite{durrettliggett} and \cite{marchand}).
\item For each $k$, there exists $\eps>0$ such that a random variable supported on $[1,1+\eps]$ has a limit shape that is not a polygon with $k$ or less edges (see \cite[Theorem~1.4]{coalescence}).
\end{itemize}

Properties of the limit shape are of particular interest because they yield information about the geodesics (see \cite[Lemma~4.6]{coalescence}) and the existence of bi-infinite geodesics \cite{bigeodesics}, which has implications for the disordered Ising ferromagnet.  The interested reader can find more about results and conjectures in FPP in a survey by Auffinger, Damron and Hanson \cite{50years}. 

Given the difficulty of studying the limit shape, various simplified models have been introduced.  In 1998, Sepp\"{a}l\"{a}inen \cite{dir:seppalainen} studied a model where vertical edges have constant weights $1$ and the minimizing paths are restricted to be directed in the horizontal and vertical direction. This model is later known as the SJ-model due to major contributions by Johansson \cite{dir:joh}.  Further work has been done by O'Connell and Martin \cite{dir:oconnell}, \cite{dir:martin04}, \cite{dir:martin09}. They were able to exactly compute the limit shape for exponential, geometric and Bernoulli distributions as well as a combination of them (we like to point out that the first two distributions are the ones where last passage percolation is integrable). In our work however, we aim to show results in maximal generality instead of focusing on a few distributions. 

Let us formally introduce our model. It resembles the SJ-model, but one step closer to original FPP as paths are no longer required to be directed. To our knowledge, it has not been studied before. 

\subsection{The model and main result}

Consider the $\Z^2$ lattice and let $G$ be a probability measure on $[0,\infty)$. To each vertical edge $e$ of $\Z^2$ we assign a deterministic weight $\tau_e=1$. To each horizontal edge $e$ we assign a random weight $\tau_e$ independently sampled according to $G$. A \emph{path} $p$ is a finite or infinite sequence $p=(z_0,z_1,...)$ of points in $\Z^2$ such that $z_k$ and $z_{k+1}$ are connected by an edge for all $k\ge 0$. We say that an edge $e$ lies in $p$ if $e=(z_k, z_{k+1})$ for some $k$. 

For each path $p$ we assign a random \emph{passage time}
\[
T(p)=\sum_{e \in p} \tau_e.
\]

For $(x_1,y_1),(x_2,y_2) \in \Z^2$ define
\begin{equation}
\label{eq:inftime}
T((x_1,y_1),(x_2,y_2))=\inf_p T(p)
\end{equation}
where the infimum is taken over all paths starting at $(x_1,y_1)$ and ending at $(x_2,y_2)$.

For $\alpha \in \R$, let $\lceil \alpha \rceil$ denote the smallest integer greater or equal to $\alpha$. The first result is the existence of a time constant to which the random passage times for a particular direction converge to. 

\begin{theorem}
\label{Thm:time_const}
Let $G$ be any probability measure on $[0, \infty)$. Then there exists a convex, deterministic function $\Lambda: [0,\infty) \to [0, \infty)$ depending on $G$ such that 
\begin{equation}
\Lambda(\nu)=\lim_{n\to \infty} \frac 1 n T((0,0),(n,\vn)).
\end{equation}
almost surely and in $L^1$. 
\end{theorem}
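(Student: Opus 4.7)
The plan is to apply Kingman's subadditive ergodic theorem (or Liggett's variant, which accommodates mild non-stationarity) to the array
\[
X_{m,n} := T\bigl((m, \lceil \nu m \rceil), (n, \lceil \nu n \rceil)\bigr), \qquad 0 \le m \le n.
\]
Subadditivity $X_{0,n} \le X_{0,m} + X_{m,n}$ is immediate by concatenating optimal paths, and stationarity in distribution of $(X_{m,n})$ under the shift $m,n \mapsto m+1, n+1$ follows from the i.i.d.\ weight field. The only subtlety is that $\lceil \nu(m+1) \rceil - \lceil \nu m \rceil$ oscillates with $m$; this is exactly the quasi-stationary scenario covered by Liggett's theorem. (Alternatively, for rational $\nu = p/q$ one has exact stationarity along multiples of $q$ and extends to irrational $\nu$ by monotone sandwiching.)

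The main technical obstacle is verifying the integrability $\mathbb{E}[X_{0,n}] \le Cn$, since no moment hypothesis is placed on $G$. The naive ``right then up'' path only gives $\mathbb{E}[X_{0,n}] \le n\,\mathbb{E}[\tau] + \lceil \nu n \rceil$, which is useless when $\mathbb{E}[\tau] = \infty$. I would circumvent this via a zig-zag construction that in each column $i = 1, \dots, n$ selects the cheapest of the $k$ horizontal edges in rows $0, \dots, k-1$, joined by short vertical detours; provided $k$ is large enough that $\mathbb{E}[\min(\tau_1, \dots, \tau_k)] < \infty$, this yields $\mathbb{E}[X_{0,n}] \le (k + \nu + 1)n + n\,\mathbb{E}[\min(\tau_1, \dots, \tau_k)]$. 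For extremely heavy-tailed $G$ where no such $k$ exists, a truncation argument should close the gap: prove the theorem for $\tau \wedge M$ first, define $\Lambda(\nu) := \lim_M \Lambda_M(\nu)$, and recover the untruncated statement by a matching upper bound based on paths that preferentially use rows whose horizontal weight is $\le M$.

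Once the subadditive theorem applies, it delivers the almost sure and $L^1$ convergence of $X_{0,n}/n$ to a limit. This limit is measurable with respect to the tail $\sigma$-algebra of the weight field (altering the weight of any single edge changes $X_{0,n}$ by at most a bounded quantity for all large $n$), hence is almost surely constant by Kolmogorov's 0-1 law; we define $\Lambda(\nu)$ to be this constant.

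Convexity of $\Lambda$ follows from one more concatenation: for $\nu_1, \nu_2 \ge 0$, $\alpha \in (0,1)$, and $\nu_\alpha := \alpha \nu_1 + (1-\alpha)\nu_2$, routing a path from $(0,0)$ to $(n, \lceil \nu_\alpha n \rceil)$ through the intermediate point $(\lfloor \alpha n \rfloor, \lceil \nu_1 \lfloor \alpha n \rfloor \rceil)$ gives
\begin{align*}
T\bigl((0,0),(n,\lceil \nu_\alpha n \rceil)\bigr) \le{}& T\bigl((0,0),(\lfloor \alpha n \rfloor, \lceil \nu_1 \lfloor \alpha n \rfloor \rceil)\bigr) \\
&+ T\bigl((\lfloor \alpha n \rfloor, \lceil \nu_1 \lfloor \alpha n \rfloor \rceil),(n,\lceil \nu_\alpha n \rceil)\bigr),
\end{align*}
and the second term is distributed as a passage time between points whose separation differs from $((1-\alpha)n, (1-\alpha)\nu_2 n)$ by $O(1)$. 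Dividing by $n$ and passing to the limit yields $\Lambda(\nu_\alpha) \le \alpha \Lambda(\nu_1) + (1-\alpha)\Lambda(\nu_2)$.
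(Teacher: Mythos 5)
The skeleton of your argument (subadditive ergodic theorem in rational directions, then extension, then convexity by concatenation) is the paper's, but the step you yourself flag as the main obstacle — integrability without moment assumptions — is where a genuine gap remains. Your zig-zag bound needs some finite $k$ with $\E{}{\min(\tau_1,\dots,\tau_k)}<\infty$, and such a $k$ need not exist (e.g.\ $\Prob{\tau>t}\asymp 1/\log t$ makes every finite minimum non-integrable). Your fallback for that case is only asserted: defining $\Lambda(\nu):=\lim_M\Lambda_M(\nu)$ gives the lower bound for free since truncation only lowers weights, but the matching upper bound — that paths which detour around all edges of weight above $M$ overpay by at most $\eps n$ — is exactly a continuity-under-truncation statement. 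The paper does prove such a statement (its truncation lemma), but by a separate, genuinely nontrivial path-counting and detour-tail argument, and it is not needed for this theorem at all. The paper's fix is much simpler and unconditional: because vertical edges cost exactly $1$, for the fixed displacement $(q,p)$ one searches up or down for a row whose $q$ horizontal weights sum to at most a constant $t$ chosen so that this happens with probability at least $1/2$; the search distance is geometric, so $T((0,0),(q,p))\le t+2\cdot(\text{geometric})+p$ has exponential tails and in particular finite mean for \emph{every} $G$ (the paper's exponential-decay lemma). That yields Liggett's integrability hypothesis with no case distinction, and the same tail bound controls the rounding defect needed later.

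Two smaller points. First, Liggett's theorem does not cover your primary array $X_{m,n}=T((m,\lceil\nu m\rceil),(n,\lceil\nu n\rceil))$ for irrational $\nu$: what Liggett relaxes is independence, not the requirement that $(X_{m,m+k})_{k\ge1}$ and $(X_{m+1,m+k+1})_{k\ge1}$ have the same distribution, and that requirement fails here because the vertical displacement $\lceil\nu(m+k)\rceil-\lceil\nu m\rceil$ depends on $m$. Your parenthetical alternative is the correct (and the paper's) route: rational $\nu=p/q$ along the exact lattice points $(qm,pm)$, then extend to all $\nu$ — though the extension should be run through the vertical Lipschitz bound $|T((0,0),(n,\lceil \nu n\rceil))-T((0,0),(n,\lceil w n\rceil))|\le|\lceil \nu n\rceil-\lceil wn\rceil|$ together with an exponential-tail bound on the defect $T((n,\lceil\nu n\rceil),(u(n),\nu u(n)))$ and Borel--Cantelli for the almost sure statement, rather than ``monotone sandwiching'' (passage times are not monotone in the endpoint height, only $1$-Lipschitz). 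Second, your tail-triviality argument for constancy of the limit works but is unnecessary — ergodicity of the i.i.d.\ environment already gives a constant limit in Liggett's theorem, which is how the paper proceeds; your convexity argument is the standard one and is fine.
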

The theorem is a standard result proved using the subadditive ergodic theorem. We will prove it in Section~\ref{Sec:appendix} together with the next lemma.

\begin{lemma}
\label{Lem:positive_time}
$\Lambda(v)=0$ if and only if $v=0$ and $G=\delta_0$.
\end{lemma}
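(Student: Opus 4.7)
The plan is to split the proof into three cases. For $v > 0$: any path from $(0,0)$ to $(n, \lceil vn\rceil)$ contains at least $\lceil vn\rceil$ vertical edges (the net vertical displacement), each of deterministic weight $1$; hence $T((0,0),(n,\lceil vn\rceil)) \ge \lceil vn\rceil$, and Theorem~\ref{Thm:time_const} yields $\Lambda(v) \ge v > 0$. For $v = 0$ and $G = \delta_0$: every horizontal edge has weight $0$ almost surely, so the straight horizontal path gives $T((0,0),(n,0)) = 0$ and hence $\Lambda(0) = 0$.

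The main case is $v = 0$ with $G \ne \delta_0$. Since $G$ has positive mass outside $\{0\}$, fix $\epsilon, p > 0$ with $p = G([\epsilon, \infty))$. The map $\tilde\tau_e := \epsilon \cdot \mathbf{1}[\tau_e \ge \epsilon]$ on horizontal edges satisfies $\tilde\tau_e \le \tau_e$, so the induced passage time $\tilde T$ is dominated by $T$, and it suffices to prove positivity of the corresponding time constant $\tilde\Lambda(0)$ in the Bernoulli model with horizontal weights in $\{0, \epsilon\}$ and $\mathbb{P}(\tilde\tau_e = \epsilon) = p$. My strategy is a union-bound argument showing $\mathbb{P}(T((0,0),(n,0)) \le cn) \to 0$ for $c = c(p, \epsilon)$ sufficiently small. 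On this event, the optimal path $\pi$ has $V(\pi) \le cn$ vertical edges and $B(\pi) \le cn/\epsilon$ bad horizontal edges, hence is confined to the strip $|y| \le cn/2$. For each fixed path $\pi$ with $H(\pi)$ horizontal edges, the probability that $V(\pi) + \epsilon B(\pi) \le cn$ is bounded above by $\mathbb{P}(\mathrm{Bin}(H(\pi), p) \le cn/\epsilon)$, which by Chernoff is exponentially small in $H(\pi)$ whenever $H(\pi) > cn/(p\epsilon)$. The number of edge-simple paths with bounded $V$ and $H$ inside the strip grows at most exponentially in $n$, so for $c$ small enough the union bound should converge.

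The principal obstacle is calibrating $c$ to balance entropy against probability: since good (zero-weight) horizontal edges are free, the path is not a priori constrained in horizontal length, and the counting step must be handled with care. Before running the union bound I would first argue that the optimal path stays in a bounded box $[-Cn, (C+1)n] \times [-cn/2, cn/2]$ with high probability, using the fact that any large horizontal excursion must traverse many edges of which a positive fraction are bad (again by Chernoff). Once the box is fixed, the counting step pits the exponential-in-$H$ Chernoff tail against the exponential-in-$(V+H)$ count of paths, and for $c$ small enough the former dominates, giving $\Lambda(0) > 0$.
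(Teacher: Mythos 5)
Your overall skeleton matches the paper's proof: reduce to the case $v=0$, compare with a two-valued environment, and show $\mathbb{P}(T((0,0),(n,0))\le cn)\to 0$ by a union bound that pits a path count against a Chernoff bound on the number of cheap horizontal edges. The gap is precisely at the step you yourself flag as the principal obstacle, and the remedy you propose does not close it. Restricting the geodesic to a box $[-Cn,(C+1)n]\times[-cn/2,cn/2]$ does not reduce the \emph{entropy rate} of the paths you union over: an edge-simple path still has at least $n$ (and possibly order $Cn$) horizontal edges, a generic count of edge-simple paths is exponential in the total number of edges (roughly $\log 3$ per edge), while the Chernoff gain per horizontal edge is at most $-\log(1-p)$ with $p=G([\varepsilon,\infty))$. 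Since $G(\{0\})$ may be arbitrarily close to $1$, $p$ can be arbitrarily small, so $3(1-p)>1$ and the union bound over all edge-simple paths in the box diverges no matter how small you take $c$: the parameter $c$ controls the vertical entropy and the binomial threshold, not the horizontal entropy. The same circularity affects your preliminary confinement step: to show the optimal path does not make long horizontal excursions along zero-weight edges you must again beat the entropy of all such excursions, which is the same unproved union bound.

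What makes the argument work is a structural fact about paths with few vertical edges, and this is what the paper uses. Since there is always a semi-directed geodesic (loops are erased and left-moves are replaced by vertical shortcuts), it suffices to union over semi-directed paths; such a path from $(0,0)$ to $(n,0)$ has exactly $n$ horizontal edges and is determined by its vertical jumps, so with at most $2\varepsilon n$ vertical edges the count is $S(n,2\varepsilon n)\binom{n}{\varepsilon n}$ as in Lemma~\ref{Lem:pathcounting}, of size $e^{O(\varepsilon\log(1/\varepsilon))n}$, which is beaten by the fixed exponential rate $p^{(1-\varepsilon)n}$ (in the paper's notation) coming from the cheap edges. Alternatively, without invoking semi-directedness, note that an edge-simple path with $V$ vertical edges decomposes into at most $V+1$ maximal horizontal runs, each monotone, so the number of such paths with $H$ horizontal edges is at most $\binom{H+V}{V}4^{V+1}$ --- exponential in $V\log(H/V)$, not in $H$ --- and then your Chernoff bound (which decays like $e^{-\Theta(p)H}$ once $c\le p\varepsilon/2$) dominates the sum over all $H\ge n$ and $V\le cn$ for $c$ small. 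With either counting input your proof goes through; as written, the decisive convergence claim ``the count grows at most exponentially in $n$, so for $c$ small the union bound converges'' is unjustified, and false for the naive count when $G$ has a large atom at $0$.
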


\begin{definition}
\label{Def:limit_shape}
For $G\neq \delta_0$ and $0\le \theta \le \frac \pi 2$, we define the curve
\[
\B(\theta)=\frac 1 {\Lambda(\tan(\theta))\cos(\theta)}.
\]
The \emph{limit shape} $\B_G$ is the closed area embodied by the four symmetrical copies of the curve $\B(\theta)e^{i\theta}$, 
$\B(\theta)e^{-i\theta}, -\B(\theta)e^{i\theta}, -\B(\theta)e^{-i\theta}$. The union of the four curves then form the boundary of the limit shape $\partial \B_G$. 
\end{definition}

Moreover, we define the random balls:
\[
B(t)=\{x \in \Z^2: \, T((0,0),x)\le t \}.
\]

We can then state the analog of the Cox--Durrett limit shape theorem, which we will prove in Section~\ref{ss:limit_shape}. Note that unlike classical FPP, we no longer require a mean bound nor that $G(0)$ is larger than the critical probability for Bernoulli percolation. 

\begin{theorem}\label{Thm:limit_shape}
Let $G\neq \delta_0$. The limit shape $\B_G$ is convex, compact, nonempty and for each $\eps>0$
\[\Prob{ (1-\eps)\B_G \subset \frac{B(t)} t \subset (1+\eps)\B_G \, \text{ for all large } t} =1. \]
\end{theorem}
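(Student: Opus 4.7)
The plan is to extend $\Lambda$ to a continuous norm $g$ on $\R^2$ whose unit ball is $\B_G$, and then to establish the uniform convergence
\[
\lim_{g(x)\to\infty}\frac{T((0,0),x)}{g(x)}=1 \quad \text{almost surely},
\]
from which both containments for $B(t)/t$ follow immediately by rescaling.

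First I would set $g(x,y)=x\,\Lambda(y/x)$ for $x>0$ and $g(0,y)=|y|$, then extend to all of $\R^2$ using the reflection symmetries $g(-x,y)=g(x,-y)=g(x,y)$ of the model. Convexity of $\Lambda$ combined with $1$-homogeneity gives subadditivity of $g$ in the first quadrant, and the symmetries extend this to the whole plane. Lemma~\ref{Lem:positive_time} forces $\Lambda>0$ on $[0,\infty)$, and combined with $g(0,y)=|y|>0$ this makes $g$ strictly positive off the origin. The bound $\Lambda(v)\ge v$ (each path needs at least $\lceil vn\rceil$ vertical edges) shows $g$ is proper, hence $g$ is a continuous norm on $\R^2$. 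Converting the polar formula for $\B(\theta)$ into Cartesian coordinates identifies $\B_G=\{v\in\R^2:g(v)\le 1\}$, and as the unit ball of a continuous norm this is convex, compact, and has nonempty interior, settling the three topological claims.

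For the uniform convergence, I would fix $\eps>0$ and, by compactness of the $g$-unit sphere, pick finitely many directions $v_1,\ldots,v_k$ on $\partial\B_G$ that $\delta$-cover it in $g$, with $\delta$ chosen small in terms of $\eps$. Theorem~\ref{Thm:time_const} applied in each direction $v_i=(a_i,b_i)$ (with the pure vertical case handled by the deterministic weights) gives simultaneously for all $i$ the almost sure convergence $R^{-1}T((0,0),\lceil Rv_i\rceil)\to g(v_i)=1$ as $R\to\infty$. For an arbitrary $x\in\Z^2$ with $g(x)=R$ large, I would choose the closest $v_i$ so that $g(x-Rv_i)\le\delta R$ and apply
\[
\bigl|T((0,0),x)-T((0,0),\lceil Rv_i\rceil)\bigr|\le T(\lceil Rv_i\rceil,x),
\]
reducing the uniform claim to showing $T(\lceil Rv_i\rceil,x)\le \eps R/2$ almost surely, uniformly over $x$ with $g(x-Rv_i)\le \delta R$.

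The main obstacle is this local bound, since classical shape-theorem arguments rely on moment bounds or on $G(0)$ being supercritical, neither of which the present paper assumes. I would instead exploit the structure of the model: a path from $\lceil Rv_i\rceil$ to $x$ that first moves vertically to the correct row and then horizontally incurs only a deterministic vertical cost equal to the vertical displacement (which is bounded by $\delta R$ thanks to $|w_2|\le g(w)$, an immediate consequence of $\Lambda(v)\ge v$) plus a 1D passage time along a single row of length $O(\delta R)$ which, by Theorem~\ref{Thm:time_const} at slope zero applied row by row, is at most $C\delta R$ for a constant $C$ depending only on $G$. Because only finitely many rows arise (indexed by $i$), the simultaneous a.s.\ convergence upgrades to the required uniform bound along a geometric subsequence $R\in 2^\N$ via Borel--Cantelli, and is propagated to all large $R$ by subadditivity. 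Choosing $\delta$ small enough relative to $\eps$ and $C$ completes the uniform convergence, and rescaling produces $(1-\eps)\B_G\subset B(t)/t\subset(1+\eps)\B_G$ for all large $t$ with probability one.
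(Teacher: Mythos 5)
Your construction of the norm $g$ and the identification of $\B_G$ with its unit ball is fine, and the overall ``finite net of directions plus local connector bound'' scheme is a legitimate route to a shape theorem. But the step you yourself identify as the main obstacle is exactly where the argument has a genuine gap. To control $T(\lceil Rv_i\rceil,x)$ uniformly over all $x$ with $g(x-Rv_i)\le\delta R$ and all large $R$, you must bound a family of roughly $(\delta R)^2$ passage times whose endpoints (in particular the rows in which your horizontal segments lie, and the horizontal offsets) vary with both $R$ and $x$. Your claim that ``only finitely many rows arise (indexed by $i$)'' is false: the finitely many objects are the directions $v_i$, not the rows, and the number of relevant rows grows linearly in $R$. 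Consequently the almost sure directional convergence of Theorem~\ref{Thm:time_const} cannot be ``upgraded via Borel--Cantelli'': Borel--Cantelli needs summable probability bounds for the bad events, and an a.s.\ convergence statement for each fixed direction supplies no such quantitative tails for a growing family of endpoints. What is needed here is precisely a uniform right-tail estimate such as Theorem~\ref{Thm:upper_deviation} (or at least Lemma~\ref{Lem:expo_decay} combined with a union bound over the polynomially many nearby lattice points), none of which your proposal invokes. Relatedly, if your ``1D passage time along a single row'' is meant literally as the sum of the $G$-weights along that row, it is not $O(\delta R)$ without a moment assumption, which the paper deliberately avoids; if it is meant as the unrestricted passage time between the two endpoints, then you are back to needing the uniform tail bounds just described.

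For comparison, the paper does not attempt a direct uniform-convergence argument at all: it argues by contradiction, splitting according to whether the slope $x_2/x_1$ of the offending points stays in a compact interval $[0,v_0]$ --- where the uniform large deviation bounds (Theorems~\ref{Thm:upper_deviation} and~\ref{Thm:lower_deviation_dir}) give the contradiction --- or becomes steep, where it pinches $T(0,x)$ between the deterministic lower bound $x_2+t_0x_1$ and an upper bound obtained by going horizontally then vertically, using Proposition~\ref{Prop:asymp} to see that $\Lambda(x_2/x_1)x_1$ is itself within $o(\|x\|)$ of $x_2+t_0x_1$. Your plan could be repaired by importing the same ingredients (uniform LD bounds on compact slope ranges for the connector estimates, plus the asymptote argument or an analogous pinching near the vertical direction), but as written the crucial uniformity step is unsupported.
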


One can see that the points $(1/\Lambda(0), 0)$ and $(0,1)$ lie on the boundary of the limit shape, so by convexity, the curve $\B(\theta)e^{i\theta}$ cannot lie below the straight line between the points. For $x\in \partial\B_G$, we say that $x$ \emph{lies on a flat edge} if there exists $y \in \partial\B_G\setminus \{x\}$ such that the line segment from $x$ to $y$ is completely contained in $\partial\B_G$. 
We are now able to state the main theorem:

\begin{maintheorem}{Main Theorem}\label{Thm:classification}
Let $G\neq \delta_0$ and define $t_0:=\inf \{x: G([0,x])>0\}$. Then $(0,1)$ lies on a flat edge of the limit shape if and only if $G(\{t_0\})>0$. 
\end{maintheorem}

In particular, if $G$ has no atom at the infimum of its support, then the limit shape is not a polygon. Such a result is far from being known in the classical FPP model and the number of flat edges is useful for understanding the behavior of geodesics (see for example \cite{coalescence}). 

\subsection{Motivation and organization of the paper}

There has been recent progress in showing the differentiability of the limit shape by Yuri Bakhtin and Douglas Dow for discrete polymer models \cite{polymer} and Hamilton--Jacobi--Bellman equations \cite{hjb}, both of which can be interpreted as continuous versions of FPP. Their result is universal as it holds for a large class of distributions. 

Our model is initially motivated by the fact that it is a natural discretization of their polymer model in the zero temperature setting. While the discreteness prevents us from using the methods in \cite{polymer} to its full potential, we can still derive bounds on the upper and lower derivative of the time constant that link it to the local behavior of geodesics (Theorem~\ref{Thm:diffbounds}). The main idea here is to replace continuous shear maps, which are not applicable anymore in our discrete model, by a random analog that is supported over the integers. 

We will present and derive these inequalities in the following section after introducing basic properties of geodesics. The inequalities are not essential for us to prove the main theorem, and depending on the reader's interest one can skip forward to Section~3. However, the concepts presented appear to be rather new in the field and provide some intuition why the main theorem holds.

\section{Connecting the derivative of the time constant with the local geometry of geodesics}

\subsection{Geodesics}

A path $p$ from $(x_1,y_1)$ to $(x_2,y_2)$ is a  \emph{geodesic} if it assumes the infimum in \eqref{eq:inftime}.

We call a path $p$ \emph{(right) semi-directed} if
\[
z_{k+1}-z_k \in \{(1,0), (0,1), (0,-1) \} \quad \forall \, k \ge 0. 
\]
In other words, the path $p$ is allowed to move up, down and right, but not left. 

\begin{proposition}
Suppose $\gamma$ is a geodesic from $(x_1,y_1)$ to $(x_2,y_2)$ with $x_2 \ge x_1$. Then there exists a semi-directed, non-intersecting geodesic $\gamma'$ from $(x_1,y_1)$ to $(x_2,y_2)$.
\end{proposition}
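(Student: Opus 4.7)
The plan is to construct, from the given geodesic $\gamma=(z_0,z_1,\ldots,z_n)$, a canonical semi-directed, non-intersecting path $\gamma'$ from $(x_1,y_1)$ to $(x_2,y_2)$ for which $T(\gamma')\le T(\gamma)$. Since $\gamma$ achieves the infimum in \eqref{eq:inftime}, equality must then hold, and so $\gamma'$ is itself a geodesic. The key feature of the model being exploited is that every vertical edge has the constant weight $1$, so the vertical cost of any path is simply its total vertical displacement.

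For each column $i\in\{x_1,\ldots,x_2-1\}$ I first pick a preferred horizontal crossing of $\gamma$. Specifically, let $t_i$ be the \emph{last} time index at which $\gamma$ sits in a column of index $\le i$. Because $\gamma$ ends at column $x_2>i$ and column coordinates can change by at most one per step, $z_{t_i}$ lies in column $i$ while $z_{t_i+1}$ lies in column $i+1$, both at a common row $y^*_i$. Since $z_{t_i+1}$ itself occurs at a time in column $\le i+1$, the checkpoints satisfy $t_{x_1}<t_{x_1+1}<\cdots<t_{x_2-1}$.

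I then define $\gamma'$ to be the semi-directed path that starts at $(x_1,y_1)$, moves vertically to $(x_1,y^*_{x_1})$, and then for each $i=x_1,\ldots,x_2-1$ takes a rightward step to $(i+1,y^*_i)$ followed by a vertical segment to $(i+1,y^*_{i+1})$ (with the convention $y^*_{x_2}:=y_2$). Since the $x$-coordinate is weakly monotone along $\gamma'$ and each column is swept by a single vertical segment, $\gamma'$ is automatically non-intersecting. Moreover, its horizontal cost is bounded by that of $\gamma$, because every horizontal edge of $\gamma'$ is itself traversed by $\gamma$ at time $t_i$.

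The main obstacle is the vertical estimate: a priori $\gamma$ may oscillate far to the left of column $x_1$, and one must rule out that its vertical edge count is smaller than that of $\gamma'$. Choosing $t_i$ as the \emph{last} time in column $\le i$ is precisely what makes the checkpoints time-ordered, so that the triangle inequality applied between consecutive checkpoints in $0\le t_{x_1}<t_{x_1+1}<\cdots<t_{x_2-1}<n$ yields
\[
|y_1-y^*_{x_1}|+\sum_{i=x_1}^{x_2-2}|y^*_i-y^*_{i+1}|+|y^*_{x_2-1}-y_2|\;\le\;\text{(total vertical distance traversed by $\gamma$)},
\]
which equals the number of vertical edges of $\gamma$. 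Combined with the horizontal bound this gives $T(\gamma')\le T(\gamma)$; the degenerate case $x_1=x_2$ is handled by taking $\gamma'$ to be the straight vertical segment.
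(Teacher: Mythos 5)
Your proposal is correct, and it proves the statement by a genuinely different route than the paper. The paper proceeds by iterative local surgery: first erase loops, then, each time the geodesic makes a left step at some column $x_3<x_2$, replace the leftward excursion (up to the next return to column $x_3$) by a straight vertical segment, using that the passage time between two points in the same column equals their vertical distance; excursions past the line $\{x_2\}\times\Z$ are handled by truncating with a straight vertical run to the endpoint, and the procedure is repeated until no left steps remain. Your argument is instead a one-shot global construction: you select the last-exit crossing time $t_i$ of each column $i\in\{x_1,\dots,x_2-1\}$, build the staircase path through the crossed horizontal edges, and compare costs separately --- the horizontal edges of $\gamma'$ are a subset (over distinct time indices) of those traversed by $\gamma$, while the vertical cost of $\gamma'$ is dominated by the total vertical edge count of $\gamma$ via the time-ordering $t_{x_1}<\dots<t_{x_2-1}$ and the fact that vertical edges have deterministic weight $1$. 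What your approach buys is that non-intersection and semi-directedness hold by construction and there is no need to argue that the iterative modification terminates or preserves non-intersection (a point the paper leaves informal); what the paper's approach buys is brevity and the stronger local statement that each individual surgery already preserves geodesic status. Both arguments rest on the same two model features: nonnegativity of horizontal weights and constant vertical weights.
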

\begin{proof} 
Erasing loops cannot increase the passage time, so we can clearly obtain a non-intersecting geodesic. Suppose $\gamma$ was not semi-directed already. Then there exists $x_3, y_3 \in \Z$ and $k \in \N$ such that $z_k=(x_3,y_3)$ and $z_{k+1}=(x_3-1, y_3)$. The first time $\gamma$ hits the vertical line $\{x_2\}\times \Z$, we can take a straight line to $(x_2,y_2)$ and the resulting path is a geodesic because the edge weights are non-negative. Thus, we may assume that $x_3<x_2$. Then there exists $k'>k+1$ such that $z_{k'}=(x_3,y_4)$ lies on the vertical line $\{x_3\}\times\Z$. Let $\gamma'$ be the path that goes from $z_k$ straight up or down to $z_{k'}$. Since $T(z_k,z_{k'})=|y_4-y_3|$, the path $\gamma'$ is a geodesic. By repeating this step every time a left-turn occurs, we obtain a semi-directed geodesic. 
\end{proof}

From the proof it is also clear that if $G(0)=0$, then almost surely every geodesic is semi-directed and non-intersecting. 

\begin{proposition}
\label{Prop:geodesics}
For all $m\in \Z$ and $n \in \Z_{\ge 0}$, there always exists a geodesic connecting the origin and $(n,m)$ and the number of such geodesics is finite. 
\end{proposition}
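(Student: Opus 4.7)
The plan is to combine the preceding proposition's reduction to semi-directed self-avoiding paths with a compactness argument that confines such paths to a finite box via an L-shaped reference path. The crucial structural fact is that every vertical edge has deterministic weight one, so a passage-time bound forces a bound on the vertical range of the path.

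First, I would observe that the construction in the preceding proposition never actually uses the geodesic hypothesis on the input: starting from any finite path from $(0,0)$ to $(n,m)$ with $n\ge 0$, loop-erasure and left-turn straightening produce a semi-directed self-avoiding path with no larger passage time. Hence the infimum defining $T((0,0),(n,m))$ coincides with the infimum over semi-directed self-avoiding paths. Since such a path cannot re-enter a column after leaving it, it crosses each vertical line $\{x\}\times\Z$ for $x\in\{0,1,\ldots,n\}$ in a single monotone segment, so it is uniquely determined by the sequence of exit heights $h_0,\ldots,h_{n-1}\in\Z$ (with the convention $h_{-1}:=0$ and $h_n:=m$), and its passage time equals
\[
\sum_{x=0}^{n-1}\tau_{((x,h_x),(x+1,h_x))}\;+\;\sum_{x=0}^{n}|h_x-h_{x-1}|.
\]

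Next, let $T_0$ denote the almost surely finite passage time of the L-shaped reference path going from $(0,0)$ straight to $(0,m)$ and then straight to $(n,m)$. For any semi-directed self-avoiding path of passage time at most $T_0$, the second (purely vertical) sum above is at most $T_0$; since a walk from height $0$ through height $h_x$ to height $m$ has length at least $|h_x|+|h_x-m|\ge 2|h_x|-|m|$, this forces $|h_x|\le (T_0+|m|)/2$ for every $x$. The set of integer sequences $(h_0,\ldots,h_{n-1})$ obeying this uniform bound is finite, so the infimum is attained on a finite set; this simultaneously yields existence of a semi-directed self-avoiding geodesic and finiteness of the collection of such geodesics.

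The only real subtlety is how to read "the number of such geodesics is finite" when $G(\{0\})>0$: non-self-avoiding paths obtained by appending zero-weight excursions to a minimizer are literally infinite in number, so the statement must be understood up to loop-erasure, which is exactly the reduction used in the first step and which collapses every geodesic onto one of the finitely many semi-directed self-avoiding minimizers identified above. I expect the main bookkeeping issue to be precisely this point; everything else is a clean compactness argument driven by the unit vertical weights.
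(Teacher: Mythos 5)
Your proposal is correct and follows essentially the same route as the paper: reduce to semi-directed, non-intersecting paths, use a reference path together with the unit vertical weights to confine all candidate minimizers to a finite set (your exit-height bound $|h_x|\le (T_0+|m|)/2$ plays the role of the paper's cylinder $C_n(h)$ with $T(p)\le h$), and conclude that the minimum is attained over finitely many paths. Your closing remark about the counting ambiguity when $G(\{0\})>0$ is a fair observation: the paper's argument, like yours, literally establishes finiteness only of the semi-directed, non-intersecting geodesics, which is the interpretation used in the rest of the paper.
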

\begin{proof}
Define the cylinder sets
\[
C_n(h):=\{ (k,j) \in \Z^2: k \in \{0,...,n\}, \, |j| \le  h\}.
\]
Pick a semi-directed path $p$ going from the origin to $(n,m)$ and choose $h \in \N$ large enough such that $p \in C_n(h)$ and $T(p)\le h$. Let $p'$ be any semi-directed path going from the origin to $(n,m)$ that is not completely contained in $C_n(h)$. Then $T(p')>h \ge T(p)$, so $p'$ is not a geodesic. Since the number of semi-directed, non-intersecting paths from the origin to $(n,m)$ which are completely contained in $C_n(h)$ is finite, the minimum of their passage times is assumed. 
\end{proof}

\begin{proposition}
If the distribution $G$ is continuous, geodesics are almost surely unique.
\end{proposition}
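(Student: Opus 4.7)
The plan is to reduce uniqueness to a countable union bound, using the fact that any nontrivial sum of independent continuous random variables has an atomless distribution. First I would invoke the observation made after the first proposition of this section: since $G$ is continuous we have $G(\{0\})=0$, and hence almost surely every geodesic is semi-directed and non-intersecting. By reversing paths, it then suffices to show uniqueness for every ordered pair of integer endpoints $(x_1,y_1)$ and $(x_2,y_2)$ with $x_2\ge x_1$.

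Next I would establish the key combinatorial fact that two distinct semi-directed non-intersecting paths with common endpoints must use different sets of horizontal edges. Both paths traverse each of the $x_2-x_1$ vertical strips exactly once, so their horizontal edges are ordered left to right and there is exactly one per strip; if the two sets of horizontal edges coincided, then the two paths would agree on the $y$-coordinate at every column crossing, and between consecutive crossings a non-intersecting path confined to a single column is forced to move monotonically, so the paths would be identical. The continuity step is then immediate: for distinct paths $p_1 \neq p_2$ with horizontal edge sets $S_1 \neq S_2$ and numbers of vertical edges $V_1, V_2 \in \Z_{\ge 0}$ we have
\[
T(p_1)-T(p_2) = (V_1-V_2) + \sum_{e\in S_1\setminus S_2}\tau_e \; - \sum_{e\in S_2\setminus S_1}\tau_e,
\]
and both symmetric differences are nonempty because $|S_1|=|S_2|$. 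Thus the random part is a nontrivial $\pm 1$ combination of independent $G$-distributed weights and hence has an atomless distribution, so $\Prob{T(p_1)=T(p_2)}=0$.

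Finally I would take a union bound: for each pair of endpoints the set of semi-directed non-intersecting paths is countable (each such path is a finite word in the alphabet $\{R,U,D\}$), and there are countably many endpoint pairs, giving almost sure uniqueness of the geodesic between every pair of lattice points. The main obstacle is really the middle step, namely verifying that a non-intersecting semi-directed path is determined by its set of horizontal edges; without this, the difference of passage times could be a deterministic integer and the continuity argument would not apply. Everything else is standard, and no quantitative control beyond the finiteness from Proposition~\ref{Prop:geodesics} is required.
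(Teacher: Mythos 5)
Your proof is correct and follows essentially the same route as the paper: reduce to countably many pairs of candidate paths and note that for each pair the difference of passage times is a nontrivial $\pm1$ signed sum of i.i.d.\ continuous weights (plus an integer from the vertical edges), hence atomless, so each equality event is null. Your middle step, showing that a non-intersecting semi-directed path is determined by its set of horizontal edges so that the signed sum is genuinely nontrivial, is a point the paper's proof leaves implicit, and including it is a sensible extra precaution rather than a different argument.
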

\begin{proof}
For $s,t \ge 1$, let $G(s,t)$ be the distribution of $X_1+...+X_s-Y_1-...-Y_t$, where $X_1,...,X_s, Y_1,...,Y_t$ are i.i.d.\ random variables with distribution $G$. If $G$ is continuous, so is $G(s,t)$. Now let $Z_1,Z_2,..$ be an enumeration of the edge weights. Suppose there exists two geodesics, then there exists a finite subset $\{i_1,...,i_s,j_1,...,j_t\}\subset \N$ such that $Z_{i_1}+...+Z_{i_s}-Z_{j_1}-...-Z_{j_t}=0$. Since $G(s,t)$ is continuous, for each choice of indices $\{i_1,...,i_s,j_1,...,j_t\}$, this event has zero probability. Since we sum over a countable set of indices, geodesics are almost surely unique.
\end{proof}

For the rest of the paper, we will speak of \emph{the} geodesic even if $G$ is not continuous. We mean with this that a geodesic has been chosen in some deterministic way. 

\subsection{The derivative of the time constant}

To state and prove the theorem on derivative inequalities, we need to introduce the concept of pioneer points:

\begin{definition}
Let $p$ be a semi-directed path from the origin to a point $(n,m)$. For $k=0,...,n$ let $f_p(k)\in \Z$ be such that $(k, f_p(k)) \in \Z^2$ is the first intersection of $p$ with the vertical line $\{k\}\times \Z$. In particular, $f_p(0)=0$. The points $(k, f_p(k)) \in \Z^2$ are referred to as \emph{pioneer points}. 
We also introduce the convention $f_p(n+1)=m$ (even though $(n+1,m) \not \in p$). 
\end{definition}

\begin{definition}\label{Def:urd}
Let  $p$ be a semi-directed path from the origin to a point $(n,m)$. The number of \emph{right-turns} of $p$ is defined as 
\[
R(p):=\left| k \in \{0,...,n\} : \, f_p(k)=f_p(k+1)\right|.
\] 
similarily, we define the number of \emph{up-turns} by
\[
U(p):=\left| k \in \{0,...,n\} : \, f_p(k)<f_p(k+1)\right|,
\] 
and the number of \emph{down-turns} by
\[
D(p):=\left| k \in \{0,...,n\} : \, f_p(k)>f_p(k+1)\right|.
\] 
\end{definition}

By symmetry of the model, we can meaningfully extend the time constant to $\R$ and $\Lambda(-v)=\Lambda(v)$. Since the function $\Lambda: \R \mapsto [0,\infty)$ is convex, in each direction $v \in \R$ we have upper and lower derivatives

\[
\partial^+ \Lambda(v)=\lim_{w\searrow v} \frac{\Lambda(w)-\Lambda(v)}{w-v} \quad\text{ and }\quad \partial^- \Lambda(v)=\lim_{w\nearrow v} \frac{\Lambda(v)-\Lambda(w)}{v-w}.
\]
The time constant is differentiable at $v$ if and only if $\partial^+ \Lambda(v)=\partial^- \Lambda(v)$. For $v=0$, due to symmetry, this is equivalent to $\partial^+ \Lambda(0)=0$. 

\begin{proposition}
\label{prop:diffbounds}
We have $|\partial^{+/-} \Lambda(v)| \le 1$ and for $v>0$ also $\partial^{+/-}\Lambda(v)\ge 0$. 
\end{proposition}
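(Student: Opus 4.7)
The plan is to derive both bounds from two elementary structural properties of $\Lambda$: a Lipschitz bound of constant $1$, and monotonicity on $[0,\infty)$.

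For the Lipschitz bound, I would exploit the fact that vertical edges carry deterministic weight $1$. Given $0\le v\le w$, I take any path realising $T((0,0),(n,\lceil vn\rceil))$ and append the vertical segment from $(n,\lceil vn\rceil)$ to $(n,\lceil wn\rceil)$, which contributes exactly $\lceil wn\rceil-\lceil vn\rceil$ to the passage time. This gives the deterministic inequality
\[
T((0,0),(n,\lceil wn\rceil))\le T((0,0),(n,\lceil vn\rceil))+\lceil wn\rceil-\lceil vn\rceil.
\]
Dividing by $n$, taking the almost sure limit via Theorem~\ref{Thm:time_const}, and noting that $(\lceil wn\rceil-\lceil vn\rceil)/n\to w-v$, one obtains $\Lambda(w)-\Lambda(v)\le w-v$. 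Reversing the roles (with the extension going downward) yields the symmetric inequality, so $\Lambda$ is $1$-Lipschitz on $[0,\infty)$, and by the symmetry $\Lambda(-v)=\Lambda(v)$ also on $\R$. This immediately forces $|\partial^{+/-}\Lambda(v)|\le 1$ wherever these one-sided derivatives exist.

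For the sign of the derivative when $v>0$, I would combine convexity with the reflection symmetry to show $\Lambda$ is non-decreasing on $[0,\infty)$. Indeed, for $0\le v_1<v_2$ one writes $v_1=\alpha(-v_2)+(1-\alpha)v_2$ with $\alpha=(v_2-v_1)/(2v_2)\in[0,1/2]$, and convexity combined with $\Lambda(-v_2)=\Lambda(v_2)$ gives
\[
\Lambda(v_1)\le \alpha\Lambda(-v_2)+(1-\alpha)\Lambda(v_2)=\Lambda(v_2).
\]
Monotonicity on $[0,\infty)$ immediately implies $\partial^{+/-}\Lambda(v)\ge 0$ for $v>0$.

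There is no real obstacle here; both properties follow directly from the deterministic weight on vertical edges and the reflection symmetry $(x,y)\mapsto(x,-y)$ of the model. The only technicality is keeping track of the ceiling functions in the first step, but the discrepancy contributes an $O(1/n)$ error which vanishes in the limit.
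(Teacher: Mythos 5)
Your proposal is correct and follows essentially the same route as the paper: the $1$-Lipschitz bound is obtained by appending a vertical segment of deterministic unit-weight edges to a path (or geodesic) ending at $(n,\lceil vn\rceil)$, and the sign of the derivative for $v>0$ follows from convexity together with the symmetry $\Lambda(-v)=\Lambda(v)$. The only difference is that you spell out the convexity--symmetry monotonicity argument which the paper leaves implicit.
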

\begin{proof}
Since we can always connect a geodesic going from the origin to $(n, \vn)$ by a straight vertical line to $(n, \wn)$, we have \[|T((0,0),(n,\vn))-T((0,0),(n,\wn))| \le |\wn- \vn|\] and thus $|\Lambda(v)-\Lambda(w)|\le |v-w|$. This shows $|\partial^{+/-} \Lambda(v)| \le 1$. The other bound follows from convexity and symmetry.
\end{proof}

Our second main result is an improvement on the bounds in the previous proposition which links the derivative to the local geometry of the geodesics:

\begin{theorem}
\label{Thm:diffbounds}
Let $G$ be non-deterministic and $v\ge 0$. For each $n$, let $\gamma^n(v)$ be the geodesic from the origin to $(n, \vn)$. The following bounds hold almost surely:
\begin{equation}
\label{ineq:upperdiff}
\partial^+ \Lambda(v) \le \liminf_{n \to \infty} \frac 1 n \left( U(\gamma^n(v))+R(\gamma^n(v))-D(\gamma^n(v))\right)
\end{equation}
and
\begin{equation}
\label{ineq:lowerdiff}
\partial^- \Lambda(v) \ge \limsup_{n \to \infty} \frac 1 n \left( U(\gamma^n(v))-R(\gamma^n(v))-D(\gamma^n(v))\right).
\end{equation}
\end{theorem}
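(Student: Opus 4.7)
The plan is to prove \eqref{ineq:upperdiff}; \eqref{ineq:lowerdiff} will follow by an entirely symmetric argument in which $\gamma^n(v)$ is sheared downward instead of upward. Since $\Lambda$ is convex, $\partial^+\Lambda(v)=\inf_{h>0}(\Lambda(v+h)-\Lambda(v))/h$, so it suffices, for each small $h>0$, to construct paths $p_n$ from $(0,0)$ to $(n,\lceil (v+h)n\rceil)$ with
\[
\limsup_{n\to\infty}\frac{T(p_n)-T(\gamma^n(v))}{n}\;\le\; h\cdot \liminf_{n\to\infty}\frac{U(\gamma^n(v))+R(\gamma^n(v))-D(\gamma^n(v))}{n}
\]
almost surely, and then to send $h\downarrow 0$.

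The construction is the random-shear analog advertised in the introduction. First I would enlarge the probability space by external i.i.d.\ $\mathrm{Bernoulli}(h)$ variables $\xi_0,\ldots,\xi_n$, independent of the edge weights. At every pioneer transition $k$ with $\xi_k=1$ the tail of the path is pushed up by one unit, realized locally at transition $k$ in one of three ways depending on its type: at a right-turn one inserts a single up-edge immediately before the horizontal step; at an up-turn one extends the vertical segment by one; at a down-turn one removes one unit from the down-portion. Writing $\epsilon_k=+1$ at up- and right-turns and $\epsilon_k=-1$ at down-turns, the change in vertical-edge count at transition $k$ equals $\xi_k\epsilon_k$. All subsequent pioneers are translated upward by the running total $s_j=\sum_{k<j}\xi_k$, so the modified path terminates at $(n,\vn+\sum_k\xi_k)$, which by the law of large numbers lies within $O(\sqrt n)$ of $(n,\lceil (v+h)n\rceil)$ almost surely; any residual discrepancy is absorbed by $O(\sqrt n)$ extra vertical edges appended at the end for a total correction of $o(n)$. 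The passage time then decomposes as
\[
T(p_n)=T(\gamma^n(v))+\sum_{k=0}^{n}\xi_k\epsilon_k+\Delta_n+o(n),
\]
where $\Delta_n=\sum_{k=0}^{n}[\tau_k^{(s_{k+1})}-\tau_k^{(0)}]$ is the sum of weight differences between each transition's shifted horizontal edge and its original one.

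Conditional on $\gamma^n(v)$, the sum $\sum_k\xi_k\epsilon_k$ has mean $h(U+R-D)(\gamma^n(v))$ and variance at most $n+1$, so a Borel--Cantelli argument along a deterministic sub-sequence of $h$'s gives $n^{-1}\sum_k\xi_k\epsilon_k=h\cdot n^{-1}(U+R-D)(\gamma^n(v))+o(1)$ almost surely, which supplies the leading term of the claimed bound. The hard part will be controlling $\Delta_n$: each individual swap trades a weight chosen atypically small by the geodesic for a generic weight drawn from $G$, so naive single-transition bounds would make $\Delta_n$ of order $n$. The shear idea rescues the argument because the shifts $\{s_j\}$ are measurable with respect to the geodesic-independent $\sigma$-algebra generated by $\{\xi_k\}$, and because the shifted positions $(k,y_k^{\text{orig}}+s_{k+1})$ asymptotically avoid the horizontal edges actually used by $\gamma^n(v)$, so $\tau_k^{(s_{k+1})}$ is asymptotically independent of the geodesic's chosen weights. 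An ergodic-averaging argument against the translation-invariant weight field, combined with a second-moment estimate exploiting the independence of $\{\xi_k\}$ from $\{\tau_e\}$, then promotes this to $\Delta_n=o(n)$ almost surely. Sending $h\downarrow 0$ converts the resulting quotient into \eqref{ineq:upperdiff}; for \eqref{ineq:lowerdiff} the analogous downward shear gives $\epsilon_k=-1$ at up-turns and $\epsilon_k=+1$ at right-turns and down-turns, so the leading cost becomes $-(U-R-D)$ and the inequality reverses as required.
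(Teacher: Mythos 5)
Your construction relocates the tail of the geodesic to new heights, so the modified path $p_n$ uses horizontal edges at the shifted positions, and you must control $\Delta_n=\sum_k\bigl[\tau_k^{(s_{k+1})}-\tau_k^{(0)}\bigr]$. The claim that independence of the shifts from the environment, plus the fact that the shifted edges are (asymptotically) not the geodesic's edges, forces $\Delta_n=o(n)$ is not just unproven --- it is false in general, and this is the fatal gap. Independence of the swapped-in weights from the geodesic makes them \emph{typical} samples from $G$, so once $s_k\ge 1$ (which happens for all but $O(1/h)$ of the columns) the shifted weights sum to roughly $n\,\E{}{\tau}$ (and may not even have finite mean, since the paper assumes no moments), whereas the geodesic's own horizontal weights were selected by the minimization to be atypically cheap: for non-deterministic $G$ one has $\frac1n\sum_k F_k(\gamma_k)\to \ell$ with $\ell<\E{}{\tau}$ in general (e.g.\ $G$ uniform on $[0,1]$, $v=0$). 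Hence $\Delta_n$ is of order $n$ with a strictly positive constant, and no ergodic averaging or second-moment estimate can remove it; your own remark that naive bounds give $\Delta_n\sim n$ is in fact the correct order. With $\Delta_n\asymp n$ your pathwise inequality only yields the trivial bound, not \eqref{ineq:upperdiff}.

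The paper's argument is designed precisely to avoid ever exchanging the geodesic's cheap horizontal weights for generic ones. In the reformulation $A^n(\gamma)=\sum_k V(\Delta_k\gamma)+\sum_k F_k(\gamma_k)$, the sheared functional $B^n(\omega)(\gamma)=\sum_k V(\Delta_k\gamma+\omega_{k+1})+\sum_k F_k(\gamma_k)$ keeps the \emph{same} horizontal weights and only perturbs the vertical cost; evaluating it at the unsheared geodesic gives exactly the pathwise term you want, $B^n(\omega)(\gamma)-A^n(\gamma)=\sum_k\1(\omega_{k+1}=1)\Delta V(\Delta_k\gamma)$, whose $\Omega(x)$-expectation is $x\,(U+R-D)(\gamma^n(v))$, with no error term $\Delta_n$ at all. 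The link to $\Lambda(v+x)$ is then made \emph{in distribution}, not pathwise: because the environment is i.i.d., shearing it is measure preserving, so $B^n_*(v,\omega)\deq T((0,0),(n,\vn+\sum_i\omega_i))$, and Hoeffding plus Borel--Cantelli and subadditivity give $\frac1n\E{\Omega}{B^n_*(v,\pm\omega)}\to\Lambda(v\pm x)$ almost surely (Lemma~\ref{Lem:asB}); convexity then converts $\Lambda(v+x)-\Lambda(v)\le x\liminf_n\frac1n(U+R-D)$ into \eqref{ineq:upperdiff}, and the mirrored computation with $\Xi_{-\omega}$ gives \eqref{ineq:lowerdiff}. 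If you want to salvage your write-up, replace the physical re-routing of the path by this comparison of the two functionals on the same path together with the distributional identity for the sheared minimum.
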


Since $U(\gamma^n(v))+R(\gamma^n(v))+D(\gamma^n(v))=n+1$, this again yields the bound from the Propositon~\ref{prop:diffbounds}, although it might not always be the case that the expression in \eqref{ineq:lowerdiff} is positive. If we can show that down-turns in geodesics occur with a linear ratio, then \eqref{ineq:upperdiff} tells us that $\partial^+\Lambda(v)<1$ which implies that the limit shape has no flat edge at $(0,1)$. 

In classical FPP the study of the local geometry of geodesics has recently caught attention. We refer here to an article by Jacquet \cite{pattern} who shows that 
any positive probability event only depending on the weights in a finite box will be observed along a geodesic a linear ratio of times with respect to the distance between the endpoints of the geodesic. In particular, this applies to down-turns. The main idea is to increase edge weights inside the box to force the geodesic into a specific shape. Unfortunately, this technique does not apply to our model because the vertical edge weights are static. So while we employ a different strategy to proof the main theorem, Theorem~\ref{Thm:diffbounds}  gives some intuition why it should hold.

However, a simple application of this result is that $\partial^+ \Lambda(0)<1$ if $G$ is not deterministic and hence the limit shape is not an $\ell^1$-diamond. This is because it is very unlikely for a geodesic to have a lot of consecutive right-turns, hence $R(\gamma)/n < 1$ and symmetry at $v=0$ yields $\E{}{D(\gamma)}=\E{}{U(\gamma)}$. 

A natural guess for an expression for the derivative based on Theorem~\ref{Thm:diffbounds} is $\Lambda'(v)= \lim_{n\to \infty} \E{}{U(\gamma^n(v))-D(\gamma^n(v))}/n$.
However, this equality is not true in general: If $G(0)>1/2$, we can move horizontally along zero-weight edges and move upwards if we encounter a larger weight. If $v$ is large enough, we thus find a path with minimal passage time $vn$ with large probability. So $\Lambda'(v)=1$, but $\E{}{R(\gamma^n)}\ge n/2$.

\begin{figure}[h]
\begin{center}
\begin{minipage}[c]{\textwidth}
\includegraphics[width=\textwidth]{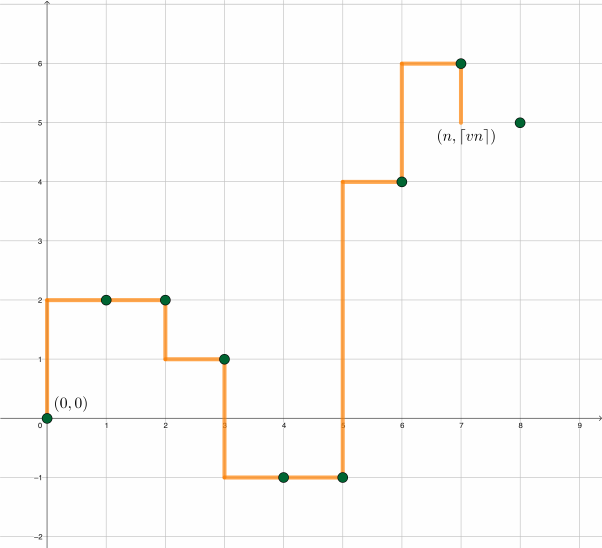}
\end{minipage}
\end{center}
\caption{A semi-directed path $p$ from the origin to $(n, \vn)$ with $n=7$ and $v=0.7$. Pioneer points are shown in green and $f_p=(0,2,2,1,-1,-1,4,6,5)$. }\label{Fig:pioneer} 
\end{figure}

\subsection{An equivalent model}\label{Sec:equiv_model}

We now rewrite our model in a way that resembles the polymers studied in \cite{polymer}.

For each semi-directed, non-intersecting path $p$ from the origin to $(n,\vn)$, define
\[
\gamma=\gamma(p)=(f_p(0),...,f_p(n+1)).
\]
See Figure~\ref{Fig:pioneer} for an example.  Moreover let $V(x)=|x|$ denote the absolute value, let $F_k(x)$ be the weight of the edge $((k-1),x),(k,x))$ and write $\Delta_k \gamma=\gamma_{k+1}-\gamma_k$. We rewrite the passage time as a sum of vertical edges and a sum of the horizontal edge weights:

\begin{equation}
\label{alt:time}
T(p)=A^n(\gamma):= \sum_{k=0}^n V(\Delta_k \gamma) + \sum_{k=1}^n F_k(\gamma_k).
\end{equation}
Then $\gamma(p)\in \Gamma^n(v)$ where

\[
\Gamma^n(v)=\{\gamma \in \Z^{n+2}: \, \gamma_0=0, \gamma_{n+1}=\lceil vn \rceil \}.
\]

Conversely, one can check that given $\gamma  \in \Gamma^{n}(v)$, there exists a unique semi-directed, non-intersectng path $p$ from the origin to $(n,\vn)$ such that $\gamma=\gamma(p)$. Thus, we also have
\begin{equation}
\label{altinf}
T((0,0),(n,\vn))=A^n_*(v):=\inf_{\gamma \in \Gamma^n(v)} A^n(\gamma)
\end{equation}
and almost surely and in expectation
\[
\Lambda(v)=\lim_{n \to \infty}  \frac 1 n A^n_*(v). 
\]

We say $\gamma \in \Gamma^n(v)$ is a geodesic if the infimum in \eqref{altinf} is attained. Clearly, a non-intersecting, semi-directed path $p$ is a geodesic in the original model if and only if $\gamma(p)$ is a geodesic. So existence and finiteness of geodesics (Proposition~\ref{Prop:geodesics}) also holds here. 

\subsection{Random shearing}

The main idea in \cite{polymer} is to shift the environment by so-called shear maps of the form $(\Xi_v \gamma)_k= \gamma_k + vk $. The obstacle is that our paths $\gamma$ live in $\Z$, so this map is not well-defined for all $v$ and $k$. We overcome this issue by introducing additional randomness.

For $x \in [0,1]$, let $\omega(x) \in \{0,1\}^\infty$ be a sequence of independent $Ber(x)$-distributed random variables, i.e.\  $\Prob{\omega_i=1}=x$ for $i=1,2,\dots$. Let $\Omega(x)$ denote the probability space we sample $\omega(x)$ from. 

For each $\omega \in \Omega(x)$ and path $\gamma \in \Z^{n+2}$, define the shear maps coordinate-wise for $k=0,1,\dots$
\[
(\Xi_{\pm \omega} \gamma)_k := \gamma_k \pm \sum_{i=1}^{k} \omega_i.
\]
The  inverse operator is given by $\Xi_\omega^{-1} = \Xi_{-\omega}$. Note that in expectation they behave analogously to the shear maps in \cite{polymer}:
\[
\E{}{(\Xi_\omega \gamma)_k}=\gamma_k + \sum_{i=1}^{k} \E{}{\omega_i}=\gamma_k + kx .
\]
Also define the shears on the random environment
\[
(\Xi^{*}_{\pm \omega} F)_k (x) = F_k( x \pm \sum_{i=1}^{k} \omega_i).
\]
By independence of the environment $F=\{F_k(x)\}$, we get that the shear maps are measure preserving for every $\omega$, in particular we have for any $\omega$ and jointly for all paths $\gamma$ and all $n$ 
\begin{equation}
\label{eq:measure_pres}
A^n(\gamma)(F)\deq A^n(\gamma)(\Xi^*_{\pm \omega}F). 
\end{equation}

We define the sheared passage time:
\[
B^n(\pm \omega)(\gamma)=\sum_{k=0}^n V(\Delta_k \gamma \pm \omega_{k+1}) + \sum_{k=1}^n F_k(\gamma_k),
\]
and consider the minimization problem:
\begin{equation}
\label{eq:bmin}
B^n_*(v,\pm \omega)=\inf \{ B^n(\pm \omega)( \gamma): \, \gamma\in \Gamma^n(v) \}. 
\end{equation}
In particular, if $x=0$, then $\omega\equiv 0$ and $B^n(\omega)=A^n$. Note that
\begin{equation}
\label{eq:ab}
B^n(\gamma)(F)=A^n(\Xi_{\omega}\gamma)(\Xi^*_{-\omega}F). 
\end{equation}

We conclude that we can obtain the time constant at different directions from the sheared passage times.

\begin{lemma}[Almost sure convergence of sheared passage times]\label{Lem:asB}
For any $x\in [0,1]$, let $\E{\Omega}{\,\cdot\,}$ denote the expectation with respect to $\Omega(x)$. Then almost surely
\[
\lim_{n \to \infty} \frac 1 n \E{\Omega}{B^n_*(v,\pm \omega)}=\Lambda(v\pm x).
\]
\end{lemma}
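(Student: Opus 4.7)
The plan is to rewrite $B^n_*(v,\omega)(F)$ as a passage time with a shifted endpoint in a sheared environment, derive a convenient $\omega$-uniform upper bound, establish joint almost-sure convergence in $\omega$ and $F$, and finish with dominated convergence in $\omega$; I treat the $+\omega$ case, as the $-\omega$ case is symmetric. Since $\Xi_\omega$ is a bijection from $\Gamma^n(v)$ onto the set of paths with $\gamma_0=0$ and $\gamma_{n+1}=\lceil vn\rceil+S_{n+1}(\omega)$, where $S_{n+1}(\omega):=\sum_{i=1}^{n+1}\omega_i$, relation \eqref{eq:ab} gives
\[
B^n_*(v,\omega)(F)\;=\;\inf_{\gamma_0=0,\,\gamma_{n+1}=\lceil vn\rceil+S_{n+1}(\omega)} A^n(\gamma)\bigl(\Xi^*_{-\omega}F\bigr).
\]
Moreover, evaluating $B^n(\omega)$ at any $A^n_*(v)$-optimizer and applying $V(a+b)\le V(a)+V(b)$ termwise yields the $\omega$-uniform domination $B^n_*(v,\omega)(F)\le A^n_*(v)(F)+S_{n+1}(\omega)$.

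Next I would establish the joint almost-sure convergence $B^n_*(v,\omega)(F)/n\to\Lambda(v+x)$. By the strong law of large numbers, $S_{n+1}(\omega)/n\to x$ for $\omega$ in a full-measure set. For each such $\omega$, measure-preservation \eqref{eq:measure_pres} gives $\Xi^*_{-\omega}F\deq F$; hence Theorem~\ref{Thm:time_const} applied to the sheared environment, combined with the Lipschitz bound $|T((0,0),(n,a))-T((0,0),(n,b))|\le|a-b|$ noted inside the proof of Proposition~\ref{prop:diffbounds}, yields $B^n_*(v,\omega)(F)/n\to\Lambda(v+x)$ for $F$ in a full-measure set that a priori depends on $\omega$. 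A Fubini argument then rephrases this as: for $F$ in a full-measure set, $B^n_*(v,\omega)(F)/n\to\Lambda(v+x)$ holds for $\omega$ almost surely.

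For such an $F$, Theorem~\ref{Thm:time_const} also furnishes $M(F):=\sup_n A^n_*(v)(F)/n<\infty$; combined with $S_{n+1}/n\le 1$ and the first-paragraph domination, this gives $B^n_*(v,\omega)(F)/n\le M(F)+1$ uniformly in $n$ and $\omega$. The dominated convergence theorem in $\omega$ then yields $\tfrac1n\E{\Omega}{B^n_*(v,\omega)(F)}\to\Lambda(v+x)$ for $F$ almost surely, which is the claim.

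The main subtlety lies in the Fubini step: the sheared environment $\Xi^*_{-\omega}F$ depends on $\omega$, so the good $F$-set on which pointwise convergence holds a priori depends on $\omega$, and the measure-preserving property \eqref{eq:measure_pres} is what allows one to transfer Theorem~\ref{Thm:time_const} to the sheared environment before rearranging the quantifiers into the form needed for DCT in $\omega$ with $F$ fixed.
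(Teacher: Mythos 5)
Your proof is correct, and it rests on the same key identity as the paper's: via \eqref{eq:ab} and the bijectivity of $\Xi_\omega$ on $\Gamma^n(v)$, the quantity $B^n_*(v,\omega)$ is recognized as a passage time to the shifted endpoint $\bigl(n,\lceil vn\rceil+\sum_{i=1}^{n+1}\omega_i\bigr)$, and the vertical Lipschitz bound relates this to $\Lambda(v\pm x)$. Where you diverge is in the probabilistic finish. The paper works from the joint distributional identity \eqref{distreq} and controls $\sum_{i=1}^{n+1}\omega_i$ quantitatively (Hoeffding plus Borel--Cantelli), then dominates the passage time by $A^n_*(v+x)+(n+1)^{2/3}$ via subadditivity; you instead fix $\omega$ on the SLLN event, use measure preservation of $\Xi^*_{-\omega}$ to apply Theorem~\ref{Thm:time_const} directly in the sheared environment, rearrange quantifiers by Fubini, and only then take the $\Omega$-expectation by dominated convergence using the bound $B^n_*(v,\omega)\le A^n_*(v)+\sum_{i=1}^{n+1}\omega_i$. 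Your route is softer (no concentration inequality is needed) and is more explicit than the paper about why the $\Omega$-expectation itself converges --- the paper's stochastic-domination step leaves that passage largely to the reader --- while the paper's quantitative estimate has the advantage of giving control jointly in $(\omega,F)$ without a Fubini rearrangement. One cosmetic slip: $\frac1n\sum_{i=1}^{n+1}\omega_i\le\frac{n+1}{n}$, not $\le 1$, so your uniform bound should read $M(F)+2$; this changes nothing in the argument.
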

\begin{proof}
By \eqref{eq:measure_pres} and \eqref{eq:ab}, the passage times $B^n(\omega)(\gamma)$ and $A^n(\Xi_\omega \gamma)$ have the same distribution jointly in $n$ and for any paths $\gamma$. The map $\Xi_\omega$ is a bijection from $\Gamma^n(v)$ to the set of semi-directed paths $\gamma$ with $\gamma_0=0$ and $\gamma_{n+1}=\vn+\sum_{i=1}^{n+1} \omega_i$. Hence, we can take the minimum over all paths and obtain that the following sequences of random variables have the same joint distribution
\begin{equation}
\label{distreq}
\left( B^n_*(v,\omega)\right)_{n\ge 1} \deq \left( T((0,0),(n,\vn+\sum_{i=1}^{n+1} \omega_i)) \right)_{n\ge 1}.
\end{equation}
Consider the event that the sum of the $\omega_i$ is close to its mean:
\[
\mathcal S_n := \{ \omega \in \Omega(x):\, (n+1)x-(n+1)^{2/3}\le \sum_{i=1}^{n+1} \omega_i \le (n+1)x+(n+1)^{2/3} \}. 
\]
By Hoeffding's lemma, $\Prob{\mathcal S_n^c}\le 2\exp(-2(n+1)^{1/3})$. Thus, almost surely the event $S_n$ occurs for sufficiently large $n$ by Borel-Cantelli. Under $\mathcal S_n$, the joint distribution on the right of \eqref{distreq} is stochastically dominated by $(A^n_*(v+x)+(n+1)^{2/3})_{n}$ by subadditivity. Since $(A^n_*(v+x)+ n^{2/3})/n$ converges to $\Lambda(x+v)$ almost surely, it is also an almost sure upper bound for the the limit superior of $B^n_*(v,\omega)/n$. The argument for the lower bound and the negative case is similar. 
\end{proof}

\subsection{The upper derivative}

The advantage of the sheared passage times is that the set of admissible paths $\Gamma^n(v)$ no longer depends on the direction. We will now use this fact to show \eqref{ineq:upperdiff}. For $x\in [0,1]$, sample $\omega$ from $\Omega(x)$.  For each $z \in \Z$ and $i \in \N$, we have the identity
\[
V(z+\omega_i)-V(z)=\1(\omega_i=1) \Delta V(z)
\]
where 
\begin{equation}
\label{eq:DeltaV}
\Delta V(z)=V(z+1)-V(z)=|z+1|-|z|=\begin{cases} +1& , \, z\ge 0 \\ -1& , \, z<0 \end{cases}
\end{equation}

is the discrete (right) derivative of the absolute value. It follows that for any path $\gamma \in \Gamma^n(v)$:

\begin{equation}
\label{eq:timediff}
\frac 1 n B^n(\omega)(\gamma) - \frac 1 n A^n(\gamma)=\frac 1 n \sum_{k=0}^n \1(\omega_{k+1}=1)\Delta V(\Delta_k \gamma).
\end{equation}

Now let $\gamma$ be the geodesic for the unsheared problem, we then have $A^n(\gamma)=A_*^n(v)$ and $B^n(\omega)(\gamma)\ge B_*^n(v,\omega)$. Hence, \eqref{eq:timediff} implies

\begin{equation*}
\frac 1 n B^n_*(v,\omega)- \frac 1 n A^n_*(v) \le \frac 1 n \sum_{k=0}^n \1(\omega_{k+1}=1)\Delta V(\Delta_k \gamma).
 \end{equation*}
 
 Since the shear maps are independent of the edge weight environment, taking the expectation with respect to $\Omega(x)$ yields
 \[
 \frac 1 n \E{\Omega}{B^n_*(v,\omega)}- \frac 1 n A^n_*(v) \le \frac x n \sum_{k=0}^n \Delta V(\Delta_k \gamma).
 \]
 We may rewrite the right-hand side using Definition~\ref{Def:urd} and \eqref{eq:DeltaV}. Taking the limit $n\to \infty$,  by Lemma~\ref{Lem:asB} we conclude that almost surely
 \[
 \Lambda(v+x)-\Lambda(v)\le x \liminf_{n\to \infty} \frac 1 {n} {U(\gamma^n(v))+R(\gamma^n(v))-D(\gamma^n(v))}.
 \]

\subsection{The lower derivative}

The lower bound calculation is very similar. We now have
\[
V(z)-V(z-\omega_i)=\1(\omega_i=1)\Delta V(z-1)=\begin{cases} +\1(\omega_i=1)& , \, z> 0 \\ -\1(\omega_i=1)& , \, z\le0. \end{cases}
\]

For any path $\gamma \in \Gamma^n(v)$ we have the identity
\[
\frac 1 n A^n(\gamma)- \frac 1 n B^n(-\omega)(\gamma)=\frac 1 n \sum_{k=0}^n \1(\omega_{k+1}=1) \Delta V(\Delta_k \gamma -1).
\]
Setting $\gamma$ to be the geodesic yields
\[
\frac 1 n A^n_*(v)-\frac 1 nB^n_*(v,-\omega)\ge \frac 1 n \sum_{k=0}^n \1(\omega_{k+1}=1)\Delta V(\Delta_k \gamma -1).
\]
Taking the expectation with respect to $\Omega(x)$ and the limit then yields that almost surely
\[
\Lambda(v)-\Lambda(v-x)\ge x \limsup_{n\to \infty} \frac 1 {n} {U(\gamma^n(v))-R(\gamma^n(v))-D(\gamma^n(v))}.
\]

\begin{remark}
There is great freedom in defining the shear map, although it is unclear if it will give improved results. If we want to ensure that for $\gamma \in \Gamma^n(v)$, the path $\Xi_\omega \gamma$ is always in $\Gamma^n(v+x)$, one can define weakly-dependent shear maps $\omega^n(x)=(\omega_1^n(x),\dots,\omega_n^n(x))$ as follows: Uniformly sample a permutation $\sigma_{n+1}$ of the integers $\{1,\dots,n+1\}$ and define
\begin{equation*}
\omega^{n,v}(x)_i:=\begin{cases} 1 &\text{ if } \sigma_{n+1}^{-1}(i)\le \vnx - \vn \\ 0 &\text{ if }  \sigma_{n+1}^{-1}(i)>\vnx - \vn  \end{cases}, \quad i\in\{1,...n+1\}.
\end{equation*}
Then $\sum_{i=1}^{n+1} \omega^{n,v}(x)_i=\vnx - \vn$ and $\E{}{\omega^{n,v}(x)_i}\sim x$ for large $n$. 
\end{remark}

\section{Classification of the limit shape}

This section deals with the proof of the \ref{Thm:classification}. We will work with different first passage models, so let us first introduce some notation. For a probability measure $G$, let $T^G(\cdot,\cdot)$ and $\Lambda^G(v)$ denote the passage time and the time constant in our undirected model with edges distributed according to $G$. 

For the directed SJ-model, we write $T^G_{dir}(\cdot,\cdot)$ and $\Lambda^G_{dir}(v)$. If the underlying distribution is clear, we omit the superscript. 

When $G$ is a $\{0,1\}$-Bernoulli distribution with $G(0)=p$, let $T^p_{FPP}(\cdot,\cdot)$ and $\Lambda^p_{FPP}(v)$ be the passage time and time constant with these Bernoulli weights in the original, undirected FPP model where both horizontal and vertical edges are randomized. 

Finally, we consider a model where the \emph{sites} of the $\Z^2$ lattice have i.i.d.\ Bernoulli weights and paths are required to be semi-directed. We refer to this model as the \emph{site percolation model} and the passage times are denoted by $T^p_{site}(\cdot,\cdot)$.

\subsection{Regularity of the passage time}

The following lemma explains why unlike the standard FPP models, the existence of the limit shape and many other properties hold in our model without moment assumptions. The main reason is that we can move around edges with high weights at a controlled cost.

\begin{definition}Let $G$ be non-deterministic and $B>t_{0}(G)$. For any edge $e$ of the lattice, we define the \emph{detour height} as the distance to an edge with small passage time:
\[
k_B(e):=\min\left (k\ge 1 | \,  \min( \tau(e+(0,k)),\tau(e-(0,k)))< B\right).
\]
The \emph{detour} of $e$ is the path that starts at one edge of $e$, goes $k_B(e)$ edges upwards or downwards, passes the horizontal edge with weight less than $B$ and goes $k_B(e)$ edges down back to the other endpoint of $e$. The \emph{detour time} $dt_B(e)$ is the passage time of the detour.
\end{definition}

\begin{lemma}
\label{Lem:expomoment}
Set $e_0:=((0,0),(1,0))$ and $X=dt_B(e)$ for some $B>t_0(G)$. Then there exists $\theta >0$ such that $\E{}{e^{\theta X}}<\infty$. \end{lemma}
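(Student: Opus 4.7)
The plan is to split the detour time $X = dt_B(e_0)$ into its vertical and horizontal contributions, reducing the claim to an exponential moment bound for the detour height $k_B(e_0)$, and then to exploit the geometric tail of $k_B(e_0)$ that follows directly from independence of the horizontal edge weights.

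First, I would unpack the definition of the detour. By construction it consists of $k_B(e_0)$ unit vertical edges, one horizontal edge of weight strictly less than $B$, and $k_B(e_0)$ further unit vertical edges. Since every vertical edge carries deterministic weight $1$, this yields the pathwise bound
\[
X \;=\; 2 k_B(e_0) + \tau^{*} \;\le\; 2 k_B(e_0) + B,
\]
where $\tau^{*}$ is the weight of the selected horizontal edge. Hence it suffices to exhibit an exponential moment for $k_B(e_0)$.

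Second, I would check that $p := G([0,B)) > 0$. Since $B > t_0(G) = \inf\{x : G([0,x]) > 0\}$, the definition of the infimum provides some $t_0 \le x < B$ with $G([0,x]) > 0$, and monotonicity of measure gives $G([0,B)) \ge G([0,x]) > 0$. Third, I would compute the tail of $k_B(e_0)$: the event $\{k_B(e_0) > k\}$ requires all $2k$ distinct horizontal edges $e_0 \pm (0,j)$, $1 \le j \le k$, to carry weight at least $B$, and by independence of the environment
\[
\Prob{k_B(e_0) > k} \;=\; (1-p)^{2k}.
\]
Choosing $\theta > 0$ with $e^{2\theta}(1-p)^{2} < 1$ and summing the resulting geometric series yields $\E{}{e^{2\theta\, k_B(e_0)}} < \infty$, whence
\[
\E{}{e^{\theta X}} \;\le\; e^{\theta B}\, \E{}{e^{2\theta\, k_B(e_0)}} \;<\; \infty.
\]

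The only mildly delicate step is the strict positivity of $p$, which relies on $t_0$ being an infimum rather than a minimum; all remaining steps are direct consequences of independence and the deterministic structure of vertical edges, so no substantive obstacle is anticipated.
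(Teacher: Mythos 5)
Your proof is correct and follows the same route as the paper: bound $X\le 2k_B(e_0)+B$, observe that by independence $k_B(e_0)$ has a geometric tail (with success probability $G([0,B))>0$ since $B>t_0$), and conclude finite exponential moments for small $\theta$. You simply spell out the geometric-tail computation and the positivity of $G([0,B))$ that the paper leaves implicit.
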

\begin{proof}
We have the bound $X\le 2k_B(e_0)+ B$. By independence of edge weights, $k_B(e)$ has a geometric distribution. So $X$ also has exponential tails and thus finite exponential moments.
\end{proof} 

As a consequence, passage times always have finite moments

\begin{corollary}
\label{cor:passagemoment}
For any $G$, there exists $\theta>0$ such that given $x_1,x_2,y_1,y_2 \in \Z$ and $T=T((x_1,y_1),(x_2,y_2))$ we have $\E{}{e^{\theta T}}<\infty$.
\end{corollary}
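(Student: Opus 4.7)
The plan is to produce a specific random path from $(x_1,y_1)$ to $(x_2,y_2)$ whose passage time is dominated by a sum of \emph{independent} copies of the detour time of Lemma~\ref{Lem:expomoment}, so that the standard product formula for the moment generating function of an independent sum yields the desired finite exponential moment.

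Without loss of generality assume $x_1\le x_2$; the deterministic case $G=\delta_c$ is immediate since $T$ is itself bounded by $|y_2-y_1|+c(x_2-x_1)$. So I would fix $B>t_0(G)$ and consider the random path that first travels vertically from $(x_1,y_1)$ to $(x_1,y_2)$ (contributing the deterministic amount $|y_2-y_1|$), then moves rightward along row $y_2$, replacing each horizontal edge $e_i:=((x_1+i-1,y_2),(x_1+i,y_2))$ by its detour. By definition this detour has passage time $dt_B(e_i)$, hence
\[
T((x_1,y_1),(x_2,y_2))\le |y_2-y_1|+\sum_{i=1}^{x_2-x_1} dt_B(e_i).
\]

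The key point to justify is that the summands are independent. Inspecting the definition of $k_B$, the variable $dt_B(e_i)$ is a deterministic function of the weights of the horizontal edges $e_i\pm(0,k)$, $k\ge 1$; each such edge joins $(x_1+i-1,y_2\pm k)$ to $(x_1+i,y_2\pm k)$, so lies entirely in the ``column slot'' between the $x$-coordinates $x_1+i-1$ and $x_1+i$ (the vertical edges used by the detour carry deterministic weight $1$ and contribute no randomness). These slots are disjoint for different $i$, and the horizontal environment is i.i.d., so the variables $dt_B(e_1),\dots,dt_B(e_{x_2-x_1})$ are independent and each is distributed as the variable $X$ of Lemma~\ref{Lem:expomoment}. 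Choosing $\theta>0$ as supplied by that lemma,
\[
\E{}{e^{\theta T}}\le e^{\theta|y_2-y_1|}\prod_{i=1}^{x_2-x_1}\E{}{e^{\theta dt_B(e_i)}}<\infty.
\]

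The case $x_1>x_2$ is handled symmetrically by going leftward instead. There is no real obstacle here: the only step requiring a moment's thought is the independence claim, which is immediate once one notices that the detours of distinct $e_i$ probe disjoint column slots of horizontal edges.
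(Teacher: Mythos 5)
Your proposal is correct and takes essentially the same route as the paper: the paper likewise bounds $T$ by $|y_2-y_1|$ plus the sum $\sum_i dt_B(e_i)$ of detour times along a single horizontal row (it uses row $y_1$ and appends the vertical segment at the end, whereas you go vertical first and detour along row $y_2$) and then invokes the independence of the summands together with Lemma~\ref{Lem:expomoment}. The disjoint-column-slot justification of independence and the separate treatment of deterministic $G$ that you spell out are exactly the details the paper leaves implicit.
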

\begin{proof}
For $i\in \{x_1,...,x_2-1\}$, set $e_i=((i, y_1),(i+1,y_1))$. Then $T\le dt_B(e_{x_1})+\dots + dt_B(e_{x_2-1})+|y_2-y_1|$. All these summands are independent. 
\end{proof}

We continue with a statement going back to Hammersley and Welsh \cite{hamwelsh}:

\begin{lemma}[Stochastic dominance]
\label{Lem:partial_order}
Let $F_1$ and $F_2$ be two distribution functions satisfying $F_1(x)\le F_2(x)$ for all $x\ge 0$. Then the associated time constant functions satisfy $\Lambda_2(v)\le \Lambda_1(v)$ for all $v$. 
\end{lemma}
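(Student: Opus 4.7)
The plan is to realize both models on a common probability space through a monotone coupling, and then use the fact that the passage time in our model is a coordinate-wise increasing function of the horizontal edge weights.

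First I would reformulate the hypothesis as a stochastic dominance statement. Since $F_1 \le F_2$ pointwise on $[0,\infty)$, if $X_i \sim F_i$ then $\Prob{X_1 > x} \ge \Prob{X_2 > x}$ for every $x$, i.e.\ $X_1$ stochastically dominates $X_2$. The standard way to make this pathwise is the quantile (inverse CDF) coupling: for each horizontal edge $e$ let $U_e$ be i.i.d.\ uniform on $[0,1]$, and set
\[
\tau_e^{(i)} := F_i^{-1}(U_e) := \inf\{x \ge 0 : F_i(x) \ge U_e\}, \qquad i=1,2.
\]
The marginals of $\tau_e^{(i)}$ are $F_i$, the families $\{\tau_e^{(i)}\}_e$ are i.i.d.\ in $e$ for each $i$, and the pointwise inequality $F_1 \le F_2$ forces the reverse inequality $F_1^{-1}(u) \ge F_2^{-1}(u)$ at every $u \in (0,1)$. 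Hence $\tau_e^{(1)} \ge \tau_e^{(2)}$ almost surely, simultaneously for all horizontal edges $e$.

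Next I would transfer this edge-wise inequality to passage times. The vertical edge weights are deterministically equal to $1$ in both models, so for every finite path $p$ we have $T^{F_1}(p) \ge T^{F_2}(p)$ almost surely on the coupling. Taking the infimum over all paths from $(0,0)$ to $(n,\vn)$ preserves the inequality, giving
\[
T^{F_1}\bigl((0,0),(n,\vn)\bigr) \ge T^{F_2}\bigl((0,0),(n,\vn)\bigr) \quad \text{a.s.}
\]
Dividing by $n$ and letting $n \to \infty$, Theorem~\ref{Thm:time_const} gives almost sure convergence of both sides on the coupling, and the inequality passes to the limit, so $\Lambda_1(v) \ge \Lambda_2(v)$ for every $v \ge 0$.

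There is essentially no obstacle here; the only subtlety is checking that the quantile coupling is jointly measurable across all edges and that the inequality $F_1^{-1} \ge F_2^{-1}$ is valid even at jumps of the CDFs (which follows directly from the $\inf$ definition, since the defining sets for $F_1^{-1}(u)$ are contained in those for $F_2^{-1}(u)$). Once the coupling is in place, the monotonicity of passage times is immediate because $T$ is a sum over an inf over paths of non-negative, edge-weight-monotone quantities.
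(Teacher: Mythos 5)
Your proposal is correct and follows essentially the same route as the paper: a monotone (quantile) coupling of the horizontal edge weights so that the $F_1$-weights dominate the $F_2$-weights edge by edge, after which the passage times and hence the time constants inherit the ordering; you simply spell out the coupling and the passage to the limit via Theorem~\ref{Thm:time_const} in more detail than the paper does.
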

\begin{proof}
For every horizontal edge, we can create a coupling such that the edge weight $\tau_1$ has $F_1$, the edge weight $\tau_2$ has distribution $F_2$ and $\tau_2\le \tau_1$. Hence, the passage times are ordered as well.
\end{proof}

The following lemmas deal with the continuity of the time constant under weak convergence but we only need it for some special cases. The results are known in classical FPP. 

\begin{lemma}[Continuity for Bernoulli distributions]
\label{Lem:cnt_ber}
Let $\Lambda^p$ be the time constant function with $\{0,1\}$-Bernoulli distribution (i.e.\ $G(0)=p$). Then for every $v$ pointwise $\lim_{p \to 0} \Lambda^p(v)=\Lambda^0(v)=v+1$. The same holds for $\Lambda_{dir}$.
\end{lemma}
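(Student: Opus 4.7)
The plan is to prove the lemma by squeezing $\Lambda^p(v)$ between $v+1$ from above and $v+1-\eps(p)$ from below, with $\eps(p)\to 0$. The upper bound is essentially free: when $G=\delta_1$ every edge has weight one, so every path from the origin to $(n,\vn)$ has passage time exactly $n+\vn$ and hence $\Lambda^0(v)=v+1$. Replacing each random horizontal weight by $1$ can only increase passage times (equivalently, Lemma~\ref{Lem:partial_order} applied to the CDFs $F_p(x)\ge F_{\delta_1}(x)$ yields this), so $\Lambda^p(v)\le v+1$, and the same argument restricted to directed paths gives $\Lambda^p_{dir}(v)\le v+1$.

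For the matching lower bound I would work in the equivalent model of Section~\ref{Sec:equiv_model}. Because horizontal weights are $\{0,1\}$-valued, for every $\gamma\in\Gamma^n(v)$,
\[
A^n(\gamma)=\sum_{k=0}^n V(\Delta_k\gamma)+n-Z(\gamma),\qquad Z(\gamma):=\sum_{k=1}^n\1(F_k(\gamma_k)=0).
\]
Telescoping gives $\sum_k V(\Delta_k\gamma)\ge \vn$, so $A^n(\gamma)\ge (v+1)n-Z(\gamma)$, and the only way $A^n(\gamma)$ can be substantially smaller than $(v+1)n$ is for $Z(\gamma)$ to be linearly large in $n$. Fix $\eps>0$, set $C:=v+1$, and split paths into two cases. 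If $\sum_k V(\Delta_k\gamma)>Cn$, then trivially $A^n(\gamma)\ge Cn=(v+1)n$, so such paths are harmless. Otherwise $\gamma$ encodes a semi-directed path of total length at most $(1+C)n$, hence the number of admissible $\gamma$ in this regime is at most $3^{(1+C)n}$.

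For each fixed $\gamma$, the indicators $\1(F_k(\gamma_k)=0)$ are i.i.d.\ Bernoulli$(p)$ (they concern horizontal edges in distinct columns), so $Z(\gamma)\sim\mathrm{Bin}(n,p)$, and Chernoff's inequality gives $\Prob{Z(\gamma)\ge \eps n}\le e^{-n I_p(\eps)}$, with $I_p(\eps)=\eps\log(\eps/p)+(1-\eps)\log((1-\eps)/(1-p))\to\infty$ as $p\to 0$ for fixed $\eps$. Choose $p_0=p_0(\eps,v)$ so that $I_p(\eps)>(1+C)\log 3$ for all $p<p_0$; a union bound together with Borel--Cantelli then gives that almost surely, for all large $n$, every $\gamma$ in the second case satisfies $Z(\gamma)<\eps n$. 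Combining the two cases yields $A^n_*(v)\ge (v+1-\eps)n$ eventually, and Theorem~\ref{Thm:time_const} delivers $\Lambda^p(v)\ge v+1-\eps$. Letting $\eps\to 0$ completes the undirected case; the directed version is identical but easier, since directed paths automatically have $\sum_k V(\Delta_k\gamma)=\vn$ (so the first case is vacuous) and the number of directed paths from the origin to $(n,\vn)$ is at most $\binom{n+\vn}{n}\le c_v^n$, requiring only $I_p(\eps)>\log c_v$.

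The main obstacle I expect is that $\Gamma^n(v)$ is a priori infinite, which would defeat a naive union bound. The case split resolves this: the trivial inequality $A^n(\gamma)\ge \sum_k V(\Delta_k\gamma)$ already handles paths of large vertical extent, leaving only an exponentially sized family to control, and the Chernoff rate $I_p(\eps)$ diverges as $p\to 0$ fast enough to beat the combinatorial entropy $(1+C)\log 3$.
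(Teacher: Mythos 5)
Your proof is correct, but it takes a genuinely different route from the paper. The paper's argument is a two-line squeeze: it observes $\Lambda^p_{FPP}(v)\le \Lambda^p(v)\le v+1$ (randomizing the vertical edges as well can only lower passage times, and $\delta_1$-weights give the trivial upper bound) and then invokes Cox's continuity theorem for classical Bernoulli FPP, which gives $\Lambda^p_{FPP}(v)\to v+1$ as $p\to 0$; the same sandwich handles $\Lambda_{dir}$. You instead prove the lower bound from scratch: writing $A^n(\gamma)\ge \vn+n-Z(\gamma)$ in the equivalent model, splitting off paths of vertical extent larger than $(v+1)n$ (which are trivially harmless and resolve the infinite-union-bound issue), counting the remaining paths by $3^{(1+C)n}$, and beating this entropy with the Chernoff rate $I_p(\eps)\to\infty$ as $p\to 0$, followed by Borel--Cantelli and Theorem~\ref{Thm:time_const}. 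Your version is longer but self-contained and elementary: it avoids the external input from \cite{cox} (and the comparison to the fully random FPP model), it yields an explicit threshold $p_0(\eps,v)$ together with exponential concentration below $(v+1-\eps)n$, and it treats the directed model by the same computation (where one could alternatively just read the limit off Theorem~\ref{Thm:exact}). The paper's version buys brevity and reuses known continuity results; yours buys quantitative control and independence from the classical FPP literature. The only points worth tightening are cosmetic: the path count should include the sum over lengths $\le(1+C)n$ (a harmless constant factor), and the Chernoff bound in the stated relative-entropy form requires $p<\eps$, which holds anyway in the limit you consider.
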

\begin{proof}
Note that we have
\[
\Lambda_{FPP}^p(v)\le \Lambda^p(v) \le v+1.
\]
By Theorem~1.14 in \cite{cox}, we know that $\Lambda_{FPP}^p(v)$ converges to $\Lambda_{FPP}^0(v)=v+1$, so the same must hold for our model and the SJ-model.\end{proof}

Showing this for arbitrary distributions is a bit harder, we will move its prove to Section~\ref{ss:cont}.

\begin{lemma}[Continuity for truncations]
\label{Lem:continuity}
For a distribution function $G(x)$ and $B>0$, let $G^B$ be the distribution obtained by truncating above at $B$, i.e.
\begin{align*}
G^B(x)=\begin{cases} G(x) \, \text{ if } x<B,\\ 1 \, \text{ if } x \ge B \end{cases}.
\end{align*}
And let $\Lambda$ and  $\Lambda^B$ be the associated time constant functions. Then $\Lambda^B$ converges to $\Lambda$ pointwise as $B \to \infty$. 
\end{lemma}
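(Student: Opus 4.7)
The plan is to first dispatch the easy direction. Stochastic dominance (Lemma~\ref{Lem:partial_order}) applied to the pointwise inequalities $G^{B_1}(x) \ge G^{B_2}(x) \ge G(x)$ for $B_1 \le B_2$ shows that $B \mapsto \Lambda^B(v)$ is non-decreasing and bounded above by $\Lambda(v)$. So the limit $\Lambda^\infty(v) := \lim_{B \to \infty} \Lambda^B(v)$ exists and satisfies $\Lambda^\infty(v) \le \Lambda(v)$; only the reverse inequality requires work. I would couple the two models via $\tau_e^B := \min(\tau_e, B)$ for each horizontal edge $e$, so that $T^B \le T$ pointwise on the coupling, and then prove
\[
\limsup_{n \to \infty} \frac{1}{n}\,\E{}{T((0,0),(n,\vn)) - T^B((0,0),(n,\vn))} \le c(B),
\]
with $c(B) \to 0$ as $B \to \infty$, which suffices to conclude.

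The main tool will be a detour modification. Fix $B_0 > t_0(G)$ so that $G([0,B_0)) > 0$ and the detour height $k_{B_0}(e)$ is almost surely finite, and take $B \ge B_0$. Let $\gamma^B$ denote the geodesic from the origin to $(n,\vn)$ in the $G^B$-environment, and call a horizontal edge $e \in \gamma^B$ \emph{heavy} if $\tau_e > B$. I would construct a path $\gamma'$ in the $G$-environment by keeping $\gamma^B$ intact on its non-heavy horizontal edges (which contribute $\tau_e = \tau_e^B$ identically in both environments) and its vertical edges, and replacing each heavy edge $e$ by its detour of height $k_{B_0}(e)$ through a neighboring horizontal edge of weight strictly less than $B_0$. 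Each such replacement incurs cost at most $2 k_{B_0}(e) + B_0$ in place of the $\tau_e^B = B$ contribution to $T^B(\gamma^B)$, and since $B_0 \le B$ this yields the master inequality
\[
T((0,0),(n,\vn)) \le T(\gamma') \le T^B(\gamma^B) + 2\!\sum_{\substack{e \in \gamma^B \\ \tau_e > B}}\!k_{B_0}(e).
\]
Writing $N_B(\gamma^B)$ for the number of heavy edges of $\gamma^B$, the problem reduces to establishing
\[
\E{}{\sum_{\substack{e \in \gamma^B \\ \tau_e > B}} k_{B_0}(e)} \le C \cdot \E{}{N_B(\gamma^B)},
\]
for a constant $C = C(B_0)$ independent of $B$ and $n$.

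Granting this bound, the argument is quickly finished: each heavy edge contributes weight $B$ to $T^B(\gamma^B)$, so $N_B(\gamma^B) \le T^B(\gamma^B)/B$ and hence $\E{}{N_B(\gamma^B)}/n \le \Lambda(v)/B + o(1)$; therefore $\Lambda(v) - \Lambda^B(v) \le 2C\Lambda(v)/B$, and sending $B \to \infty$ concludes. The main obstacle is thus verifying the second display. The difficulty is that $k_{B_0}(e)$, the event $\{e \in \gamma^B\}$, and the event $\{\tau_e > B\}$ are all measurable functions of the truncated environment $\{\tau_{e'}^B\}_{e'}$ (because $B_0 < B$) and share nontrivial dependencies. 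The two features that should make the estimate go through are that $k_{B_0}(e)$ is independent of $\tau_e$ itself (they involve disjoint edges) and has geometric tail by Lemma~\ref{Lem:expomoment}, and that on the event $\{\tau_e > B\}$ the coordinate $\tau_e^B$ is frozen at $B$ so the conditional distribution of the remaining environment is unchanged. Conditioning on $\{\tau_{e'}^B\}_{e' \neq e}$ and pairing a Cauchy--Schwarz estimate with the exponential moment bound on $k_{B_0}$ should furnish the required constant $C$.
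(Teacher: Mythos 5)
Your reduction to the key estimate is where the argument breaks. The inequality $\E{}{\sum_{e\in\gamma^B,\,\tau_e>B} k_{B_0}(e)} \le C\,\E{}{N_B(\gamma^B)}$ with $C=C(B_0)$ independent of $B$ is not merely hard to prove; it is false whenever $G$ has unbounded support, which is the only case that matters (for bounded support the lemma is trivial once $B$ exceeds the support). The obstruction comes from the very optimality of $\gamma^B$ in the truncated environment: if $\gamma^B$ uses a heavy edge $e$ (truncated weight $B$), then replacing $e$ by its detour through the nearest edge of weight $<B_0$ produces an admissible path whose truncated passage time is $T^B(\gamma^B)-B+2k_{B_0}(e)+w$ with $w<B_0$, so geodesicity forces $k_{B_0}(e)>(B-B_0)/2$ deterministically on the event $\{e\in\gamma^B,\ \tau_e> B\}$. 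Hence the left-hand side of your key estimate is at least $\tfrac{B-B_0}{2}\,\E{}{N_B(\gamma^B)}$, and $\E{}{N_B(\gamma^B)}>0$ for every $B$ when the support is unbounded (with positive probability a tall block of some column is entirely heavy and the geodesic is forced through it rather than paying a huge vertical detour). So the best possible constant grows at least linearly in $B$, and your concluding bound $\Lambda(v)-\Lambda^B(v)\le 2C\Lambda(v)/B$ degenerates to a bound of order $\Lambda(v)$ instead of $o(1)$. The same computation shows why the proposed repair (conditioning on $\{\tau^B_{e'}\}_{e'\neq e}$, independence of $k_{B_0}(e)$ from $\tau_e$, Cauchy--Schwarz) cannot succeed: the problematic dependence is not between $k_{B_0}(e)$ and $\tau_e$, but between $k_{B_0}(e)$ and the event $\{e\in\gamma^B\}$; both are measurable with respect to the truncated environment and are strongly positively correlated, since the geodesic only pays for a heavy edge when every nearby alternative in its column is expensive.

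Two changes are needed, and they are exactly what the paper's proof implements. First, detour only around $B$-heavy edges, i.e.\ use $k_B(e)$ (distance to the nearest edge of weight $<B$) rather than a fixed threshold $B_0$: the per-edge detour cost then has geometric tails governed by $q=G([B,\infty))$, which improves as $B\to\infty$, and the deterministic obstruction above disappears. Second, decouple the geodesic from the environment: the paper restricts to the high-probability event that $\gamma^B$ has at most $2\Lambda(v)n$ vertical edges and at most $\beta n$ heavy edges (Lemma~\ref{Lem:beta}, proved by comparison with a Bernoulli model via Lemma~\ref{Lem:cnt_ber}), and then takes a union bound over all such candidate paths (Corollary~\ref{Cor:countpaths}) and over all detour-height configurations (Lemma~\ref{Lem:pathcounting}), bounding each fixed path-and-configuration by roughly $q^S$ where $S$ is the total detour length. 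That union bound is the device that replaces the per-edge conditional estimate you are hoping for, and some such decoupling is unavoidable. Your monotonicity observation, the coupling, and the master inequality are fine; the gap is confined to, and fatal for, the key estimate.
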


We will also need some knowledge of large deviations of the passage time. Again, similar results are known for FPP and the proofs are found in Sections~\ref{ss:right_dev} to \ref{ss:left_dev}. Contrary to the original model however, no moment assumptions are required, which in parts is explained in Lemma~\ref{Lem:expomoment}.

\begin{theorem}[Large deviations, right tail]
\label{Thm:upper_deviation}
For each $\eps>0$ and $v_0\ge 0$, there exist constants $A,B$  such that for all $n\ge 1$ and $v\in [0,v_0]$:
\[
\Prob{T((0,0),(n,\vn))\ge n (\Lambda(v)+\eps)}\le A \exp(-Bn). 
\]
\end{theorem}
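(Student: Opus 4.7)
The plan is the standard subadditivity plus Chernoff argument, powered by the finite exponential moments of passage times from Corollary~\ref{cor:passagemoment}. Fix $v\in [0,v_0]$. By the $L^1$ convergence in Theorem~\ref{Thm:time_const} I choose an integer $m=m(v,\eps)$ with
\[\tfrac{1}{m}\E{}{T((0,0),(m,\lceil vm\rceil))}\le \Lambda(v)+\eps/8 \quad\text{and}\quad m\ge 8(1+v_0)/\eps.\]
For arbitrary $n$, write $n=km+r$ with $0\le r<m$.

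\textbf{Block decomposition and Chernoff bound.} Using subadditivity of $T$ along the horizontal direction, I construct a path from $(0,0)$ to $(n,\vn)$ by concatenating $k$ independent optimal pieces going from $(im,i\lceil vm\rceil)$ to $((i+1)m,(i+1)\lceil vm\rceil)$ for $i=0,\dots,k-1$, a horizontal remainder of length $r$, and a final vertical segment of length $|\vn-k\lceil vm\rceil|$. The pieces use disjoint collections of horizontal edges, so
\[T((0,0),(n,\vn))\le \sum_{i=1}^{k} T_i + T_{\mathrm{rem}} + \bigl|\vn-k\lceil vm\rceil\bigr|,\]
where $T_1,\dots,T_k$ are i.i.d.\ copies of $T((0,0),(m,\lceil vm\rceil))$ and $T_{\mathrm{rem}}$ is an independent copy of $T((0,0),(r,0))$. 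Since $k\le n/m$ and $|\vn-k\lceil vm\rceil|\le v_0 m+1+k$, the choice of $m$ makes the last two error terms at most $\eps n/4$ for large $n$, up to tolerating the exponentially small event $\{T_{\mathrm{rem}}\ge \eps n/8\}$. Because the $T_i$ have finite exponential moments by Corollary~\ref{cor:passagemoment}, Chernoff's inequality produces constants $c(v),C(v)>0$ with
\[\Prob{\sum_{i=1}^k T_i\ge km(\Lambda(v)+\eps/2)}\le C(v)e^{-c(v)k}.\]
Combining these estimates yields the pointwise bound $\Prob{T((0,0),(n,\vn))\ge n(\Lambda(v)+\eps)}\le A(v)e^{-B(v)n}$.

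\textbf{Uniformity in $v\in[0,v_0]$.} For uniformity I use the 1-Lipschitz property of $\Lambda$ from Proposition~\ref{prop:diffbounds} together with the deterministic estimate $|T((0,0),(n,\vn))-T((0,0),(n,\lceil wn\rceil))|\le |v-w|n+1$, obtained by appending a vertical connector. Covering $[0,v_0]$ by finitely many points $v_1,\dots,v_N$ spaced by $\eps/8$, the event $\{T((0,0),(n,\vn))\ge n(\Lambda(v)+\eps)\}$ for $v$ near some $v_j$ implies the analogous event for $v_j$ with $\eps$ replaced by $\eps/2$, and a union bound over $N$ directions preserves exponential decay in $n$. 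The main obstacle is the $O(k)$ vertical correction coming from $|\vn-k\lceil vm\rceil|$: it must not swamp the Chernoff gain, which is why $m$ has to be taken large in advance and why the $L^1$ convergence in Theorem~\ref{Thm:time_const}, rather than merely almost sure convergence, is essential here.
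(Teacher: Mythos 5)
Your proof is correct and follows essentially the same Grimmett--Kesten block argument as the paper: a subadditive decomposition into i.i.d.\ column-strip blocks, a Chernoff bound powered by the exponential moments from Lemma~\ref{Lem:expomoment}/Corollary~\ref{cor:passagemoment}, and uniformity in $v$ via the deterministic bound $|T((0,0),(n,\vn))-T((0,0),(n,\lceil wn\rceil))|\le|v-w|n+1$ together with a finite net of directions. The only cosmetic difference is the order of steps: the paper fixes one block length $N$ uniformly over the net of directions before applying the Chernoff bound, whereas you first prove the pointwise estimate with a $v$-dependent block length $m$ and restore uniformity at the end.
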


This estimate also gives us a tail bound on the number of edges in a geodesic:

\begin{corollary}[Length of geodesics]
\label{Cor:length}
For each $\eps>0$, there exists constants $A,B$ depending on $\eps$ such that for all $n\ge 1$ and $v$:
\[
\Prob{|\gamma^n(v)|\ge n (\Lambda(v)+1+\eps)}\le A \exp(-Bn). 
\]
\end{corollary}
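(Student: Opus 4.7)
The plan is to reduce Corollary~\ref{Cor:length} to the upper-tail estimate of Theorem~\ref{Thm:upper_deviation} by observing that the length of a geodesic is controlled by its passage time. Since $\gamma^n(v)$ can be chosen to be semi-directed and non-intersecting, it uses exactly $n$ horizontal edges; writing $V$ for the number of vertical edges traversed by $\gamma^n(v)$, we have the identity $|\gamma^n(v)| = n + V$. Because every vertical edge carries weight $1$ and horizontal weights are non-negative, the passage time of the geodesic satisfies $T((0,0),(n,\vn)) \ge V$, which yields the deterministic inequality
\[
|\gamma^n(v)| \le n + T((0,0),(n,\vn)).
\]
Consequently, the event $\{|\gamma^n(v)| \ge n(\Lambda(v)+1+\eps)\}$ is contained in $\{T((0,0),(n,\vn)) \ge n(\Lambda(v)+\eps)\}$, and the desired exponential bound follows directly from Theorem~\ref{Thm:upper_deviation} applied with the same $\eps$.

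The main obstacle is checking uniformity of the constants in $v$, since the constants produced by Theorem~\ref{Thm:upper_deviation} depend on a range bound $v_0$. For $v\in[0,v_0]$ the conclusion is immediate. To obtain constants independent of $v$, I would complement this with a separate argument for $v \ge v_0$ based on the upper-bound path that goes right from the origin to $(n, 0)$ using the detours of Lemma~\ref{Lem:expomoment} and then straight up to $(n, \vn)$, producing $T((0,0),(n,\vn)) \le \vn + \sum_{i=1}^n dt_B(e_i)$. A Chernoff bound on the i.i.d.\ sum of detour times together with the trivial lower bound $\Lambda(v) \ge v$ then delivers the exponential decay in the large-direction regime, closing the gap.
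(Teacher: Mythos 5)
Your first paragraph is exactly the paper's proof: $|\gamma^n(v)|-n$ is the number of vertical edges, each of weight $1$, so the event $\{|\gamma^n(v)|\ge n(\Lambda(v)+1+\eps)\}$ forces $T((0,0),(n,\vn))\ge n(\Lambda(v)+\eps)$, and Theorem~\ref{Thm:upper_deviation} finishes. That part is correct and needs no change; the paper's own proof is no more detailed than this (and, like yours, it only invokes Theorem~\ref{Thm:upper_deviation}, whose constants are uniform only over $v\in[0,v_0]$).

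The issue is your proposed patch for the large-$v$ regime, which does not work as stated. The detour of an edge always uses at least two vertical steps, so $dt_B(e)\ge 2+t_0$ deterministically and $\E{}{dt_B(e)}\ge 2+t_0$. Your reduction (using $T\le \vn+\sum_{i=1}^n dt_B(e_i)$ and $\Lambda(v)\ge v$, or even $\Lambda(v)\ge v+t_0$) turns the event into $\sum_{i=1}^n dt_B(e_i)\ge n(t_0+\eps)-1$, which for $\eps<2$ is a \emph{typical} event, not a rare one: the sum concentrates around $n\,\E{}{dt_B(e_0)}\ge n(t_0+2)$, so no Chernoff bound can give exponential decay there. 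The straight-right-with-detours path is simply too expensive per horizontal step to compete with $n(\Lambda(v)+\eps)$ when $\Lambda(v)$ is close to the asymptote $v+t_0$; a genuine large-$v$ argument would have to use a path that trades vertical steps for cheap horizontal edges, e.g.\ the Bernoulli comparison behind Proposition~\ref{Prop:asymp} and Theorem~\ref{Thm:exact} (a greedy path that steps right on edges of weight at most $t_0+\delta$ and up otherwise, whose excess vertical usage beyond $\vn$ has exponentially small probability once $v$ exceeds a threshold depending on $\delta$). Alternatively, one can simply state the corollary with constants depending on a range $[0,v_0]$, which is all that Theorem~\ref{Thm:upper_deviation} (and hence the paper's own one-line proof) delivers.
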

\begin{proof}
Simply note that $|\gamma^n(v)|-n$ equals the number of vertical edges of $\gamma^n(v)$, which have weight $1$. Hence, the event $|\gamma^n(v)|\ge n (\Lambda(v)+1+\eps)$ implies the event of Theorem~\ref{Thm:upper_deviation}.
\end{proof}

\begin{theorem}[Large deviations, left tail]
\label{Thm:lower_deviation_dir}
 For each $\eps>0$ and $v_0\ge 0$, there exist constants $A,B$ such that for all $n\ge 1$ and $v\in [0,v_0]$:
\[
\Prob{T((0,0),(n,\vn))\le n (\Lambda(v)-\eps)}\le A \exp(-Bn). 
\]
The same holds for the SJ model when $G$ corresponds to a Bernoulli distribution.
\end{theorem}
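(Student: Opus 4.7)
The overall strategy mirrors the classical Kesten template for lower-tail large deviations in first passage percolation: reduce to a bounded distribution, obtain exponential concentration of $T$ around its mean, and combine with $\E{}{T}/n \to \Lambda(v)$.

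First I truncate. By the obvious coupling $\tau^{G^B}_e = \min(\tau^G_e, B)$, we have $T^{G^B} \leq T^G$ pointwise, so it suffices to bound $\Prob{T^{G^B} \leq n(\Lambda^G(v) - \eps)}$. By Lemma~\ref{Lem:continuity}, $\Lambda^{G^B}(v) \to \Lambda^G(v)$ as $B \to \infty$, and since $\Lambda^{G^B}(v)$ is monotone in $B$ (via Lemma~\ref{Lem:partial_order}) while each $\Lambda^{G^B}$ and $\Lambda^G$ is $1$-Lipschitz in $v$ (Proposition~\ref{prop:diffbounds}), Dini's theorem gives uniform convergence on the compact set $[0,v_0]$. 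Picking $B$ large enough that $\Lambda^{G^B}(v) \geq \Lambda^G(v) - \eps/2$ uniformly on $[0,v_0]$, the problem reduces to proving the large deviation bound for $T^{G^B}$ at level $\Lambda^{G^B}(v) - \eps/2$.

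For the bounded distribution $G^B$, I aim for the exponential concentration
\[
\Prob{T^{G^B}((0,0),(n,\vn)) \leq \E{}{T^{G^B}} - n\eps/4} \leq A e^{-Bn},
\]
which combined with the $L^1$-convergence $\E{}{T^{G^B}}/n \to \Lambda^{G^B}(v)$ from Theorem~\ref{Thm:time_const} yields the desired left-tail bound. Although $T$ nominally depends on $\Omega(n^2)$ horizontal edge weights in the ambient box, any realisation certifies its value via a single geodesic, which by Corollary~\ref{Cor:length} has length $O(n)$ with exponentially high probability. I would invoke Talagrand's convex distance inequality in the certifiable-Lipschitz form Kesten adapted for FPP: for $T$ with bounded differences at most $B$ and certifiable by a configuration of effective size $c$, one obtains concentration of order $\exp(-t^2/(c B^2))$; substituting $\E{}{c} = O(n)$ and $t = n\eps/4$ yields exponential decay in $n$.

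Uniformity in $v \in [0,v_0]$ follows from a finite $\delta$-net together with the almost sure bound $|T((0,0),(n,\vn)) - T((0,0),(n,\wn))| \leq |\vn - \wn|$ (from the proof of Proposition~\ref{prop:diffbounds}) and the Lipschitz continuity of $\Lambda$. The directed SJ model with Bernoulli weights is handled identically: the passage time equals $\vn$ plus the minimum weight of a directed LPP path with Bernoulli horizontal weights, and the same Talagrand-type concentration applies with geodesic length exactly $n+\vn$. The main obstacle is the concentration step — a naive McDiarmid estimate would give only $\exp(-c\eps^2/n)$ since $T$ is Lipschitz in all $\Omega(n^2)$ coordinates of the box; it is essential to exploit that geodesics use only $O(n)$ edges, which is precisely what Talagrand's convex distance inequality (or an entropy/log-Sobolev method) accomplishes.
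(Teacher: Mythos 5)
Your proposal is correct and follows the same overall skeleton as the paper---reduce to bounded support via the truncation coupling and Lemma~\ref{Lem:continuity} (your Dini/Lipschitz argument for uniformity of the choice of $B$ on $[0,v_0]$ is fine, indeed more explicit than the paper's), then prove exponential concentration of the passage time for the truncated weights, then combine with uniform convergence of $\E{}{T((0,0),(n,\vn))}/n$ to $\Lambda(v)$---but the concentration step uses a different tool. The paper proves a standalone concentration bound (Theorem~\ref{Thm:concentration}) by entropy tensorization, the symmetrized log-Sobolev inequality and the Herbst argument, valid for any $G$ with finite variance at scale $\sqrt{x_1}$, and only afterwards truncates to remove the variance assumption; you truncate first and invoke Talagrand's convex-distance/certificate inequality, which needs boundedness but then yields the required $e^{-c\eps^2 n/B^2}$ lower-tail bound directly. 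Both routes exploit the same model-specific fact: only horizontal edges are random and a semi-directed geodesic traverses exactly $n$ of them, so the certifying set (resp.\ the entropy sum) has size of order $n$ rather than $n^2$. Two points to tighten: Talagrand's inequality requires a deterministic bound on the size of the certifying set for configurations in the event $\{T\le r\}$ (or an intersection with a high-probability event), not a bound on its expectation as you wrote; here no appeal to Corollary~\ref{Cor:length} is needed at all, since vertical edges carry no randomness and the certificate consists of exactly $n$ random coordinates. Also, Talagrand concentrates around the median, so you should convert median to mean (their gap is $O(B\sqrt{n})$, absorbed by the $\eps n/4$ slack). With these cosmetic fixes your argument is a valid alternative to the paper's entropy proof, and it covers the Bernoulli SJ model verbatim since the directed geodesic likewise uses exactly $n$ bounded random horizontal weights.
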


For the site percolation model, we will need a combination of  continuity and large deviation results, to be proven in Section~\ref{ss:spm}. 

\begin{lemma}[Site percolation]
\label{Lem:spm}
For every $\eps>0$ and $v\in \R$, there exists constants $A,B>0$ and $p_\eps>0$ such that for all $p<p_\eps$ we have
\[
\Prob{T_{site}^p((0,0),(n,\vn))\le n (v+1-\eps)}\le A \exp(-Bn).
\]
\end{lemma}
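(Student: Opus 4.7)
The strategy is a direct union bound over semi-directed paths, combining a Stirling count of paths with the elementary binomial tail estimate. By the symmetry $T_{site}^p((0,0),(n,\vn)) \deq T_{site}^p((0,0),(n,-\vn))$, the case $v<0$ reduces to the case $|v|$, so I assume $v\ge 0$. I also restrict to non-intersecting semi-directed paths, since erasing loops does not increase the passage time in the site model.

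For any non-intersecting semi-directed path $\gamma$ with $|\gamma|=L$ edges, $\gamma$ visits exactly $L+1$ distinct sites, so $T_{site}^p(\gamma)=L+1-Z(\gamma)$ with $Z(\gamma)\sim\mathrm{Bin}(L+1,p)$. Hence
\[
\Prob{T_{site}^p((0,0),(n,\vn))\le n(v+1-\eps)} \le \sum_{L\ge (1+v)n} N(L)\,\Prob{Z(\gamma)\ge L+1-n(v+1-\eps)},
\]
where $N(L)$ is the number of such paths. Writing the path as $n$ right-steps plus $u$ up-steps plus $d$ down-steps with $u-d=\vn$ and $u+d=L-n$, one has $N(L)\le \binom{L}{n}\binom{L-n}{u}$.

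Parametrize $L=(1+v+2\alpha)n$; Stirling gives $\log N(L) \le n\,f(\alpha)+o(n)$ with
\[
f(\alpha)=(1+v+2\alpha)\log(1+v+2\alpha)-(v+\alpha)\log(v+\alpha)-\alpha\log\alpha,
\]
so $f(0)=(1+v)\log(1+v)-v\log v$ and $f(\alpha)\sim 2\alpha\log 2$ as $\alpha\to\infty$. The deficit in $Z$ is at least $n(\eps+2\alpha)$, and the elementary bound $\Prob{\mathrm{Bin}(N,p)\ge k}\le (Nep/k)^k$ gives
\[
\Prob{Z(\gamma)\ge n(\eps+2\alpha)} \le \left(\frac{(1+v+2\alpha)\,e\,p}{\eps+2\alpha}\right)^{n(\eps+2\alpha)}.
\]
Combining, the whole probability is bounded by $\sum_{\alpha\in\{0,1/n,2/n,\dots\}}\exp(n\,g_p(\alpha))$ where
\[
g_p(\alpha):=f(\alpha)+(\eps+2\alpha)\log\!\left(\frac{(1+v+2\alpha)\,e\,p}{\eps+2\alpha}\right).
\]

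The final step is to select $p_\eps$ so that $\sup_{\alpha\ge 0}g_p(\alpha)\le -B<0$ uniformly in $p<p_\eps$. At $\alpha=0$ one has $g_p(0)=(1+v)\log(1+v)-v\log v+\eps\log((1+v)ep/\eps)\to -\infty$ as $p\to 0$, which takes care of the shortest paths. For $\alpha\to\infty$, $g_p(\alpha)\sim 2\alpha\log(2ep)\to -\infty$ linearly as soon as $p<1/(2e)$, so the sum is geometrically convergent and dominated by its $\alpha\approx 0$ piece. Continuity and compactness handle the intermediate range of $\alpha$. The main obstacle is exactly this uniform control over $\alpha$: the entropy of semi-directed paths grows linearly in the length, and one needs the $\log(1/p)$ prefactor in the binomial tail exponent to outpace it once $p$ is small enough relative to $\eps$ and $v$. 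The parity constraint $L-n\equiv \vn \pmod 2$ only doubles the count of paths and is absorbed into constants.
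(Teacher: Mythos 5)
Your proof is correct, but it takes a genuinely different route from the paper. The paper does not argue by path counting at all: it couples the site model to an auxiliary \emph{edge} model by giving each edge the minimum of the weights of its two endpoints, observes that the resulting Bernoulli edge field (with parameter $(1-p)^2$ for weight $1$) is only finitely dependent, invokes the Liggett--Schonmann--Stacey domination-by-product-measure theorem to compare with genuine i.i.d.\ Bernoulli FPP at some $p^*(p)\to 0$, and then quotes the continuity of the classical time constant (Cox) together with Kesten's left-tail large deviation estimate. Your argument is instead a self-contained first-moment (Peierls-type) bound: enumerate semi-directed paths by their number of down-steps, bound the number of zero-weight sites on a fixed path by a binomial tail, and check that the entropy $f(\alpha)$ is beaten by the $(\eps+2\alpha)\log p$ term uniformly in $\alpha$ once $p$ is small. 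That uniformity is the only spot where you wave your hands ("continuity and compactness"); it does hold, and the clean way to say it is that on any compact range $[0,A]$ one has $(\eps+2\alpha)\log p\le \eps\log p\to-\infty$ while the remaining terms of $g_p$ are bounded, and for $\alpha\ge A$ (with $A$ depending on $v,\eps$) one has $g_p(\alpha)\le c\,\alpha\,(1+\log(2ep))<0$ linearly once $p$ is below a universal constant; the resulting polynomial-in-$n$ number of small-$\alpha$ terms and geometric tail are absorbed into $A,B$. The trade-off: your proof is elementary, avoids the external inputs (LSS domination, Cox continuity, Kesten large deviations) and produces an explicit $p_\eps$, at the cost of a longer combinatorial computation; the paper's proof is shorter because it outsources all the work to standard FPP machinery. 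One cosmetic remark: the parity constraint $L-n\equiv\vn \pmod 2$ \emph{restricts} the admissible lengths rather than doubling the count, so it can simply be ignored in the upper bound.
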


\subsection{ Bounds on the limit shape} 

Sepp\"{a}l\"{a}inen's main result can be reformulated as the following:

\begin{theorem}
\label{Thm:exact}
Let $p\in [0,1]$ and $\kappa > \lambda \ge 0$. Let each edge $e$ be independently distributed according to
\[ 
\Prob{ e = \lambda}=p, \, \Prob{e=\kappa}=q:=1-p.
\]
Then
\begin{equation*}
\Lambda_{dir}(v)=\begin{cases} \lambda + v \text{ if } v>q/p \\ \lambda+v+(\kappa-\lambda)(\sqrt{q}-\sqrt{pv})^2 \text{ if } v \le q/p  \end{cases}
\end{equation*}
\end{theorem}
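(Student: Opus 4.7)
My plan is to reduce the statement to Seppäläinen's shape theorem for Bernoulli last-passage percolation and then read off the claimed closed form. Every up-right path from $(0,0)$ to $(n,\vn)$ uses exactly $n$ horizontal edges (of random weight) and $\vn$ vertical edges (each of weight $1$). Shifting every horizontal weight by $-\lambda$ decreases each path sum by exactly $n\lambda$, so if $H(v)$ denotes the horizontal-edge part of the time constant in the rescaled model with $\{0,1\}$-valued horizontal weights and $\mathbb P(\tau=0)=p$, then
\[
\Lambda_{dir}(v) \;=\; \lambda + v + (\kappa-\lambda)\,H(v).
\]
A sanity check at $v=0$ (pure horizontal travel gives mean weight $\lambda+(\kappa-\lambda)q$, i.e.\ $H(0)=q$) and at $v=q/p$ matches the statement, so it suffices to prove $H(v)=(\sqrt q - \sqrt{pv})^2$ for $v\le q/p$ and $H(v)=0$ for $v\ge q/p$.

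For $\{0,1\}$-valued horizontal weights, minimizing the sum over the $n$ horizontal edges is the same as maximizing the number of zero-weight edges traversed. Setting $Y_{i,j}=\1(\tau(i,j)=0)$ gives i.i.d.\ Bernoulli$(p)$ indicators, and
\[
M(n,m) \;=\; \max_{\pi: (0,0)\to (n,m)} \sum_{(i,j) \in \pi} Y_{i,j}
\]
is the standard Bernoulli last-passage time over up-right paths. Then $H(v) = 1 - \mu(v)$ with $\mu(v) = \lim_{n\to\infty} M(n,\vn)/n$, and the theorem becomes the classical identity $\mu(v) = 1 - (\sqrt q - \sqrt{pv})^2$ for $v\le q/p$ and $\mu(v) = 1$ for $v\ge q/p$.

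The main obstacle, and essentially the only nontrivial step, is computing $\mu$ in closed form; this is the content of Seppäläinen's original paper and the route I would follow. One identifies the growth interface $(n,m)\mapsto M(n,m)$ with the height function of a discrete-time totally asymmetric simple exclusion process in which a zero horizontal edge encodes a successful particle jump at rate $p$ and a unit edge encodes a blocked jump. Under hydrodynamic scaling the macroscopic profile satisfies an inviscid Hamilton--Jacobi equation, and for step initial data the entropy solution is exactly the square-root rarefaction fan above; the critical slope $v=q/p$ is the direction transversal to the shock, beyond which the path can be routed entirely through zero-weight edges. The exact constants come from the product-form stationary distribution of the exclusion process via a Burke-type coupling computation. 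Any softer subadditive or concentration argument reproduces the correct scaling and monotonicity of $\mu$ but not the precise $(\sqrt q - \sqrt{pv})^2$ expression, which relies essentially on the integrability of TASEP.
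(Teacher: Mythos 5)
Your reduction is correct and is essentially the route the paper takes: the paper gives no proof of this theorem at all, presenting it precisely as a reformulation of Sepp\"al\"ainen's shape theorem, and your affine shift $\tau \mapsto \lambda + (\kappa-\lambda)\1(\tau=\kappa)$ together with the passage from minimizing $\kappa$-edges to Bernoulli last-passage maximization is exactly the content of that reformulation. The remaining step (the closed form $1-(\sqrt q-\sqrt{pv})^2$ via the TASEP/hydrodynamic argument) is, as you say, Sepp\"al\"ainen's theorem itself, so citing it rather than reproving it matches the paper.
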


We can derive from this the limiting behavior of the time constant in our model.

\begin{proposition}
\label{Prop:asymp}
The line $v+t_0$ is an asymptote of the time constant function, i.e.\ $\lim_{v \to \infty} \Lambda(v)-v-t_0=0$. If $G$ have an atom at $t_0(G)$, then we even have $\Lambda(v)=v+t_0$ for all $v\ge  (1-G(t_0))/G(t_0)$. 
\end{proposition}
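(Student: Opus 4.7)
The plan is to sandwich $\Lambda(v)$ between $v+t_0$ (from below) and $v+t_0+\eps$ (from above, for $v$ large enough) by combining Sepp\"al\"ainen's explicit formula (Theorem~\ref{Thm:exact}) with stochastic dominance by a cleverly chosen two-point distribution.

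\emph{Lower bound.} A semi-directed geodesic from $(0,0)$ to $(n,\vn)$ exists by the first proposition; it uses exactly $n$ horizontal edges (since no leftward moves are allowed) and at least $\vn$ vertical edges (if $U$ and $D$ denote the up- and down-counts, then $U-D=\vn$ forces $U+D \ge \vn$). Since horizontal weights are $\ge t_0$ and vertical weights equal $1$, we obtain $T((0,0),(n,\vn)) \ge n t_0 + \vn$, hence $\Lambda(v) \ge v + t_0$ after dividing by $n$ and taking the limit.

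\emph{Upper bound.} Fix $\eps \ge 0$ such that $p := G([0,t_0+\eps]) > 0$; this is automatic for $\eps>0$ by the definition of $t_0$, and also holds for $\eps=0$ when $G$ has an atom at $t_0$. For any $B > t_0+\eps$ large enough that $G([0,B])$ approaches $1$, let $G^B$ be the truncation from Lemma~\ref{Lem:continuity}, and let $G'$ be the two-point distribution with mass $p$ at $t_0+\eps$ and mass $q:=1-p$ at $B$. A direct case check on $x \in [0,t_0+\eps)$, $[t_0+\eps, B)$, and $[B,\infty)$ confirms $G'(x) \le G^B(x)$ for all $x \ge 0$, so by Lemma~\ref{Lem:partial_order} and because our undirected model admits more paths than the SJ-model,
\[
\Lambda^{G^B}(v) \le \Lambda^{G'}(v) \le \Lambda^{G'}_{dir}(v).
\]
Applying Theorem~\ref{Thm:exact} with $\lambda = t_0 + \eps$ and $\kappa = B$, this last quantity equals $t_0 + \eps + v$ whenever $v \ge q/p$ (the boundary $v = q/p$ works because $\sqrt{q}-\sqrt{pv}$ vanishes there, making both cases of the formula agree). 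Since the threshold $q/p$ is independent of $B$, letting $B \to \infty$ and invoking Lemma~\ref{Lem:continuity} yields $\Lambda(v) \le v + t_0 + \eps$ for every $v \ge q/p$.

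\emph{Conclusion.} In the atom case, apply the upper bound with $\eps = 0$ and $p = G(\{t_0\})$: combined with the lower bound, $\Lambda(v) = v + t_0$ for all $v \ge (1-G(t_0))/G(t_0)$. For the general asymptotic statement, fix any $\eta > 0$ and take $\eps = \eta$; the upper bound then gives $0 \le \Lambda(v) - v - t_0 \le \eta$ for all $v \ge q(\eta)/p(\eta)$, so $\Lambda(v) - v - t_0 \to 0$ as $v \to \infty$. The only technical subtlety is the possible unboundedness of $G$, which prevents a direct two-point domination of $G$ itself; this is precisely why the truncation and subsequent $B \to \infty$ limit via Lemma~\ref{Lem:continuity} are needed. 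Everything else is a mechanical assembly of the stated lemmas and Sepp\"al\"ainen's formula.
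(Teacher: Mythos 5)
Your proof is correct and follows essentially the same route as the paper: the trivial lower bound $\Lambda(v)\ge v+t_0$, truncation at $B$, stochastic domination by the two-point distribution at $\{t_0+\eps,B\}$ combined with Sepp\"al\"ainen's formula for the directed model, and Lemma~\ref{Lem:continuity} to remove the truncation. The only (harmless) difference is bookkeeping of the limits: you fix $v\ge q/p$ and let $B\to\infty$ directly, using that the threshold $q/p$ does not depend on $B$, whereas the paper notes the bound is uniform in $B$ and invokes the Moore--Osgood theorem.
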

\begin{proof}
Let $B>t_0+1$. We first prove the proposition for the truncated passage time $\Lambda^B(v)$. For $\eps\in(0,1)$, set $p=G(t_0+\eps)$. Let $G_p$ be the Bernoulli distribution
\[
\Prob{ e=t_0+\eps}=p, \, \Prob{e=B}=1-p
\]
Then for $v>(1-p)/p$ we have by Theorem~\ref{Thm:exact},
\[
\Lambda^B(v)\le \Lambda^{G_p}(v) \le \Lambda^{G_p}_{dir}(v)\le t_0+\eps+v. 
\]
Since we also have the trivial lower bound $\Lambda(v)\ge t_0+v$, we conclude that  $\Lambda^B(v)-v-t_0$ converges to zero uniformly in $B$ as $v\to \infty$. By Lemma~\ref{Lem:continuity} and the Moore--Osgood theorem, the same holds for $\Lambda(v)$. Also if $G(t_0)>0$, we may choose $\eps=0$ in the estimate above and must reach equality at finite $v$. 
\end{proof}
Note that the same clearly holds for $\Lambda_{dir}(v)$ if the distribution is bounded. The major work left is to derive a lower bound on $\Lambda(v)$ to show that the function stays strictly above the asymptote if $G$ has no atoms at the infimum of its support. We will prove it in Section \ref{subsec:above_asymp}.

\begin{proposition}
\label{Prop:above_min}
Let $G$ have no atom at $t_0$. For all $v\ge 0$, we have $\Lambda(v)>t_0+v$.
\end{proposition}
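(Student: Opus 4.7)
The plan is to first reduce to $t_0=0$ by a deterministic shift, then lower-bound the shifted time constant by comparison with a Bernoulli auxiliary model that is itself analysed by coupling to the site percolation model from Lemma~\ref{Lem:spm}. For the reduction, every semi-directed path from $(0,0)$ to $(n,\vn)$ uses exactly $n$ horizontal edges, so writing $\tau_e=t_0+\tau_e'$ gives $T(p)=nt_0+T'(p)$, where $T'$ denotes the passage time for the shifted distribution $G'(x):=G(x+t_0)$. Hence $\Lambda(v)=t_0+\Lambda^{G'}(v)$, and since $G$ has no atom at $t_0$, $G'$ has no atom at $0$. It therefore suffices to prove $\Lambda^{G'}(v)>v$.

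For $\eps>0$ set $p_\eps:=G'([0,\eps))$; the no-atom hypothesis forces $p_\eps\to 0$ as $\eps\to 0$. Let $B_\eps$ denote the Bernoulli model with horizontal weights $\tau_e^{B_\eps}:=0$ when $\tau_e'<\eps$ and $\tau_e^{B_\eps}:=\eps$ otherwise. These weights satisfy $\tau_e^{B_\eps}\le\tau_e'$ pointwise, so Lemma~\ref{Lem:partial_order} gives $\Lambda^{G'}(v)\ge\Lambda^{B_\eps}(v)$. Couple $B_\eps$ with the site percolation model of parameter $p_\eps$ by declaring site $(i,j)$ to be open (so that $W(i,j)=0$) iff the horizontal edge $((i,j),(i+1,j))$ is small in $B_\eps$. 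The crux is the following observation: for any semi-directed non-intersecting path $p$ from $(0,0)$ to $(n,\vn)$ with $r$ vertical edges and $h_s(p)$ small horizontal edges, each such small horizontal edge is sourced at a distinct open site visited by $p$, so
\[
h_s(p)\le (r+n+1)-\sum_{z\in p}W(z).
\]
Substituting into $T^{B_\eps}(p)=r+\eps(n-h_s(p))$ and applying $r\ge\vn$ yields
\[
T^{B_\eps}(p)\ge \vn(1-\eps)+\eps\sum_{z\in p}W(z)-\eps.
\]
Taking the infimum over admissible $p$, the right-hand side becomes $\vn(1-\eps)+\eps\, T_{site}^{p_\eps}((0,0),(n,\vn))-\eps$, which bounds $T^{B_\eps}_*(v)$ from below.

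To conclude, fix $\delta\in(0,1)$ and let $p_\delta>0$ be as in Lemma~\ref{Lem:spm}. Choose $\eps>0$ small enough that $p_\eps<p_\delta$, which is possible by the no-atom assumption. Lemma~\ref{Lem:spm} together with Borel--Cantelli gives $T_{site}^{p_\eps}((0,0),(n,\vn))>n(v+1-\delta)$ for all sufficiently large $n$ almost surely. Dividing the previous bound by $n$, letting $n\to\infty$, and using Theorem~\ref{Thm:time_const} applied to the Bernoulli distribution $B_\eps$ yields
\[
\Lambda^{B_\eps}(v)\ge v(1-\eps)+\eps(v+1-\delta)=v+\eps(1-\delta)>v,
\]
and hence $\Lambda(v)>t_0+v$. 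The main conceptual hurdle is proving the inequality $h_s(p)\le(r+n+1)-\sum_{z\in p}W(z)$: it is what converts a site-percolation lower bound (which controls closed sites \emph{visited}) into a lower bound on the number of big horizontal edges \emph{used} in $B_\eps$. Once this bridge is in place, Lemma~\ref{Lem:spm} does the heavy lifting.
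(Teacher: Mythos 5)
Your proof is correct, and it takes a genuinely different and markedly shorter route than the paper's. The paper proves Proposition~\ref{Prop:above_min} by comparing with Sepp\"al\"ainen's exact directed formula (Theorem~\ref{Thm:exact}) to show geodesics are exponentially unlikely to be directed (Proposition~\ref{Prop:directed}), then runs a renormalization: the deterministic slope dichotomy of Corollary~\ref{Cor:ftc}, trapezoidal crossings of mesoscopic boxes, the events $\mathcal Q_K$ with $\Prob{\mathcal Q_K}\to 0$ (Proposition~\ref{Prop:Gk}), and only at the macroscopic level does it invoke the site-percolation bound of Lemma~\ref{Lem:spm}, through the case analysis (S1)--(S5). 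You stay entirely at the microscopic scale: subtract $t_0$ from every horizontal edge (valid because, by the results of Section~2.1, the infimum in \eqref{eq:inftime} is attained by semi-directed, non-intersecting paths, each of which uses exactly $n$ horizontal edges, so $\Lambda(v)=t_0+\Lambda^{G'}(v)$ -- worth stating explicitly), round the shifted weights down to a $\{0,\eps\}$-valued environment dominated pointwise by $G'$, and charge each cheap horizontal edge of a semi-directed self-avoiding path injectively to the open site at its left endpoint; since such a path has $n+r+1$ vertices and $r\ge\vn$ vertical edges, this gives the pathwise bound $T^{B_\eps}\ge \vn(1-\eps)-\eps+\eps\, T^{p_\eps}_{site}$, which is even sharp when no open sites are present. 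The coupling does produce i.i.d.\ site weights (distinct sites are assigned distinct horizontal edges), $p_\eps=G'([0,\eps))\to0$ is exactly the no-atom hypothesis, Lemma~\ref{Lem:spm} is proved independently of Proposition~\ref{Prop:above_min} so there is no circularity, and the Borel--Cantelli plus Theorem~\ref{Thm:time_const} step is fine (one needs only $\eps<1$ and $p_\eps$ below the threshold of Lemma~\ref{Lem:spm}). What the paper's longer argument buys is geometric information along the way -- non-directedness of geodesics on linear scales and the box structure, which resonates with Theorem~\ref{Thm:diffbounds} -- whereas your argument buys economy: it bypasses Theorem~\ref{Thm:exact}, Proposition~\ref{Prop:directed}, both large-deviation theorems (Theorems~\ref{Thm:upper_deviation} and~\ref{Thm:lower_deviation_dir}) and the entire renormalization, using only stochastic monotonicity (Lemma~\ref{Lem:partial_order}), Theorem~\ref{Thm:time_const} and Lemma~\ref{Lem:spm}.
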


\textit{Proof of \ref{Thm:classification} using Proposition~\ref{Prop:above_min}:} We need to show the equivalence of the following
\begin{enumerate}
\item[(1)] The limit shape has a flat edge at $(0,1)$. 
\item[(2)] There exists $v_0$ such that $\Lambda(v)=v+t_0$ for all $v\ge v_0$. 
\end{enumerate}
Set $x=\cos(\theta)\B(\theta), y=\sin(\theta)\B(\theta)$, then we rewrite $\B(\theta)e^{i\theta}=(x,y)$. Assumption (1) means that there exists $(x_0,y_0)\in \partial \B$ such that the straight line from $(0,1)$ to $(x_0,y_0)$ forms the boundary of the limit shape. Thus for all $x\in (0,x_0)$ we have
\[
 y(x)=1-\frac{1-y_0}{x_0}x
\]
Using the identities $v=y/x$ and $\Lambda(v)=1/x$ coming from Definition~\ref{Def:limit_shape}, this becomes
\[
\Lambda(v)=v+\frac{1-y_0}{x_0} \quad\, \forall v\ge v_0:=y_0/x_0.
\]
Finally by Proposition~\ref{Prop:asymp}, we must have $t_0=(1-y_0)/x_0$. Conversely, (2) translates into $y(x)=1-t_0x$ for all $x<x_0:=1/\Lambda(v_0)$.

\subsection{The local behavior of semi-directed paths}

There are two different ways we can ensure that the time constant is strictly above $v+t_0$. Firstly, if the geodesic to $(n,\vn)$ moves downwards along $\eps n$ vertical edges, then the passage is at least $(v+\eps+t_0)n$. Secondly, if the geodesic is directed in a large box, we can compare its passage time to the directed model where we know by Theorem~\ref{Thm:exact} that the time constant is above the theoretical minimum if the slope in the box is not too large. Thus, before proving Proposition~\ref{Prop:above_min}, we will derive some deterministic results that a semi-directed path has to move downwards a significant amount of times or has some local regions where the slope of the path is bounded. 

\begin{lemma}
\label{Lem:ftc}
Let $f(x)$ be a Riemann-integrable function on $[0,1]$ and set $L=\int_0^1 f(x) \,dx$. For $\eps>0$, define the sets
\[
D=\{x\in [0,1]:\, f(x)<0 \} , \quad L_\eps=\{ x\in [0,1]: f(x)\in [0,L+\eps] \}. 
\]
Then one of the following holds:
\begin{equation}
\label{ineq:Dint}
\left| \int_D  f(x) \, dx \right| \ge \frac \eps 2.
\end{equation}
or
\begin{equation}
\label{ineq:slope}
\mu(L_\eps \cup D) \ge \frac \eps {2(L+\eps)}. 
\end{equation}
where $\mu(\cdot)$ denotes the Lebesgue measure. 
\end{lemma}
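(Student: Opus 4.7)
The plan is to partition $[0,1]$ into three sets according to how $f$ compares to the thresholds $0$ and $L+\eps$: the given set $D$ (where $f<0$), the given set $L_\eps$ (where $0\le f\le L+\eps$), and the residual set $H:=\{x\in[0,1]:f(x)>L+\eps\}$. In the main case $L+\eps>0$ these three pieces form a disjoint cover of $[0,1]$, and the integral decomposes cleanly as $L=\int_D f+\int_{L_\eps} f+\int_H f$. The idea is then to trade one inequality against the other: if $f$ is only slightly negative on average over $D$ (so (\ref{ineq:Dint}) fails), then the ``large'' set $H$ must carry most of the mass $L$, and hence must be small, forcing the complementary set $L_\eps\cup D$ to be large (so (\ref{ineq:slope}) holds).

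I would make this precise by contrapositive. Assume $-\int_D f<\eps/2$. Since $f\ge 0$ on $L_\eps$, we have $\int_{L_\eps} f\ge 0$, and so the decomposition gives
\[
\int_H f \;=\; L-\int_D f-\int_{L_\eps}f \;<\; L+\tfrac{\eps}{2}.
\]
On the other hand, the pointwise bound $f>L+\eps$ on $H$ yields $\int_H f\ge (L+\eps)\mu(H)$. Combining these two estimates gives
\[
(L+\eps)\,\mu(H) \;<\; L+\tfrac{\eps}{2} \;=\; (L+\eps)-\tfrac{\eps}{2},
\]
so $\mu(H)<1-\frac{\eps}{2(L+\eps)}$, and taking complements in $[0,1]$ produces (\ref{ineq:slope}).

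The only thing to address separately is the degenerate range $L+\eps\le 0$, which I would handle first. Here $f\ge 0$ on $[0,1]\setminus D$ forces $\int_D f\le L\le -\eps$, so $|\int_D f|\ge \eps>\eps/2$ and (\ref{ineq:Dint}) holds outright; this is the correct outcome because in this range the right-hand side of (\ref{ineq:slope}) is either negative, infinite, or otherwise useless. I do not foresee any real obstacle: the lemma is essentially a pigeonhole statement about how the total mass $L$ is distributed among the three level sets $\{f<0\}$, $\{0\le f\le L+\eps\}$, $\{f>L+\eps\}$, and the only subtle point is ensuring the level-set decomposition is valid, which it is as soon as $L+\eps>0$.
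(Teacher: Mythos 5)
Your proof is correct and is essentially the paper's argument: both decompose $[0,1]$ into $D$, $L_\eps$, and the residual set where $f>L+\eps$, use the failure of \eqref{ineq:Dint} together with $\int_{L_\eps} f\ge 0$ and the pointwise lower bound $L+\eps$ on the residual set, and rearrange to get \eqref{ineq:slope}. The only difference is your explicit treatment of the degenerate range $L+\eps\le 0$, which the paper leaves implicit (and which never occurs in its application, where $L=v_n\ge 0$ and $\eps>0$).
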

\begin{proof}
If \eqref{ineq:Dint} does not hold, then we have
\[
L=\int_D f(x)\, dx + \int_{L_\eps} f(x)\,dx + \int_{(L_\eps \cup D)^c} f(x)\, dx \ge - \frac \eps 2 + \mu((L_\eps \cup D)^c) (L+\eps)
\]
and hence
\[
\mu(L_\eps \cup D)\ge 1-\frac{L+\eps/2}{L+\eps}=\frac \eps {2(L+\eps)}.
\]
\end{proof}

\begin{corollary}
\label{Cor:ftc}
Let $p$ be a semi-directed path from the origin to $(n,\vn)$ and let $\eps>0$. For each proper divisor $K$ of $n$, divide the cylinder $[0,n]\times \Z$ into $n/K$ strips and count the ratio $\eta$ of strips in which the slope of $p$ is bounded from above:
\[
\eta:=\frac K n \left|\left\{ i\in \{0,...,n/K-1\}: \, f_p^*(K(i+1))-f_p^*(K i)\le K(v_n+\eps) \right\}\right|
\]
where $v_n=\vn/n$ and $f_p^*(x)=f_p(x)$ as defined in Section~\ref{Sec:equiv_model} with the exception of $f_p^*(n)=(n,\vn)$. Then $p$ moves downwards along at least $\eps n/(2K)$ vertical edges or $\eta > \eps/ (2(v_n+\eps))$. 
\end{corollary}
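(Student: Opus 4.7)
The plan is to apply Lemma~\ref{Lem:ftc} to a step function on $[0,1]$ whose value on the $i$-th subinterval equals the average slope of the pioneer function $f_p^*$ across the corresponding strip, and then translate the two alternatives of the lemma back into statements about $p$.

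More concretely, I would define $f:[0,1]\to\R$ by
\[
f(x)=\frac{f_p^*(K(i+1))-f_p^*(Ki)}{K}\quad\text{for }x\in\bigl[iK/n,\,(i+1)K/n\bigr),\ \ i=0,\dots,n/K-1.
\]
This step function is bounded, hence Riemann integrable, and its integral telescopes to $L=\int_0^1 f(x)\,dx=(f_p^*(n)-f_p^*(0))/n=v_n$. With this choice, the set $L_\eps\cup D$ of Lemma~\ref{Lem:ftc} is precisely the union of the subintervals on which $(f_p^*(K(i+1))-f_p^*(Ki))/K\le v_n+\eps$, so $\mu(L_\eps\cup D)=\eta$. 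Lemma~\ref{Lem:ftc} then forces one of two alternatives.

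In the first, inequality \eqref{ineq:slope} gives $\eta\ge \eps/(2(v_n+\eps))$, matching the second conclusion of the corollary (strict inequality can be produced by first applying the lemma with a slightly smaller parameter $\eps'<\eps$). In the second, inequality \eqref{ineq:Dint} must be translated into a count of downward vertical edges of $p$. The key geometric observation is that a semi-directed path cannot return to a column once it has moved right of it, so the portion of $p$ between its first hit of column $Ki$ and its first hit of column $K(i+1)$ consists only of vertical moves on columns $Ki,\dots,K(i+1)-1$ together with rightward steps. The elementary triangle inequality $\sum_j\max(0,-a_j)\ge\max\bigl(0,-\sum_j a_j\bigr)$ applied to the vertical increments then shows that the number of downward vertical edges of $p$ lying in the strip $[Ki,K(i+1)]\times\Z$ is at least $\max\{0,\,f_p^*(Ki)-f_p^*(K(i+1))\}$. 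Summing this lower bound over strips with negative slope gives a total of at least $n\bigl|\int_D f\,dx\bigr|\ge n\eps/2\ge n\eps/(2K)$ downward vertical edges of $p$, which is the first conclusion.

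The only subtle step is the bookkeeping in the second case; once the triangle-inequality observation relating net pioneer displacement to downward vertical edges in each strip is in hand, both alternatives of the corollary follow essentially mechanically from Lemma~\ref{Lem:ftc}.
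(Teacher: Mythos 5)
Your proof is correct and follows essentially the same route as the paper: the paper applies Lemma~\ref{Lem:ftc} to the derivative of the piecewise-linear interpolation of the pioneer points $f_p^*(Ki)$, which after rescaling is exactly your step function of strip-averaged slopes, and then translates the two alternatives strip by strip just as you do. The only differences are minor: your accounting of downward edges via the net displacement $\max\{0,\,f_p^*(Ki)-f_p^*(K(i+1))\}$ over the disjoint subpaths between first hits of columns $Ki$ and $K(i+1)$ is in fact slightly sharper and more careful than the paper's one-edge-per-negative-slope-strip count (it gives about $\eps n/2$ downward edges rather than $\eps n/(2K)$), and the strict inequality for $\eta$ is glossed over in the paper exactly as in your write-up --- it follows most cleanly by noting that failure of \eqref{ineq:Dint} yields the strict form of \eqref{ineq:slope} in the proof of Lemma~\ref{Lem:ftc}, rather than by shrinking $\eps$ (which is delicate when $K=1$).
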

\begin{proof}
Let $F(x)$ be the piecewise linear function connecting the points $f_p^*(Ki)$ for $i=0,...,n/K$ in that order and let $f(x)$ be the derivative of $F(x)$ wherever it is differentiable. Now apply Lemma~\ref{Lem:ftc} with $L=v_n$ and rescaled by $n$, we have one of the following: If \eqref{ineq:Dint} occurs, then there are at least $\eps/2$ out of the $n/K$ strips of width $K$ where the slope of $p$ is negative. Hence, the path $p$ contains moves downwards along at least one vertical edge in the strip and the total amount of such edges for $p$ is at least $\eps n/(2K)$. On the other hand, if \eqref{ineq:slope} occurs, then at least $\eps/(2(v_n +\eps))$ of the $n/K$ strips have slope at most $K(v_n+\eps)$. 
\end{proof}

\subsection{Staying above the asymptote}\label{subsec:above_asymp}

We will now prove Proposition~\ref{Prop:above_min}. Construct a set of Bernoulli distributions $Ber(G,v)$ as follows:

For $v\ge 0$, choose $t(v)>t_0$ such that for $p(v):=G(t(v))$ we have $v\le (1-p(v))/p(v)$. Then let $Ber(G,v)$ be a probability distribution for the edges $e$ defined by:
\[
\Prob{ e=t_0}=p(v), \, \Prob{e=t(v)}=1-p(v).
\]
A $Ber(G,v)$-distributed weight $\tau'$ can be coupled with a $G$-distributed weights by setting
\begin{equation}\label{eq:coupling}
\tau'=t_0 \;\text{ if } \tau \in [t_0,t(v)], \quad \tau'=t(v) \; \text{ otherwise. }
\end{equation}
Let $\Lambda_{S(v,G)}$ be the time constant function in the directed model with distribution $Ber(G,v)$. We note the following from Theorem~\ref{Thm:exact}:

\begin{proposition}
\label{Prop:dir_asymp}
Let $G$ have no atom at $t_0$. Then $\Lambda_{S(v,G)}(v)>v+t_0$ for all $v\ge 0$. 
\end{proposition}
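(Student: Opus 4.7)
The plan is a direct application of Sepp\"{a}l\"{a}inen's exact formula (Theorem~\ref{Thm:exact}) to the two-point distribution $Ber(G,v)$; the only subtle point is exploiting the no-atom hypothesis to force a \emph{strict} inequality at a place where Theorem~\ref{Thm:exact} degenerates on the boundary.

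First, I would sharpen the choice of $t(v)$. Because $G$ has no atom at $t_0$, the distribution function $t\mapsto G([0,t])$ is right-continuous with $G([0,t_0])=0$, so $p(v):=G([0,t(v)])\downarrow 0$ as $t(v)\downarrow t_0$. For each $v\ge 0$ I may therefore pick $t(v)>t_0$ close enough to $t_0$ that $p(v)<1/(v+1)$, which is equivalent to the \emph{strict} inequality $v<(1-p(v))/p(v)$ --- a refinement of the weak inequality used in the construction of $Ber(G,v)$. With $\lambda=t_0$, $\kappa=t(v)$, $p=p(v)$ and $q=1-p(v)$, Theorem~\ref{Thm:exact} then places us in the second branch and yields
\[
\Lambda_{S(v,G)}(v)=t_0+v+(t(v)-t_0)\bigl(\sqrt{q}-\sqrt{pv}\bigr)^2.
\]
The prefactor $t(v)-t_0$ is positive by construction, and $pv<q$ gives $\sqrt{pv}<\sqrt{q}$, so the squared term is also strictly positive; hence $\Lambda_{S(v,G)}(v)>t_0+v$. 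Note also that the borderline case $v=0$ requires only $p(v)<1$, which is automatic for any $t(v)$ close to $t_0$.

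The only obstacle is precisely the boundary case $v=q/p$ of Sepp\"{a}l\"{a}inen's formula, where the correction term collapses to zero. The no-atom hypothesis is exactly what allows us to avoid it: an atom of mass $\alpha>0$ at $t_0$ would force $p(v)\ge\alpha$ for every admissible $t(v)>t_0$, and for $v\ge(1-\alpha)/\alpha$ one could no longer arrange $v<q/p$. In this sense Proposition~\ref{Prop:dir_asymp} is essentially a reformulation of this boundary analysis, and beyond verifying the single inequality $p(v)<1/(v+1)$ the argument is routine.
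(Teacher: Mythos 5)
Your proposal is correct and follows the same route as the paper, which simply reads off Proposition~\ref{Prop:dir_asymp} from Sepp\"{a}l\"{a}inen's formula (Theorem~\ref{Thm:exact}) applied to $Ber(G,v)$ with $\lambda=t_0$, $\kappa=t(v)$. Your extra care in using the no-atom hypothesis to pick $t(v)$ with $p(v)<1/(v+1)$, so that $v<q/p$ holds strictly and the correction term $(t(v)-t_0)(\sqrt{q}-\sqrt{pv})^2$ is strictly positive, is exactly the detail the paper leaves implicit (its construction only demands $v\le(1-p(v))/p(v)$), and it is the right way to rule out the degenerate boundary case.
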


So set $\eps:=\Lambda_{S}(v)-(t_0+v)>0$ where we abbreviate $\Lambda_S=\Lambda_{S(G,v)}$.  We claim that for all $0\le w < v$ we have 
\begin{equation}
\label{ineq:epsa}
\Lambda_{S}(w)-(t_0+w)\ge \eps.
\end{equation}
Suppose not, then the distance of $\Lambda_{S}$ from the straight line is smaller at $w$ than at $v$, so the slope between $v$ and $w$ is greater than 1. But by convexity of $\Lambda_{S}$ this contradicts the fact that $\Lambda_S$ eventually coincides with a linear function of slope 1. 

This motivates the definition of the following subset of $[0,v]$:
\[
\J:=\{ w \in [0,v]: \, \Lambda_{S}(w)-\Lambda(w) \ge \eps/2 \}.
\]
For $w \in \J^c$, inequality \eqref{ineq:epsa} implies $\Lambda(w)\ge t_0+w+\eps/2  $. In particular, if $v\in J^c$, then we are done, so let us assume $v \in J$.

\begin{proposition}
\label{Prop:directed}
There exist constants $A_1,b_1>0$ such that for all $w \in J$ and $n\ge 1$
\[
\Prob{ \gamma^n(w) \text{ is directed } }\le A_1 \exp(-b_1 n). 
\]
\end{proposition}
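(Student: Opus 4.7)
The plan is to exploit the coupling \eqref{eq:coupling} to compare our undirected model against Sepp\"al\"ainen's directed model with two--point distribution $Ber(G,v)$. The key observation is that under this coupling, the $Ber(G,v)$--weight $\tau'_e$ and the $G$--weight $\tau_e$ of every horizontal edge satisfy $\tau'_e \le \tau_e$ pointwise: if $\tau_e \in [t_0,t(v)]$ then $\tau'_e = t_0 \le \tau_e$, and if $\tau_e > t(v)$ then $\tau'_e = t(v) < \tau_e$. Consequently, on the event that the geodesic $\gamma^n(w)$ in our model happens to be directed (i.e.\ has no down--turns, so $D(\gamma^n(w))=0$), it is an admissible path in the SJ model and we obtain the sandwich
\[
T^{Ber(G,v)}_{dir}\bigl((0,0),(n,\wn)\bigr) \;\le\; \sum_{e \in \gamma^n(w)} \tau'_e \;\le\; \sum_{e \in \gamma^n(w)} \tau_e \;=\; T\bigl((0,0),(n,\wn)\bigr).
\]

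Next I would apply the two large deviation estimates already at hand, both taken with $v_0 = v$ so that the constants are uniform in $w \in [0,v]$. The right tail (Theorem~\ref{Thm:upper_deviation}) for our undirected model gives
\[
\Prob{T((0,0),(n,\wn)) \ge n\bigl(\Lambda(w)+\eps/4\bigr)} \le A\exp(-Bn),
\]
while the left tail in the SJ model with Bernoulli weights (Theorem~\ref{Thm:lower_deviation_dir}) gives
\[
\Prob{T^{Ber(G,v)}_{dir}((0,0),(n,\wn)) \le n\bigl(\Lambda_S(w)-\eps/4\bigr)} \le A'\exp(-B'n).
\]

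Now, on the intersection of the two complementary (high probability) events, the sandwich forces
\[
n\bigl(\Lambda_S(w)-\eps/4\bigr) \;<\; T^{Ber(G,v)}_{dir}\bigl((0,0),(n,\wn)\bigr) \;\le\; T\bigl((0,0),(n,\wn)\bigr) \;<\; n\bigl(\Lambda(w)+\eps/4\bigr),
\]
so $\Lambda_S(w) - \Lambda(w) < \eps/2$, which contradicts $w \in J$. Therefore the event $\{\gamma^n(w) \text{ is directed}\}$ is contained in the union of the two exceptional large deviation events, yielding the desired bound with $A_1 := A+A'$ and $b_1 := \min(B,B')$.

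The only delicate point is the direction of the coupling inequality; once $\tau' \le \tau$ is secured, the rest of the argument is essentially bookkeeping, combining the uniform large deviation constants on $[0,v]$ with the definition of $J$. I would also briefly remark that a directed path is a fortiori semi--directed and non--intersecting, so restricting attention to the geodesic chosen in our model causes no loss.
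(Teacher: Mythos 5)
Your proposal is correct and follows essentially the same route as the paper: on the event that $\gamma^n(w)$ is directed, the coupling \eqref{eq:coupling} gives $T_S((0,0),(n,\wn))\le T((0,0),(n,\wn))$, and combining the uniform right-tail bound (Theorem~\ref{Thm:upper_deviation}) for the undirected model with the left-tail bound (Theorem~\ref{Thm:lower_deviation_dir}) for the directed Bernoulli model contradicts $\Lambda_S(w)-\Lambda(w)\ge \eps/2$ for $w\in J$. The only difference is cosmetic: the paper argues directly that the directed event forces the left-tail deviation event under $\mathcal M_{n,w}$, while you phrase it as a contradiction on the intersection of the two good events.
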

\begin{proof}
Let $\mathcal M_{n,w}$ be the event $T((0,0),(n,\wn))\le (\Lambda(w)+\eps/4)n$, then by Theorem~\ref{Thm:upper_deviation}, there exists $A_2,B_2>0$ uniform for $w\in [0,v]$ such that $\Prob{\mathcal{M}_{n,w}^c}\le A_2 \exp(-B_2 n)$. So under the event $\mathcal M_{n,w}$, assume $\gamma^n(w)$ is directed. Then we have $T((0,0),(n,\wn))=T_{dir}((0,0),(n,\wn))\ge T_S((0,0),(n,\wn))$, where we use the coupling \eqref{eq:coupling}. Since $w \in J$, this means the event $\{T_S((0,0),(n,\wn))\le (\Lambda_S(w)-\eps/4)n\}$ occurs, which is also exponentially small by Theorem~\ref{Thm:lower_deviation_dir}. 
\end{proof}

Since $\Lambda$ is Lipschitz and $\Lambda_S$ has an explicit formula, we can compute $\delta>0$ such that $\Lambda_S(v')-\Lambda(v')\ge \eps/4$ for all $v'\in [v,v+\delta]$. Clearly, Proposition~\ref{Prop:directed} holds for such $v'$ as well with modified constants. 

We now construct a map from our edge-model with distribution $G$ to a macroscopic, standard FPP Bernoulli-distributed site model. 

Given $K$ to be determined later, let $B_K \subset \Z^2$ be the collection of vertices and edges contained in the half-open rectangle $[0,K)\times [0, 2\lceil K(v+\delta)\rceil)$, where we naturally embed the lattice in $\R^2$. By applying translations of the rectangle of the form $\theta_{ij}=(iK, 2j \lceil K(v+\delta)\rceil)$ with $i,j\in \Z$, we form a set of rectangles $\mathcal A_K$ partitioning the lattice such that each edge and vertex lies in a unique rectangle. Translating each rectangle by $\theta'=(0, \lceil K(v+\delta)\rceil)$ yields another partition $\mathcal A_K'$ with the same property. We say a semi-directed path $p$ \emph{trapezoidally crosses} $B_K$ if it contains a vertex in $\{0\}\times [0, \lceil K(v+\delta)\rceil)$ and $\{K\}\times [0, 2\lceil K(v+\delta)\rceil)$ and the subpath of $p$ between these vertices is completely contained in $B_K$. We adopt the terminology for translations of $B_K$. 

Let $T(B_K;h_1,h_2)$ be the passage time from $(0,h_1)$ to $(K,h_2)$ over all paths completely contained in $B_K$. Define the event $\mathcal Q_K$ that the trapezoidal crossing time is close to its infimum:
\[
\mathcal Q_K:=\left\{ \begin{array}{l}  \exists h_1,h_2: \,  \, 0\le h_1 < \lceil K(v+\delta)\rceil, \, 0\le h_2 \le h_1+\lceil K(v+\delta)\rceil:\\ T(B_K;h_1,h_2)< h_2-h_1+Kt_0+\min(2,K\eps/4) \end{array} \right\}.
\]
We generalize this to arbitrary $\mathbf x \in \mathcal A_K \cup \mathcal A_K'$.  Choose $i,j \in \Z$, $b\in \{0,1\}$ such that $\mathbf x=B_K+\theta_{i,j}+b \theta'$. And let $T(\mathbf x;h_1,h_2)$ be the passage time from $(iK, h_1)$ to $((i+1)K, h_2)$ over all paths completely contained in $\mathbf x$. We define the translated event
\[
\mathcal Q_k^{\mathbf x}:=\left\{ \begin{array}{l}  \exists h_1\in [(2j+b)\lceil K(v+\delta)\rceil, (2j+b+1)\lceil K(v+\delta)\rceil), \\ h_2 \in  [(2j+b)\lceil K(v+\delta)\rceil, h_1+\lceil K(v+\delta)\rceil] :\\ T(\mathbf x;h_1,h_2)< h_2-h_1+Kt_0+\min(2,K\eps/4) \end{array} \right\}.
\]
By disjointness of the rectangles, each collection of Bernoulli random variables $\{\1(\mathcal Q_k^{\mathbf x}), \, x \in \mathcal A_k \}$ and $\{\1(\mathcal Q_k^{\mathbf x}), \, x \in \mathcal A_k' \}$ is i.i.d.

\begin{proposition} \label{Prop:Gk} $\lim_{K\to\infty} \Prob{\mathcal Q_K} = 0.$\end{proposition}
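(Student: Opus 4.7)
The plan is to take a union bound over the $O(K^2)$ admissible pairs $(h_1, h_2)$ and show that each individual probability decays exponentially in $K$. The key observation is that the additive slack $\min(2, K\eps/4)$ in the definition of $\mathcal{Q}_K^{\mathbf x}$ equals $2$ once $K \ge 8/\eps$, and $2$ is precisely the minimal vertical cost of a single down-up (or up-down) excursion. For such $K$, any semi-directed path $p$ from $(0, h_1)$ to $(K, h_2)$ that is not monotone in its $y$-coordinate uses at least $|h_2 - h_1| + 2$ vertical edges together with at least $K$ horizontal edges of weight $\ge t_0$, hence $T(p) \ge K t_0 + |h_2 - h_1| + 2$; similarly, for $h_2 < h_1$ the trivial bound $T \ge K t_0 + (h_1 - h_2)$ already matches or exceeds the threshold $K t_0 + (h_2 - h_1) + 2$. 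Therefore $\mathcal Q_K^{h_1, h_2}$ can only occur with $h_2 \ge h_1$ and via a directed (right-and-up) path, which automatically lies in $[0, K] \times [h_1, h_2] \subset B_K$. Hence $\mathcal Q_K^{h_1, h_2}$ implies
\[
T_{dir}^G((0, h_1), (K, h_2)) < K(t_0 + w) + 2, \qquad w := (h_2 - h_1)/K \in [0, v + \delta + 1/K].
\]

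By the coupling \eqref{eq:coupling} one has $T_{dir}^G \ge T_{dir}^{Ber(G,v)}$, so the event further implies $T_{dir}^{Ber(G,v)}((0, h_1), (K, h_2)) < K(t_0 + w) + 2$. Combining inequality \eqref{ineq:epsa} on $[0, v]$ with the continuity choice of $\delta$ stated just before Proposition~\ref{Prop:directed} on $[v, v + \delta]$ (and the trivial bound $\Lambda(w) \ge t_0 + w$) yields $\Lambda_S(w) \ge t_0 + w + \eps/4$ uniformly in $w \in [0, v + \delta]$. For $K \ge 16/\eps$ the threshold $K(t_0 + w) + 2$ is therefore bounded above by $K(\Lambda_S(w) - \eps/8)$, and Theorem~\ref{Thm:lower_deviation_dir} applied to the SJ model with Bernoulli weights $Ber(G, v)$, with $v_0 := v + \delta + 1$ in its uniformity clause, yields
\[
\Prob{\mathcal Q_K^{h_1, h_2}} \le A \exp(-BK)
\]
for constants $A, B > 0$ independent of $(h_1, h_2)$.

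Summing over the $O(K^2)$ pairs gives $\Prob{\mathcal Q_K} \le C K^2 \exp(-BK) \to 0$. The main conceptual obstacle is recognizing the calibration of the threshold: only when one notices that the $2$ in $\min(2, K\eps/4)$ is exactly matched to the cost of a single down-up excursion does the problem collapse to the explicitly solvable SJ--Bernoulli setting, where Theorem~\ref{Thm:exact} (through the formula for $\Lambda_S$) and the large-deviation estimate of Theorem~\ref{Thm:lower_deviation_dir} finish everything. The remaining technicalities---verifying the box containment of directed paths, discarding the $h_2 < h_1$ range, and obtaining uniform constants $A, B$ across $w \in [0, v + \delta]$---are straightforward consequences of the respective cited results.
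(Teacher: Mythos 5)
Your proof is correct, but it takes a genuinely different (and leaner) route than the paper. The paper splits according to whether the renormalized slope $w$ lies in $\J$ or $\J^c$: for $w\in \J$ it invokes Proposition~\ref{Prop:directed} (which itself combines Theorem~\ref{Thm:upper_deviation} with Theorem~\ref{Thm:lower_deviation_dir}) to say the geodesic is directed only with exponentially small probability, and uses that a non-directed semi-directed geodesic costs at least $\Delta h+Kt_0+2$; for $w\in \J^c$ it uses $\Lambda(w)\ge t_0+w+\eps/2$ and the left-tail bound for the \emph{undirected} model. You instead observe that, once $K\ge 8/\eps$ so the slack is exactly $2$, \emph{any} below-threshold path must be directed (the case $h_2<h_1$ being impossible outright), and then pass deterministically through the coupling \eqref{eq:coupling} to the directed Bernoulli model, where $\Lambda_S(w)\ge t_0+w+\eps/4$ on $[0,v+\delta]$ (via \eqref{ineq:epsa}, the choice of $\delta$, and $\Lambda\ge t_0+w$) makes Theorem~\ref{Thm:lower_deviation_dir} applicable uniformly; this bypasses the $\J$/$\J^c$ dichotomy, Proposition~\ref{Prop:directed}, and the right-tail estimate altogether, at the price of nothing essential --- it buys a shorter, purely one-case argument, while the paper's version reuses machinery ($\J$, Proposition~\ref{Prop:directed}) it has already set up for the global scheme. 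Two small points you should make explicit: (i) your cost count covers semi-directed paths only, and when $t_0<1$ a path with left-steps is not obviously expensive, so you need the left-turn-erasure reduction from the first proposition of Section~2 (or, simpler, bound $T(B_K;h_1,h_2)$ below by the unrestricted passage time and apply the count to its semi-directed geodesic, which also removes any box-containment discussion); (ii) your $w=(h_2-h_1)/K$ can exceed $v+\delta$ by up to $1/K$, so either define $w$ through $\Delta h=\lceil Kw\rceil$ with $w\in[0,v+\delta]$ as the paper does, or add a one-line Lipschitz adjustment of $\Lambda_S$ near $v+\delta$ for large $K$. Both are routine fixes and do not affect the validity of your argument.
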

\begin{proof}
For each value of $\Delta h:=|h_2-h_1|$, choose $w(\Delta h)\in [0,v+\delta]$ such that $\Delta h=\lceil Kw \rceil$. If $w\in J$, then the geodesic $\gamma$ from $(0,h_1)$ to $(K,h_2)$ is directed with probability at most $A_1 \exp(-b_1K)$ by Proposition~\ref{Prop:directed}.  And if $\gamma$ is not directed, it contains at least $\Delta h+2$ vertical edges. Hence with exponentially large proability and fixed $h_1,h_2$
\[
T(B_K;h_1,h_2)\ge T(\gamma)\ge h_2-h_1+Kt_0+2. 
\]
If $w\in J^c$, then $\Lambda(w)\ge t_0 + w +\eps/2$. Choose $K>8/\eps$, then the event $T(B_K;h_1,h_2)< h_2-h_1+Kt_0+K\eps/4$ implies $T(\gamma)\le K(t_0+w+3\eps/8)$, which is small by Theorem~\ref{Thm:lower_deviation_dir}. After taking a union bound over the order $K^2$ values for $h_1$ and $h_2$, the probability of the event $\mathcal Q_K$ is still decreasing in $K$. 
\end{proof}

\begin{figure}[h]
\begin{center}
\begin{minipage}[c]{0.8\textwidth}
\includegraphics[width=0.9\textwidth]{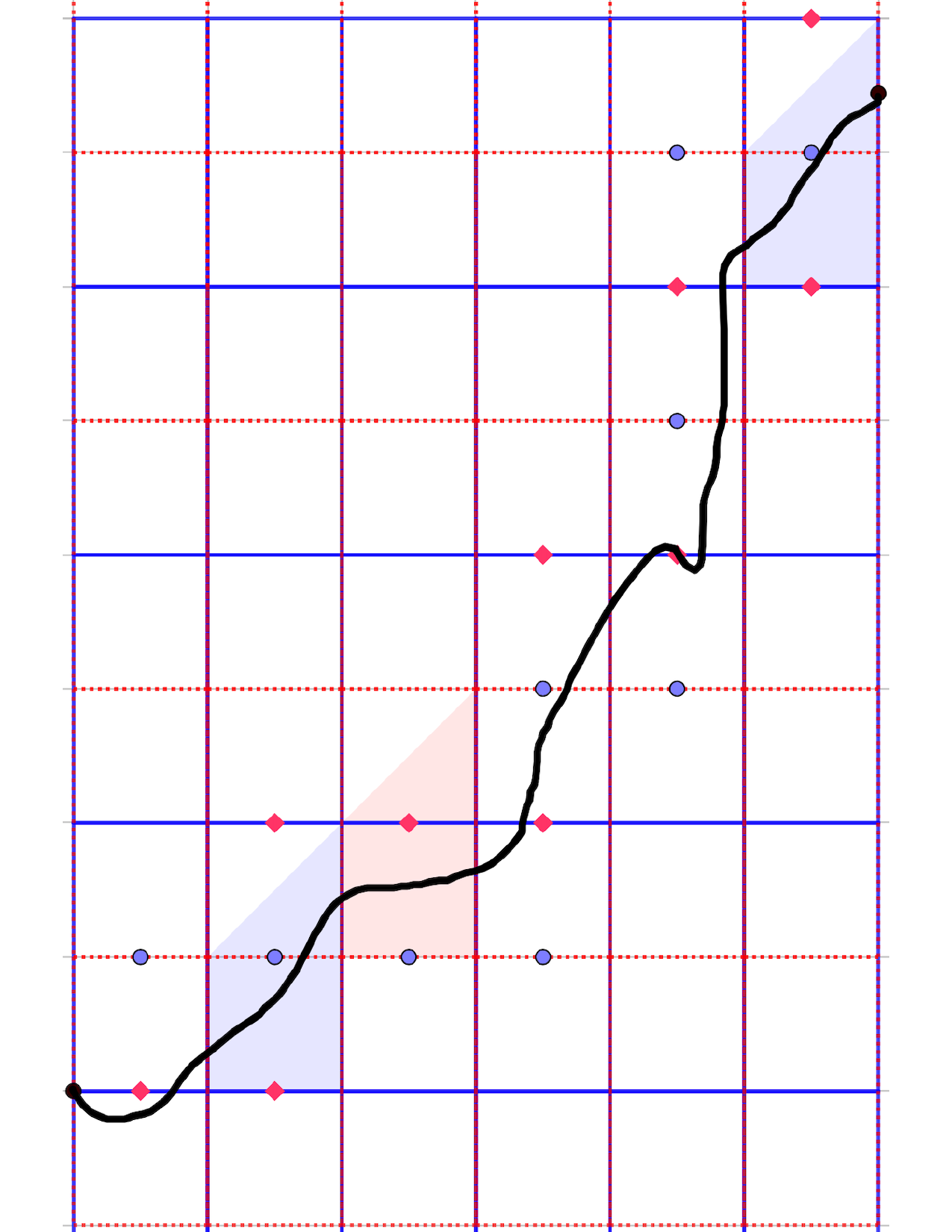}
\end{minipage}
\end{center}
\caption{A semi-directed path starting from the origin together with the associated macroscopic vertices from $\mathcal A_K$ (blue) and $\mathcal A_K'$ (red). Shaded regions indicate a trapezoidal crossing.}
\label{Fig:macro} 
\end{figure}

Consider a semi-directed path $\gamma$ from $(0,0)$ to $(n,\vn)$, which traverses the adjacent vertices $x_0=(0,0),x_1,x_2,...,x_N=(n,\vn)$ in this order. Let $\mathbf x_i$ and $\mathbf x_i'$ be the elements of $\mathcal A_K$ and $\mathcal A_K'$, respectively, such that $x_i \in \mathbf x_i \cap \mathbf x_i'$.  After erasing loops (i.e.\ removing all $\mathbf x_k$ with $i<k\le j$ and $\mathbf x_i=\mathbf x_j$), we then obtain the semi-directed macroscopic image paths $\boldsymbol\phi(\gamma)$ from $\mathbf 0:=B_K$ to $\mathbf x_N$ as well as the path $\boldsymbol{\phi}'(\gamma)\subset \mathcal A_K'$ from $\mathbf 0':=B_K-\theta$ to $\mathbf x_N'$. 

Let $p=\Prob{\mathcal Q_K}$ and we give each macroscopic vertex weight $0$ with probability $p$ and weight $1$ with probability $1-p$. We thus couple the microscopic model with a site percolation model on the macroscopic vertices $\mathcal A_K$ (see Figure~\ref{Fig:macro}).  

We will now make a case distinction on $\gamma$ when $\gamma$ is the geodesic and show that in each case $$T(\gamma)=T((0,0),(n,\vn))\ge (t_0+v+\eps_0)n$$ for some $\eps_0>0$ surely or with exponentially high probability in $n$. By Theorem~\ref{Thm:upper_deviation}, this yields Proposition~\ref{Prop:above_min}. 

By Corollary~\ref{Cor:ftc} with $\eps=\delta/2$, one of two things hold. Either 
\begin{equation}\tag{S1a}\label{S1a}
\mem{The path }\gamma \mem{ moves downward along at least } \delta n/(4K) \mem{vertical edges.}
\end{equation}
Then $T(\gamma)\ge (t_0+v+\delta/(2K))n$ as desired. Or
\begin{equation}\tag{S1b}\label{S1b}
\begin{split}
&\mem{There exists at least } \delta n/(4K(v+\delta/2)) \mem{ horizontal strips of width } K \\
&\mem{in which the slope is at most } v+\delta/2.
\end{split}
\end{equation}

We can split the statement \eqref{S1b} further into the two cases
\begin{align}
&\mem{The subpath of }\gamma \mem{ in half of the } \delta n/(4K(v+\delta/2)) \mem{ horizontal strips} \tag{S2a}\label{S2a} \\
&\mem{ is not directed.}\nonumber \\
&\mem{The subpath of }\gamma \mem{ in at least } \delta n/(8K(v+\delta/2)) \mem{ of the horizontal strips} \tag{S2b}\label{S2b}\\
&\mem{ is directed and of bounded slope.}\nonumber
\end{align}

If \eqref{S2a} holds, we obtain the bound $T(\gamma)\ge (t_0+v+1/(4K(v+\delta/2)))n$, because each subpath must have at least two additional vertical edges. Thus we are left with \eqref{S2b}, which we further divide into two cases according to the heights at which the path $\gamma$ enters a strip more frequently

\begin{equation}\tag{S3a}\label{S3a}
\begin{split}
&\mem{There exist at least } \delta n/(16K(v+\delta/2)) \mem{ elements of } \mathcal A_K' \\
&\mem{ that are trapezoidally crossed by } \gamma.
\end{split}
\end{equation}
\begin{equation}\tag{S3b}\label{S3b}
\begin{split}
&\mem{There exist at least } \delta n/(16K(v+\delta/2)) \mem{ elements of } \mathcal A_K \\
&\mem{ that are trapezoidally crossed by } \gamma.
\end{split}
\end{equation}

We continue with \eqref{S3b}, the argument for \eqref{S3a} will be analogous. Let $\mathcal T_K\subset \mathcal A_k$ denote the subset of rectangles trapezoidally crossed by $\gamma$ and we distinguish:

\begin{align}\tag{S4a}\label{S4a}
&\mem{For half of the elements }\mathbf x \in \mathcal T_K \mem{ the event } \mathcal Q_K^{\mathbf x} \emph{ does not occur.}\\
\tag{S4b}\label{S4b}
&\mem{For half of the elements }\mathbf x \in \mathcal T_K \mem{ the event } \mathcal Q_K^{\mathbf x} \emph{ occurs.}
\end{align}
By definition of $\mathcal Q_K$ , the event \eqref{S4a} yields $T(\gamma)\ge (t_0+v+min(2,K\eps/4)/(32K(v+\delta/2))n$. To understand \eqref{S4b}, we look at the macroscopic model and distinguish

\begin{equation}\tag{S5a}\label{S5a}
\begin{split}&\mem{The path } \boldsymbol\phi(\gamma) \mem{ moves downwards along at least } \delta n/(64K(v+\delta/2))\\& \mem{ macroscopic edges.}\end{split}
\end{equation}
\begin{equation}\tag{S5b}\label{S5b}
\begin{split}&\mem{The path } \boldsymbol\phi(\gamma) \mem{ moves downwards along at most } \delta n/(64K(v+\delta/2)) \\&\mem{ macroscopic edges.}\end{split}
\end{equation}

For each downward move of $\boldsymbol\phi(\gamma)$ , the microscopic path $p$ must move downwards along one edge. So if \eqref{S5a} holds, then $T(\gamma)\ge (t_0+v+\delta /(64K(v+\delta/2))n$. We conclude by showing that \eqref{S5b} is unlikely. Recall $x_n=(n,\vn)$, so the macroscopic coordinates of $\mathbf x_N$ are approximately $(n/K, vn/(2K(v+\delta))$ with a negligible rounding error in each coordinate. For the macroscopic passage time $\mathbf T(\boldsymbol\phi(\gamma))$  we then get the upper bound by counting the number of sites of $\boldsymbol\phi(\gamma)$ and subtracting all sites with weight $0$ where $\mathcal Q_K^{\mathbf x}$ occurs:
\begin{equation}
\label{ineq:low_macro}
\hspace*{-2mm}
\mathbf T(\boldsymbol\phi(\gamma))\le \left[v/(2(v+\delta))+\delta/(64(v+\delta/2))+(1-\delta/(32(v+\delta/2))) \right](n/K)
\end{equation}

Finally, we apply Lemma~\ref{Lem:spm} with $\eps=\delta/(2^7(v+\delta/2))$. For large enough $K$, the condition $p<p_\eps$ is ensured by Proposition~\ref{Prop:Gk}. We conclude that such a path $\boldsymbol\phi(\gamma)$ with very low passage time \eqref{ineq:low_macro} exists only with exponentially small probability. This finishes the proof of Proposition~\ref{Prop:above_min}.

\section{Proofs of auxiliary results}
\label{Sec:appendix}
Most of the results covered in this section are known for the standard FPP model. The proofs thus follow existing literature, though some arguments are made easier by the simplification of our model. 
\subsection{The time constant, proof of Theorem~\ref{Thm:time_const}}\label{ss:time_const}

The heart of the proof is the subadditive ergodic theorem, originally developed by Kingman. The version below is due to Liggett.

\begin{theorem}[The subadditive ergodic theorem (as stated in \cite{50years})] Let $(X_{m,n})_{0\le m < n}$ be a family of random variables that satisfies:

\begin{enumerate}[label=(\alph*)]
\item $X_{0,n}\le X_{0,m} + X_{m,n}$, for all $0<m<n$.
\item The distribution of the sequences $(X_{m,m+k})_{k\ge 1}$ and $(X_{m+1,m+k+1})_{k\ge 1}$ is the same for all $m\ge 0$.
\item For each $k\ge 1$, the sequence $(X_{nk,(n+1)k})_{n\ge 0}$ is stationary. 
\item $\E{}{X_{0,1}}<\infty$ and $\E{}{X_{0,n}}>-cn$ for some finite constant $c$. 
\end{enumerate}
Then 
\begin{equation}
\lim_{n\to \infty} \frac{X_{0,n}}{n}  \text{ exists a.s. and in }  L^1.
\label{eq:mvt}
\end{equation}
Furthermore, if the stationary sequence in (c) is also ergodic, then the limit in \eqref{eq:mvt} is constant almost surely and equal to 
\begin{equation}
\label{eq:mvt2}
\lim_{n\to \infty} \frac{\E{}{X_{0,n}}}{n} = \inf_n \frac 1 n \E{}{X_{0,n}}.
\end{equation}

\end{theorem}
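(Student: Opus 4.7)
The plan is to prove the subadditive ergodic theorem in three steps: (A) establish the deterministic limit $\gamma := \lim_n \E{}{X_{0,n}}/n$ as the infimum via Fekete's lemma; (B) use subadditivity together with Birkhoff's ergodic theorem applied to blocks to obtain the almost-sure upper bound $\limsup X_{0,n}/n \le \gamma$; (C) match this with a lower bound coming from condition (d) and Fatou, then promote to $L^1$ by uniform integrability.

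Step (A) is immediate. Condition (b) implies $\E{}{X_{m,n}} = \E{}{X_{0,n-m}}$ for all $m<n$, so from (a) the deterministic sequence $a_n := \E{}{X_{0,n}}$ satisfies $a_{m+n}\le a_m+a_n$. Fekete's subadditive lemma then gives $a_n/n \to \gamma := \inf_n a_n/n$. The lower bound in (d) ensures $\gamma \ge -c > -\infty$, while $a_1 = \E{}{X_{0,1}} < \infty$ gives $\gamma \le a_1 <\infty$.

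For step (B), fix $m\ge 1$. Iterating (a) yields
\[
X_{0,Nm} \le \sum_{j=0}^{N-1} X_{jm,(j+1)m}.
\]
By (c), the summands form a stationary sequence in $j$, so Birkhoff's ergodic theorem gives almost sure convergence of the Cesaro average to $\E{}{X_{0,m}\mid \mathcal{I}_m}$, which under ergodicity equals $a_m$. Dividing by $m$ and letting $m$ range produces
\[
\limsup_{N\to\infty}\frac{X_{0,Nm}}{Nm} \le \inf_m \frac{a_m}{m}=\gamma
\]
almost surely. To pass from the subsequence $Nm$ to arbitrary $n$, for $Nm\le n<(N+1)m$ split $X_{0,n}\le X_{0,Nm}+X_{Nm,n}$ by (a), and dominate $X_{Nm,n}$ by the chain of single-step increments $X_{Nm,Nm+1}+\cdots+X_{n-1,n}$; these are stationary with finite mean by (b) and (d), so another application of Birkhoff combined with $n-Nm<m$ gives $X_{Nm,n}/n\to 0$.

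For step (C), the matching lower bound $\liminf X_{0,n}/n \ge \gamma$ uses Fatou on $X_{0,n}/n+c$, which is nonnegative in expectation by (d), together with $a_n/n\to \gamma$. Combined with the almost-sure upper bound and the trivial $\liminf\le \limsup$, both equal $\gamma$ almost surely in the ergodic case. For $L^1$ convergence, one uses uniform integrability: $X_{0,n}/n$ is dominated above by the Birkhoff partial sums $(1/n)\sum_{j=1}^n X_{j-1,j}$ which converge in $L^1$, and below by a similar bound derived from (d).

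The main obstacle will be two technical points. First, the filling-in between block multiples in (B) requires the stationary sequence $X_{j-1,j}$ to have a.s.\ vanishing maxima over blocks of length $m$; since we only have $\E{}{X_{0,1}}<\infty$ rather than, say, bounded moments, this step needs care with Borel–Cantelli. Second, the non-ergodic extension: without assuming ergodicity in (c), one must replace $a_m$ by the conditional expectation $\E{}{X_{0,m}\mid \mathcal{I}_m}$, note that $\limsup X_{0,n}/n$ is itself invariant under the shift implicit in (b), and argue that this invariance forces $\limsup=\liminf$ almost surely even when the common value is a nontrivial random variable. This final identification is the delicate part of the argument, classically handled by a reverse Fatou/uniform integrability interplay.
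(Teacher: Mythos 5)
The paper does not prove this statement at all: it is quoted verbatim from the survey of Auffinger, Damron and Hanson and used as a black box, with the proof attributed to Kingman and Liggett. So there is no in-paper argument to compare against; what matters is whether your sketch would actually constitute a proof. Steps (A) and (B) are the standard ``easy half'': Fekete for the expectations, block subadditivity plus Birkhoff for $\limsup_n X_{0,n}/n \le \gamma$, and the interpolation between multiples of $m$ is handled correctly (one needs $\E{}{X_{0,1}^+}<\infty$, which follows from (d), so that $X_{n,n+1}^+/n \to 0$ a.s.). This part is fine.

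The genuine gap is in step (C). Fatou's lemma cannot deliver the almost-sure lower bound $\liminf_n X_{0,n}/n \ge \gamma$, for two separate reasons. First, condition (d) is a bound on $\E{}{X_{0,n}}$, not a pointwise bound on $X_{0,n}$, so $X_{0,n}/n + c$ need not be nonnegative and Fatou does not even apply. Second, and more fundamentally, even with pointwise domination Fatou's inequality points the wrong way: it gives $\E{}{\liminf_n X_{0,n}/n} \le \liminf_n \E{}{X_{0,n}}/n = \gamma$, which is an \emph{upper} bound on the expectation of the liminf, not a lower bound on the liminf itself. The lower bound is precisely the hard half of Kingman's theorem, and it is where Liggett's proof does all its work: one shows that $\underline{X}:=\liminf_n X_{0,n}/n$ is shift-invariant, truncates it from below, and then uses a stopping-time ``filling'' decomposition of $\{0,\dots,n\}$ into intervals on which the process nearly achieves $\underline{X}$ plus integrable single-step leftovers, to conclude $\E{}{\underline{X}} \ge \gamma$; combined with $\E{}{\overline{X}} \le \gamma$ from the easy half and $\underline{X}\le\overline{X}$, this forces $\underline{X}=\overline{X}$ a.s. Your closing remark that the identification is ``classically handled by a reverse Fatou/uniform integrability interplay'' is not accurate and does not substitute for this argument; as written, the proposal proves only one of the two inequalities needed.
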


\begin{lemma}[Exponential decay of passage times]
\label{Lem:expo_decay}
For any distribution $G$ and non-negative integers $u,v$ there exists a constant $C>0$ such that for all $k$
\[
\Prob{ T((0,0),(u,v))> k }\le C \,2^{-k}.
\]
\end{lemma}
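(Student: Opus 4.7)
The plan is to construct an explicit random path from $(0,0)$ to $(u,v)$ whose passage time has finite moment $\E{}{2^T}$, and then invoke Markov's inequality applied to $2^T$ to get $\Prob{T>k}\le 2^{-k}\E{}{2^T}$.

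First I would choose $B > t_0(G)$ large enough that $p := G([0,B)) > 1/2$; this is possible because $G([0,B)) \to 1$ as $B\to\infty$. For each $i\in\{0,\dots,u-1\}$, set $e_i := ((i,0),(i+1,0))$ and take its detour as in the definition preceding Lemma~\ref{Lem:expomoment}. Concatenating these $u$ detours from $(0,0)$ to $(u,0)$ and then appending the straight vertical segment to $(u,v)$ yields a (possibly non-simple) path from the origin to $(u,v)$, so
\begin{equation*}
T((0,0),(u,v)) \le v + \sum_{i=0}^{u-1} dt_B(e_i) \le v + uB + 2\sum_{i=0}^{u-1} k_B(e_i).
\end{equation*}
Since the detour around $e_i$ only inspects horizontal edges joining column $i$ to column $i+1$, the heights $k_B(e_0),\dots,k_B(e_{u-1})$ are independent.

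Next, from $\Prob{k_B(e)\ge j}\le (1-p)^{2(j-1)}$ together with the choice $p>1/2$ (equivalently $4(1-p)^2<1$), I obtain $\E{}{4^{k_B(e)}}<\infty$ as a convergent geometric sum. Independence of the detour heights and the pointwise bound above then give
\begin{equation*}
\E{}{2^T} \le 2^{v+uB}\bigl(\E{}{4^{k_B(e_0)}}\bigr)^u =: C < \infty,
\end{equation*}
and Markov's inequality immediately yields $\Prob{T>k}\le C\cdot 2^{-k}$.

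The main subtlety is matching the specific rate $\ln 2$ demanded by the factor $2^{-k}$: the unspecified exponent $\theta$ furnished by Corollary~\ref{cor:passagemoment} might be strictly smaller than $\ln 2$, and so cannot be fed directly into Markov to produce $2^{-k}$ decay. The resolution is that the detour construction has a tunable parameter $B$; taking $B$ so large that $G([0,B))>1/2$ is exactly what makes $\E{}{4^{k_B}}$ finite, which in turn secures the passage-time moment at rate $\ln 2$.
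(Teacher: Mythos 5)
Your proof is correct, and it takes a different route from the one in the paper. The paper does not detour each of the $u$ horizontal edges separately: it fixes a threshold $t$ with $\Prob{Y_1+\dots+Y_u>t}\le 1/2$ (where the $Y_i$ are i.i.d.\ with law $G$) and searches vertically for the nearest row $h$ in which the whole horizontal crossing from $(0,h)$ to $(u,h)$ costs at most $t$; the distance to such a row has geometric tail with ratio $1/2$, and the bound $T((0,0),(u,v))\le t+2|h|+v$ yields $2^{-k}$ decay by a direct tail computation, with the explicit constant $2^{t+v+1}$ and no exponential moments. You instead recycle the per-edge detour machinery of Lemma~\ref{Lem:expomoment} and Corollary~\ref{cor:passagemoment} (which are proved independently of this lemma, so there is no circularity) and tune the truncation level $B$ so that $4\bigl(1-G([0,B))\bigr)^2<1$, which pushes the exponential-moment rate up to $\ln 2$ before applying Markov; your identification of this tunable parameter as the resolution of the ``rate $\ln 2$'' issue is exactly right. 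The paper's argument is a bit more economical (one geometric search instead of $u$ independent ones), while yours has the side benefit of showing that in fact \emph{every} exponential moment of $T((0,0),(u,v))$ is finite, since for any $\theta$ one can choose $B$ with $G([0,B))>1-e^{-\theta}$, strengthening Corollary~\ref{cor:passagemoment}. One cosmetic point: the detour is defined in the paper only for non-deterministic $G$, but for $G=\delta_c$ your construction degenerates harmlessly ($k_B\equiv 1$ once $B>c$), so the statement for ``any distribution $G$'' is still covered.
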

Consequently, passage times have finite expectation.
\begin{proof}
Let $Y_1,...,Y_q$ be i.i.d.\ random variables with distribution $G$ and set $Y=Y_1+...+Y_u$. Then there exists $t\ge 0$ such that 
\[
\Prob{Y > t} \le \frac 1 2.
\]
For $h\in \Z$, let $\Sigma_h$ denote the sum of the edge weights of the $u$ edges along the horizontal path from $(0,h)$ to $(u,h)$. Then almost surely there exists a stopping time $T \ge 0$ such that $\min(\Sigma_T, \Sigma_{-T}) \le t$ and $T$ is minimal with that property. Note that
\[
 T((0,0),(u,v)) \le t + 2 T + v. 
\]
Since edge weights are independent, we obtain:
\begin{align*}
&\Prob{T((0,0),(u,v))> k} \le \Prob{ T > \frac{k-t-v}{2}}\\
&=\Prob{X_i > t, \, \forall \, |i|\le  \frac{k-t-v}{2}} \le \frac{1}{2^{k-(t+v+1)}}.
\end{align*}
\end{proof}

We first show the existence of a time constant for rational $v$. 

\begin{lemma}
\label{Lem:limit_multiple}
Consider rational $v=p/q$ with $p \in \Z{\ge 0}, q \in \Z^+$ and for $0\le m < n$ we define:
\[
X_{m,n}:=T((qm, pm), (qn, pn))=T((qm, \lceil vqm\rceil), (qn,  \lceil vqn\rceil)).
\]
Then \eqref{eq:mvt} and \eqref{eq:mvt2} hold. 
\end{lemma}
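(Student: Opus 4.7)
The plan is to check the four hypotheses (a)--(d) of the subadditive ergodic theorem for the array $X_{m,n}=T((qm,pm),(qn,pn))$, at which point \eqref{eq:mvt} and \eqref{eq:mvt2} are immediate consequences.

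For (a), subadditivity follows by path concatenation. If $p_1$ is any path from $(0,0)$ to $(qm,pm)$ and $p_2$ is any path from $(qm,pm)$ to $(qn,pn)$, then $p_1 \cup p_2$ is an admissible path from $(0,0)$ to $(qn,pn)$ with passage time $T(p_1)+T(p_2)$; taking infima gives $X_{0,n}\le X_{0,m}+X_{m,n}$. For (d), $\E{}{X_{0,1}}=\E{}{T((0,0),(q,p))}<\infty$ by Lemma~\ref{Lem:expo_decay}, and the lower bound $\E{}{X_{0,n}}\ge 0$ is trivial since edge weights are non-negative.

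The content lies in (b) and (c). The environment consists of i.i.d.\ random variables indexed by the horizontal edges of $\Z^2$ together with deterministic weights on the vertical edges; hence it is invariant under every integer translation of the lattice. Let $\tau$ denote the shift by the non-zero vector $(q,p)$. Then $\tau$ is measure-preserving, and because $T$ depends only on the edge weights through translation-covariant expressions we have the pathwise identity $X_{m,m+k}=X_{0,k}\circ\tau^m$. This yields the distributional equality in (b) directly and, applied to the $k$-fold iterate $\tau^k$, the stationarity in (c). Ergodicity of the stationary sequence $(X_{nk,(n+1)k})_{n\ge 0}$ follows because a non-trivial integer shift on an i.i.d.\ product measure over the horizontal-edge lattice is a Bernoulli shift, hence mixing and in particular ergodic.

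With (a)--(d) verified, the subadditive ergodic theorem yields $X_{0,n}/n\to L$ a.s.\ and in $L^1$, with $L=\inf_n \E{}{X_{0,n}}/n$ constant a.s. I do not expect a substantive obstacle: the only point deserving care is the ergodicity assertion in (c), but this reduces to the standard fact that a non-trivial coordinate shift on a product of i.i.d.\ variables is mixing. Every other step is either a direct use of the i.i.d.\ structure or a trivial estimate already available from Lemma~\ref{Lem:expo_decay}.
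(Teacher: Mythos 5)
Your proposal is correct and follows essentially the same route as the paper: subadditivity by concatenation of paths, integrability from Lemma~\ref{Lem:expo_decay}, nonnegativity for the lower bound, and stationarity/ergodicity from the i.i.d.\ horizontal-edge environment being invariant (indeed mixing) under the shift by $(q,p)$. The paper states the stationarity and ergodicity step more tersely, but your spelled-out translation-covariance argument is exactly what underlies it.
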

\begin{proof}
We apply the subadditive ergodic theorem. We established that the passage times are achieved by geodesics and the composed path of the geodesic from $(0,0)$ to $(qm, pm)$ and of the geodesic from $(qm, pm)$ to $(qn, pn)$ cannot have a passage time less than the passage time of the geodesic from the origin to $(qn, pn)$. This shows (a). Since the horizontal passage times are i.i.d. the sequence $(X_{nk,(n+1)k})_{n\ge 0}$ is stationary ergodic and (b) holds as well. We clearly have $X_{0,n}\ge 0$ and $\E{}{X_{0,1}}<\infty$ follows from Lemma~\ref{Lem:expo_decay}.\end{proof}

Next, we show that the limit 
\begin{equation}
\label{eq:aslimit}
\lim_{n\to \infty} \frac 1 n T((0,0),(n,\vn))
\end{equation}
exists almost surely. For each $n$, define $u(n)$ as the smallest integer larger or equal to $n$ which is a multiple of $q$. By subadditivity:
\[
| T((0,0),(n,\vn)) - T((0,0), (u(n), v u(n)))| \le T((n,\vn),  (u(n), v u(n)))=: Z_n. 
\]
Note that $Z_n$ equals in distribution the passage time from the origin to a point inside the rectangle $[0,...,q-1]\times [0,...,vq]$. Hence, by applying Lemma~\ref{Lem:expo_decay}, we find a constant $C>0$ uniform in $n$ such that for all $k$
\[
\Prob{ Z_n > k} \le C 2^{-k}.
\]
By Borel--Cantelli, we thus have that almost surely
\[
\lim_{n \to \infty} \frac{Z_n}{n} = 0. 
\]
Together with Lemma~\ref{Lem:limit_multiple} this shows \eqref{eq:aslimit}. The estimate also implies convergence in expectation. 

Moreover, since
\[
|T((0,0),(n,\vn))-T((0,0), (n,\wn))|\le |\vn-\wn|
\]
we have that the time constant is uniformly Lipschitz and thus continuously extends to the reals
\[|\Lambda(v)-\Lambda(w)|\le |v-w|.
\]
Hence, Theorem~\ref{Thm:time_const} holds for all $v \in [0,\infty)$. 

\subsection{Proof of Lemma~\ref{Lem:positive_time}} 

\begin{lemma}
\label{Lem:pathcounting}
Given integers  $M \ge k \ge 1$, let $S(M,k)$  be the number of distinct $k$-tuples $(a_1,...,a_k)$ of integers whose absolute values sum up to $M$. We then have that
\[
S(M,k)= \sum_{\rho=0}^{M-1} \binom{k}{\rho}\binom{M-1}{\rho}2^{\rho}+\binom{k}{M}2^{M}
\]
\end{lemma}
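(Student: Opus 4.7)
The plan is a direct combinatorial enumeration, classifying the tuples by their number of nonzero coordinates. Given a $k$-tuple $(a_1,\dots,a_k)\in\Z^k$ with $|a_1|+\cdots+|a_k|=M$ and $M\ge 1$, let $\rho=|\{i:a_i\neq 0\}|$; since $M\ge 1$ we have $1\le \rho\le \min(M,k)$. I would count tuples with a prescribed value of $\rho$ as a product of three independent choices: (i) the support, which is any size-$\rho$ subset of $\{1,\dots,k\}$, contributing $\binom{k}{\rho}$; (ii) a sign $\pm 1$ at each chosen position, contributing $2^\rho$; and (iii) an assignment of positive integer magnitudes to the chosen positions summing to $M$, which is the number of compositions of $M$ into exactly $\rho$ positive parts and equals $\binom{M-1}{\rho-1}$.

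Every admissible tuple arises from exactly one such triple---its support, its signs on the support, and the magnitudes on the support are intrinsic data of the tuple itself---so this is a genuine bijection and summing over the admissible values of $\rho$ gives
$$S(M,k)=\sum_{\rho=1}^{\min(M,k)}\binom{k}{\rho}\binom{M-1}{\rho-1}\,2^{\rho}.$$
To recover the form stated in the lemma I would isolate the terminal $\rho=M$ term, which contributes $\binom{k}{M}\binom{M-1}{M-1}\,2^{M}=\binom{k}{M}\,2^{M}$ (automatically zero when $k<M$), and reindex the remaining sum; the two conventions $\binom{M-1}{-1}=0$ and $\binom{k}{M}=0$ for $k<M$ make the hypothesis $M\ge k\ge 1$ cosmetic rather than essential.

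The argument is purely combinatorial bookkeeping, so I do not expect any substantive obstacle; the only point to check carefully is that in step (iii) the chosen positions are treated as an ordered sequence (via their induced order in $\{1,\dots,k\}$), so that a composition---rather than a partition---of $M$ is what counts the assignments of magnitudes.
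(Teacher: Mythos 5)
Your classification by support size is correct as far as it goes: under the reading ``$|a_1|+\dots+|a_k|$ equals $M$'' the count is indeed $\sum_{\rho=1}^{\min(M,k)}\binom{k}{\rho}\binom{M-1}{\rho-1}2^{\rho}$. The gap is the final step. Isolating the $\rho=M$ term and shifting the index turns the remaining sum into one with summands $\binom{k}{\rho+1}\binom{M-1}{\rho}2^{\rho+1}$, not the summands $\binom{k}{\rho}\binom{M-1}{\rho}2^{\rho}$ appearing in the lemma, and no conventions about vanishing binomial coefficients can reconcile them, because the two quantities are genuinely different numbers: for $M=2$, $k=1$ your (correct, for the exact-sum reading) count is $2$, while the displayed right-hand side equals $1+2=3$; for $M=k=2$ it is $8$ versus $9$. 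So the proposal proves a valid formula for a different quantity and then asserts, without justification, an algebraic identity that is in fact false; this is where the argument fails.

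The mismatch stems from how the statement is read. The paper's own proof treats $S(M,k)$ as counting tuples whose absolute values sum to \emph{at most} $M$: it classifies by the number $\rho$ of nonzero coordinates \emph{including} $\rho=0$, chooses the support and signs exactly as you do, but replaces your step (iii) by counting $\rho$-tuples of positive magnitudes with sum at most $M$ via stars and bars ($M$ stars, $\rho$ bars placed in the $M-1$ internal gaps, the final block of stars being a discarded remainder); that is the origin of the factor $\binom{M-1}{\rho}$, of the $\rho=0$ term, and of the separate boundary term $\binom{k}{M}2^{M}$. This ``at most'' reading is also the one used downstream, where the lemma serves only as an upper bound on the number of semi-directed paths with at most a prescribed number of vertical edges (Corollary~\ref{Cor:countpaths} and the proof of Lemma~\ref{Lem:positive_time}). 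One caveat: the stars-and-bars as described actually counts magnitude sums at most $M-1$ (the count for ``at most $M$'' would be $\binom{M}{\rho}$), so even under that reading the displayed expression is not an exact identity; it is harmless in the applications, which only need bounds of this shape, but it explains why you could not derive the stated formula from your exact-sum enumeration. The concrete fix for your write-up is to adopt the ``at most'' interpretation, include $\rho=0$, and redo step (iii) accordingly.
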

\begin{proof}
Let $\rho \in \{0,...,M\} $ be the number of $a_i$ that are non-zero. The problem reduces to count the number of distinct tuples $(b_1,...,b_\rho)$ of positive integers that sum up to at most $M$. We multiply this number with the number of choices to select $\rho$ indices from $\{1,...,k\}$ and the signs of the $a_i$. 

We use a stars and bars argument to show that the number of values for the $b_i$ is indeed $\binom{M-1}{\rho}$ for $\rho<M$. So given $M$ stars, select $\rho$ bars out of the $M-1$ possible positions between the stars. The number of stars between two consecutive bars then corresponds to $b_1,...,b_\rho,M-\sum_i b_i$, respectively.
\end{proof}
An application of this lemma is a bound on the number of semi-directed paths with plausible passage time. 
\begin{corollary}
\label{Cor:countpaths}
Let $G$ be arbitrary and $C\ge 1$. There are at most $(2Ce)^{n+1}$ semi-directed paths from the origin to a point $(n,m)$ with passage time at most $Cn$. 
\end{corollary}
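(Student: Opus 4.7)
The plan is to lift the counting problem to the tuple encoding introduced in Section~\ref{Sec:equiv_model}. A semi-directed, non-intersecting path $p$ from the origin to $(n, m)$ is in bijection with a tuple $\gamma = (\gamma_0, \ldots, \gamma_{n+1}) \in \Z^{n+2}$ satisfying $\gamma_0 = 0$, $\gamma_{n+1} = m$; setting $a_k := \Delta_k \gamma$, the decomposition \eqref{alt:time} gives
\[
T(p) \;=\; \sum_{k=0}^n |a_k| + \sum_{k=1}^n F_k(\gamma_k) \;\ge\; \sum_{k=0}^n |a_k|,
\]
since every horizontal weight $F_k$ is non-negative and each vertical edge contributes $1$. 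Any path with $T(p) \le Cn$ therefore arises from some tuple $(a_0, \ldots, a_n) \in \Z^{n+1}$ with $\sum_k |a_k| \le \lfloor Cn \rfloor$, and since we seek an upper bound we may freely drop the endpoint constraint $\sum_k a_k = m$.

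By Lemma~\ref{Lem:pathcounting} the number of such tuples with $\sum_k |a_k| = M$ is $S(M, n+1)$, so the count of paths is at most $\sum_{M=0}^{\lfloor Cn\rfloor} S(M, n+1)$. Next I would combine the outer summation in $M$ with the inner summation in $\rho$ from Lemma~\ref{Lem:pathcounting} via a hockey-stick identity, rewriting the partial sum as $\sum_{\rho=0}^{n+1}\binom{n+1}{\rho}\binom{\lfloor Cn\rfloor}{\rho}2^{\rho}$, i.e.\ the number of lattice points in the $\ell^1$-ball of radius $\lfloor Cn\rfloor$ in $\Z^{n+1}$. A direct application of the standard estimate $\binom{N}{k}\le (Ne/k)^k$ to the dominant summand (located at the boundary $\rho = n+1$ since $C\ge 1$) gives $\binom{\lfloor Cn\rfloor}{n+1}\,2^{n+1} \le (Ce)^{n+1}\cdot 2^{n+1} = (2Ce)^{n+1}$.

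The main obstacle will be absorbing the remaining terms, as naive binomial estimates tend to produce constants of the form $2(C+1)e$ rather than the tighter $2Ce$. A cleaner route is the Chernoff-style bound derived from the generating function
\[
\sum_{M\ge 0} S(M, n+1)\,x^M \;=\; \Bigl(\frac{1+x}{1-x}\Bigr)^{n+1},
\]
which for any $x\in(0,1)$ yields $\sum_{M\le Cn} S(M, n+1) \le x^{-Cn}\bigl((1+x)/(1-x)\bigr)^{n+1}$; optimising $x$ then extracts the claimed exponential rate up to subexponential corrections, which can be absorbed into the factor $(2Ce)^{n+1}$.
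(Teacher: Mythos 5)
Your reduction is exactly the paper's: encode a (non-self-intersecting) semi-directed path by its vertical jump profile, use that the number of vertical edges is a lower bound for the passage time, and count profiles via Lemma~\ref{Lem:pathcounting}; dropping the endpoint constraint is legitimate for an upper bound. The genuine gap is in your final quantitative step. First, the claim that the dominant summand sits at the boundary $\rho=n+1$ fails for $C$ near $1$: for $C=1$ that term is $\binom{n}{n+1}2^{n+1}=0$, and the sum $\sum_\rho\binom{n+1}{\rho}\binom{n}{\rho}2^\rho$ is dominated by interior values $\rho\approx(2-\sqrt2)n$. Second, the Chernoff/generating-function route does not repair this: optimizing $x^{-Cn}\bigl(\tfrac{1+x}{1-x}\bigr)^{n+1}$ at $C=1$ gives $x=\sqrt2-1$ and exponential rate $3+2\sqrt2\approx 5.83$, strictly larger than $2e\approx 5.44$. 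The discrepancy is exponential in $n$, not subexponential, so it cannot be ``absorbed into $(2Ce)^{n+1}$''; a smaller but still exponential excess persists for every fixed $C\ge1$.

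Moreover, the obstruction is not an artifact of your estimates. For $G=\delta_0$ the passage time of a path equals its number of vertical edges, so the number of paths to $(n,m)$ with passage time at most $n$ is exactly the number of profiles $(a_0,\dots,a_n)\in\Z^{n+1}$ with $\sum_k|a_k|\le n$ and $\sum_k a_k=m$; summing over $m$ gives a Delannoy-type count of order $(3+2\sqrt2)^{n}$ up to polynomial factors, so for some $m$ the count eventually exceeds $(2e)^{n+1}$. Hence no counting argument along these lines can reach the base $2Ce$, and the constant in the corollary as printed appears to be slightly too small; the paper's own one-line proof (same route: jumps, Lemma~\ref{Lem:pathcounting}, a binomial inequality) is subject to the same issue, so the difficulty you flagged is real rather than a defect of your write-up. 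What the method does give cleanly is the bound $2^{n+1}\binom{\lfloor Cn\rfloor+n+1}{n+1}\le\bigl(2(C+1)e\bigr)^{n+1}$ (choose signs, then stars and bars), and this weaker base suffices wherever the corollary is invoked, e.g.\ in Section~\ref{ss:cont}, where one only needs some explicit exponential base against which $q$ is then chosen small.
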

\begin{proof}
A semi-directed path is defined by its vertical jumps and the number of vertical edges is a lower bound for the passage time. We then apply the lemma and a standard inequality for the binomial coefficient. 
\end{proof}

Since $\Lambda(v)\ge v+t_0(G)$, we are left with the case $v=0$ and $t_0=0$. Clearly, if $G=\delta_x$, then $\Lambda(0)=x$. So assume by contradiction that $G$ is not deterministic and $\Lambda(0)=0$. Then there exists $\tau_1>\tau_0(G)$ such that $p:=G( [\tau_0,\tau_1))\in (0,1)$. Let us now look at all possible geodesics $\gamma(0,(n,0))$ and show that it is very unlikely for them to have a low passage time. 

Let $L_\eps$ be the event that the geodesic from $(0,0)$ to $(n,0)$ contains at most $2\eps n$ vertical edges and at most $\eps n$ horizontal edges with passage time $\tau_1$ or higher.  By Theorem~\ref{Thm:time_const}, it suffices to take a union bound over all semi-directed paths $p$ that satisfy these properties. 

Since $p$ is defined by its vertical jumps, we apply Lemma~\ref{Lem:pathcounting} to count all semi-directed paths with at most $2\eps n$ vertical edges. We then multiply this with the number of ways to choose $(1-\eps)n$ edges with weights less than $\tau_1$. Each instance of a path and edge weight configuration then has probability at most $p^{(1-\eps)n}$. Thus, we get (dropping rounding to integers for the sake of readability)
\begin{align*}
\Prob{E_\eps} &\le S(n, 2\eps n) \binom{n}{\eps n} p^{(1-\eps)n}
\le \sum_{\rho=0}^{2\eps n} \binom{n}{\rho}\binom{2\eps n}{\rho}2^\rho \cdot \binom{n}{\eps n}p^{n/2} \\
&\le (2 \eps n+1) \binom{n}{2 \eps n}\binom{2\eps n}{\eps n}2^{\eps n} \binom{n}{\eps n}p^{n/2} 
\le n (2e^3 \eps^{-3})^{\eps n} p^{n/2}.
\end{align*}
As $\eps\to 0$, we have $(2e^3 \eps^{-3})^{\eps}\to 1 < p^{-1/2}$. So for small enough $\eps$, the expression above goes to zero as $n\to \infty$ and hence we must have had $\Lambda(0)>0$. 

\subsection{Continuity, proof of Lemma~\ref{Lem:continuity}}\label{ss:cont}
\begin{lemma}
\label{Lem:beta}
\looseness -1
Let $G$ have unbounded support and let $\rho(\gamma^n, B)$ be the number of horizontal edges with weights greater than $B$ in the geodesic $\gamma^n:=\gamma((0,0),(n,\vn))$. For all $\beta>0$ there exists  $B>0$ such that $\lim_{n \to \infty} \Prob{\rho(\gamma^n,B)>\beta n}=0$. 
\end{lemma}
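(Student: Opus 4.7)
The plan is to observe that a geodesic simply cannot afford to use too many very heavy horizontal edges, because each such edge already contributes a large amount to its passage time. First, I would use the trivial bound that every horizontal edge $e\in \gamma^n$ with $\tau_e>B$ contributes at least $B$ to the sum defining $T(\gamma^n)$. Dropping the contribution of all other edges, this gives on the deterministic level the inclusion
\[
\{\rho(\gamma^n,B)>\beta n\}\subset \{T(\gamma^n)>\beta B n\}=\{T((0,0),(n,\vn))>\beta B n\},
\]
where we used that $\gamma^n$ is a geodesic.

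Next, I would choose $B$ large enough to pin the right-hand event into the exponentially small regime for the passage time. Concretely, picking any $B>\beta^{-1}(\Lambda(v)+1)$ yields
\[
\Prob{\rho(\gamma^n,B)>\beta n}\le \Prob{T((0,0),(n,\vn))>n(\Lambda(v)+1)}.
\]
By Theorem~\ref{Thm:time_const}, the normalized passage time converges almost surely to $\Lambda(v)$, so the right-hand probability tends to $0$; if one wants a quantitative rate, Theorem~\ref{Thm:upper_deviation} gives exponential decay in $n$. This directly proves the claim.

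I do not anticipate a genuine obstacle: the argument is essentially a one-line consequence of the large deviation bound on $T((0,0),(n,\vn))$, since heavy horizontal edges are prohibitively expensive for a minimizer. The hypothesis that $G$ has unbounded support is not really used in the proof itself; it only ensures that the statement is non-trivial, because for bounded $G$ one could just take $B$ larger than $\sup\mathrm{supp}(G)$ and obtain $\rho(\gamma^n,B)=0$ deterministically. The only mild subtlety is that $B$ depends on $\beta$ (and on $v$, through $\Lambda(v)$), which is exactly what the statement of the lemma allows.
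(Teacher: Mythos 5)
Your argument is correct, but it is a genuinely different route from the paper's. You use that the flagged edges are \emph{expensive}: on the event $\rho(\gamma^n,B)>\beta n$ one has $T((0,0),(n,\vn))=T(\gamma^n)\ge B\,\rho(\gamma^n,B)>\beta B n$, so choosing $B>\beta^{-1}(\Lambda(v)+1)$ (legitimate, since $\Lambda(v)<\infty$ for every $G$ by Theorem~\ref{Thm:time_const}, and non-circular, since neither Theorem~\ref{Thm:time_const} nor Theorem~\ref{Thm:upper_deviation} depends on this lemma or on Lemma~\ref{Lem:continuity}) reduces the claim to the upper-tail event $T((0,0),(n,\vn))>n(\Lambda(v)+1)$, which vanishes by Theorem~\ref{Thm:time_const} and is in fact exponentially small by Theorem~\ref{Thm:upper_deviation}. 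The paper instead uses that the flagged edges are \emph{rare}: it resets each horizontal weight to $1$ if $\tau_e\le B$ and to $0$ if $\tau_e>B$, argues that many heavy edges on the geodesic would produce a path of abnormally low passage time in this auxiliary Bernoulli model, and derives a contradiction from Lemma~\ref{Lem:cnt_ber} (which rests on Cox's continuity theorem for classical FPP) once $G((B,\infty))$ is small. Your proof is more elementary and self-contained: it needs no auxiliary model, no external continuity input, it gives an exponential rate for free, and it sidesteps the control of the number of vertical edges of $\gamma^n$ that the comparison argument implicitly requires when bounding the modified passage time by $(v+1-\beta)n$. What the paper's route buys is robustness of a different kind: it only uses that the marked edges form a low-probability class, so the same scheme bounds the density along the geodesic of rare edges even when they are not costly; for the lemma as stated this generality is unnecessary. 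Your observations that $B$ may depend on $\beta$ and $v$, and that unbounded support is inessential, are both accurate.
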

\begin{proof}
Define new edge weights $e'_i$ for the lattice where $e'_i=1$ if the original edge weight $e_i$ is at most $B$ and set $e'_i=0$ otherwise. So if the event $\rho(\gamma^n,B)>\beta n$ occurs, then the passage time in the modified model is at most $(v+1-\beta)n$. However by Lemma~\ref{Lem:cnt_ber}, there exists $B$ such that the time constant in this Bernoulli model is at least $v+1-\beta/2$. The lemma then follows from Theorem~\ref{Thm:time_const}.
\end{proof}

Now let us turn to the proof of Lemma~\ref{Lem:continuity}. In FPP the proof of continuity goes back to Cox and Kesten \cite[Lemma 2]{coxkesten}. Let $\eps>0$. Our goal is to show that for some $B>0$ the passage times of $G^B$ and $G$ to $(n,\vn)$ differ by at most $\eps n$ for large $n$ and with a probability uniformly bounded away from zero. Then convergence of the time constant follows from Theorem~\ref{Thm:time_const}. 
We do a coupling of an edge weight environment $\{\tau_i\}$ with distribution $G$ with one with distribution $G^B$ by defining $\tau_i':=\min(\tau_i, B)$. Let $T(p)$ denote the passage time in the original environment and $T^B(p)$ the passage time with the $\tau_i'$ weights instead. Let $\gamma$ and $\gamma^B$ be the geodesic from the origin to $(n,\vn)$, respectively in the original and the truncated environment. 

For any semi-directed path $p$ with horizontal edges $e_1,..,e_n$ we construct a modified path $d(p)$ as follows: For any edge $e_i$ with $\tau(e_i)\ge B$, replace this edge by the detour of $e_i$ until we find an edge with weight strictly less than $B$. The passage time then increases by at most $2k_{B}(e_i)$. 

Note that
\[
T^B(\gamma^B)\le T^B(d(\gamma^B))=T(d(\gamma^B)), \text{and } T^B(\gamma^B)\le T(\gamma)\le T(d(\gamma^B)).
\]
So it suffices to show that the probability of the event $T^B(d(\gamma^B))-T^B(\gamma^B)>\eps n$ goes to zero as $n\to\infty$. Let $\mathcal C$ be the set of all semi-directed paths to $(n,\vn)$ that have at most $2\Lambda(v)n$ vertical edges. By Theorem~\ref{Thm:time_const}, the event $\gamma^B \not \in \mathcal C$ goes to zero for large $n$. 
So it suffices to take a union bound over all paths $p\in \mathcal C$. For any such path $p$ and $i \in \{1,...,n\}$, set $a_i=0$ if $\tau^B(e_i)<B$ and $a_i=2|k_B(e_i)|$ otherwise. 
Define $\mathcal A$ as the set of tuples $(a_1,...,a_n)$ of non-negative numbers satisfying the following conditions:
\begin{itemize}
\item $S:=\sum a_i \ge \eps n$
\item $ \rho:=| i : \,  a_i \ge 1 | \le \beta n := \eps n/2$.
\end{itemize}
We may assume the second condition because of Lemma~\ref{Lem:beta}. Define $q=G([B,\infty))$. To count the set, we use the same combinatorial argument as in Lemma~\ref{Lem:pathcounting} to count all ways to sum $\rho$ positive integers to (at most) $S$ and multiply them by the number of ways to choose $\rho$ of $n$ indices with $a_i\ge 1$. A union bound then yields:
\begin{align*}
&\Prob{T^B(d(\gamma^B))-T^B(\gamma^B)>\eps n, \gamma^B \in C, \rho(\gamma^B,B)\le \beta n }
\le \sum_{p \in \mathcal C} \sum_{\mathcal A} \prod_{i=1}^n q^{a_i} \\
&\le \sum_{p \in \mathcal C} \sum_{S=\eps n}^\infty \sum_{\rho=1}^{\beta n} \binom{n}{\rho} \binom{S-1}{\rho} q^S 
\le \sum_{p \in \mathcal C} \sum_{S=\eps n}^\infty (\beta n) \binom{n}{\beta n}\binom{S}{\beta n} q^S \\
&\le \sum_{p \in \mathcal C} \beta n \left(\frac{e}{\beta n}\right)^{\beta n} \sum_{S=\eps n}^\infty S^{\beta n} q^S 
\le   \sum_{p \in \mathcal C} \beta n \left(\frac{e}{\beta n}\right)^{\beta n} (\beta n)! \sum_{S=\eps n}^\infty  (qe)^S \\
&\le   \sum_{p \in \mathcal C} 6 \beta n^2  \sum_{S=\eps n}^\infty  (qe)^S \le (4\Lambda(v)e)^{n+1}\frac{6 \beta n^2}{qe(1-qe)}  (qe)^{\eps n}
\end{align*}
We applied Corollary~\ref{Cor:countpaths} in the last step. We conclude that the probability goes to zero if $q<(4\Lambda(v)e^2)^{-1/\eps}$ which is guaranteed for $B$ large enough. 

\subsection{Right-tail large deviations, proof of Theorem~\ref{Thm:upper_deviation}}\label{ss:right_dev}

This theorem and its rather simple proof originate from Grimmett and Kesten \cite[Theorem~3.2]{grimmettkesten}. To simplify the notation, define $t_n(v)=T((0,0),(n,\vn))$. Given $\eps>0$ and $k\in \{0,...,\lceil 3v_0/\eps\rceil\}$, by Theorem~\ref{Thm:time_const} there exists $N(k)$ such that
\[
\frac 1 m \E{}{t_m(k\eps/2)}\le \Lambda(k\eps/3)+\eps/3 \quad \forall \, m\ge N(k).
\]
Set $N:=\max_k N(k)$. Since $|t_n(v)-t_n(w)|\le |v-w|n+1$, we can conclude 
\begin{equation}
\label{ineq:tnv}
\frac 1 N \E{}{t_N(v)}\le \Lambda(v)+\eps \quad \forall \, v\in [0,v_0].
\end{equation}
Let us assume $n=rN$ for some integer $r$. Define $Y_i=T((N(i-1), \lceil v N \rceil (i-1)), (Ni, \lceil v N \rceil i))$ for $i=1,...,r$. These variables are independent and distributed like $Y_1=t_N(v)$. Also let $Z_i=Y_i-\E{}{Y_i}$. Using subadditivity and \eqref{ineq:tnv} we conclude
\begin{align*}
\Prob{T((0,0),(n, \lceil v N \rceil r))> n (\Lambda(v)+2\eps)} &\le \Prob{Y_1+...Y_r > n(\Lambda(v)+2\eps)} \\
\le \Prob{Z_1+...+Z_r> rN\eps} &\le \left( \frac{\E{}{e^{\xi Z_1}}}{e^{N\eps\xi}}\right)^r
\end{align*}
where $\xi>0$. By Lemma~\ref{Lem:expomoment},  $\E{}{e^{\xi t_1(0)}}<\infty$ for sufficiently small $\xi$. Since $t_N(v)$ is stochastically dominated by $(v_0+t_1(0))N$, we have that $\E{}{e^{\xi Y_1}}$ is uniformly bounded in $v$. Since also $\E{}{Z_1}=0$ we have $\E{}{e^{\xi Z_1}}=1+o(\xi)$. Hence, choosing $\xi$ small enough yields $\E{}{e^{\xi Z_1}}<e^{N\eps\xi}$ and the probability above decreases exponentially in $n$. 

Finally, for general $n' \in \N$, there exists $n\le n'$ such that $n'-n<N$. By subadditivity,
\begin{align*}
&\Prob{t_n(v)>n(\Lambda(v)+3\eps)}\\ \le   \; & \Prob{T((0,0),(n, \lceil v N \rceil r))> n (\Lambda(v)+2\eps)}+\Prob{T((n, \lceil v N \rceil r), (n',\lceil v'n\rceil))>\eps n}
\end{align*}
The two points of the summand on the right lie in a box of constant size $N\times (N+2)v_0$. By Lemma~\ref{Lem:expomoment} and a Chernoff bound, the probability of their passage time being large also decreases exponentially in $n$. 

\subsection{Concentration inequalities}\label{ss:concentration}

While Theorem~\ref{Thm:lower_deviation_dir} can be shown with a standard but somewhat tedious block argument going back to Kesten, we use a shorter and more modern entropy argument to show a concentration bound first from which the theorem will easily follow.

\begin{theorem}[Concentration bound]
\label{Thm:concentration}
Let $G$ have finite variance. Then there exists a constant $C>0$ such that for all $x=(x_1,x_2) \in \Z^d$ and $t\ge 0$:
\[\Prob{|T(0,x)-\E{}{T(0,x)}|\ge t \sqrt{x_1}}\le e^{-Ct^2}.
\]
\end{theorem}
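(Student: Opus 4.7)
I would prove the bound by the entropy (modified log-Sobolev) method, treating $T=T(0,x)$ as a function of the i.i.d.\ horizontal edge weights. Enumerate the horizontal edges of $\Z^2$ as $e_1,e_2,\dots$ with weights $\tau_i\sim G$, denote by $\gamma$ the geodesic from $0$ to $x$, and for each $i$ let $\tau_i'$ be an independent copy of $\tau_i$ and $T^{(i)}$, $\gamma^{(i)}$ the passage time and geodesic in the swapped environment. Since $T=\inf_p T(p)$ is a minimum of linear functions of the weights, plugging the original geodesic into the new environment and vice versa yields the pointwise influence bound
\[
|T-T^{(i)}|\le |\tau_i-\tau_i'|\cdot\bigl(\1(e_i\in\gamma)\vee \1(e_i\in\gamma^{(i)})\bigr).
\]

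\textbf{Entropy step.} The key structural feature of our model is that every semi-directed geodesic from the origin to $(x_1,x_2)$ traverses \emph{exactly} $x_1$ horizontal edges, so that $\sum_i \1(e_i\in\gamma)=x_1$ deterministically and the same holds for $\gamma^{(i)}$. Combined with the influence bound and $\E{}{(\tau_i-\tau_i')^2}=2\sigma^2$ where $\sigma^2:=\text{Var}(G)$, this yields
\[
\sum_i \E{}{(T-T^{(i)})^2}\le C\, x_1.
\]
I would feed this estimate into a Boucheron--Bousquet--Lugosi--Massart-type modified log-Sobolev inequality: for $\lambda>0$,
\[
\Ent\bigl(e^{\lambda T}\bigr)\le \sum_i \E{}{e^{\lambda T}\,\psi\bigl(\lambda(T-T^{(i)})\bigr)}
\]
for a suitable Bernstein-type $\psi$. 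Herbst's integration then produces $\log \E{}{e^{\lambda(T-\E{}{T})}}\le C'\lambda^2 x_1$, and Chernoff gives the Gaussian tail $\Prob{|T-\E{}{T}|\ge t\sqrt{x_1}}\le e^{-Ct^2}$.

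\textbf{Main obstacle.} The genuine difficulty is the possible unboundedness of $G$: the clean versions of the modified log-Sobolev inequality that yield sub-Gaussian tails usually require bounded or exponentially integrable coordinates, whereas we only have a second moment. I would handle this by truncating each horizontal weight at a level $M$, running the entropy argument on the truncated passage time $T_M$ (for which each coordinate is bounded by $M$ and the martingale/Bernstein machinery applies cleanly), and separately controlling $T-T_M$ via the detour construction of Lemma~\ref{Lem:expomoment}. Along any geodesic only a tail-controlled number of weights can exceed $M$, and each overshoot can be rerouted at a cost whose distribution has exponential moments, so the contribution of $T-T_M$ to the fluctuations remains on the Gaussian scale $\sqrt{x_1}$. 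Uniformity in $x_2$ comes for free since the vertical edges carry no randomness: the sum of squared influences is governed purely by the $x_1$ horizontal edges that the geodesic must cross.
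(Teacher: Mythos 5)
Your core strategy coincides with the paper's: tensorize entropy over the horizontal edge weights, bound the influence of an edge $e_i$ through the event $e_i\in\gamma$, use the fact that a semi-directed geodesic to $(x_1,x_2)$ crosses exactly $x_1$ horizontal edges, and finish with Herbst's integration and a Chernoff bound. The divergence is in how unboundedness of $G$ is handled, and there your argument has a real gap. The paper never truncates: it applies the symmetrized log-Sobolev inequality with $q(u)=u(e^u-1)$, bounds $(T_i'-T)_+\le \tau_{e_i}'\,\1(e_i\in\gamma)$, where the resampled weight $\tau_{e_i}'$ is independent of both $T$ and $\gamma$, and notes that in the case it works out ($\lambda<0$; the other sign is quoted from \cite[Section~3.4.2]{50years}) the function $q$ is evaluated at a nonpositive argument, where $q(u)\le u^2$. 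Hence only $\E{}{\tau_e^2}$ enters and finite variance suffices with no boundedness or truncation at all.

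Your alternative route---truncate at $M$, run the bounded-coordinate inequality for $T_M$, and control $T-T_M$ by detours---leaves its second half unproved, and it is not a routine step. You would need a genuinely sub-Gaussian bound $\Prob{(T-T_M)-\E{}{T-T_M}\ge t\sqrt{x_1}}\le e^{-Ct^2}$ for all $t$, but $T-T_M$ is a sum of detour penalties over a random, weight-dependent collection of edges (the heavy edges met by the truncated geodesic), and the only input Lemma~\ref{Lem:expomoment} provides is exponential moments; a Chernoff bound on such a sum gives tails of order $e^{-ct\sqrt{x_1}}$, which is weaker than $e^{-Ct^2}$ once $t\gg\sqrt{x_1}$, so the assertion that this contribution ``remains on the Gaussian scale'' does not follow as stated. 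Relatedly, your symmetric influence bound with $\1(e_i\in\gamma)\vee\1(e_i\in\gamma^{(i)})$ and the estimate $\sum_i\E{}{(T-T^{(i)})^2}\le Cx_1$ is an Efron--Stein-type input that by itself only yields a variance bound; to get exponential concentration under a mere second-moment assumption you need the one-sided positive-part structure $(T_i'-T)_+\le\tau_{e_i}'\,\1(e_i\in\gamma)$ that the paper exploits. In short: the entropy skeleton is right, but the truncation detour is both harder than you indicate and unnecessary.
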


A central object is entropy. For a non-negative random variable $X$ we define

\[
\Ent\, X =\E{}{X \log X}- \E{}{X}\log \E{}{X}.
\]

Our goal is to show the following claim:

\textbf{Claim 2:} Write $T=T(0,x)$. There exists some constant $C>0$ such that for all $\lambda \in \R$
\begin{equation}
\label{ineq:cl}
\Ent\, e^{\lambda T}\le C \|x\|_1 \lambda^2 \E{}{e^{\lambda T}}. 
\end{equation}

The following arguments can be found in \cite{50years}. We will treat the $\lambda<0$ case, the argument for $\lambda>0$ is nearly identical and can be found  in \cite[Section~3.4.2]{50years}. First, we prove Theorem~\ref{Thm:concentration} using this claim. We use the Herbst argument. Set $\psi(\lambda)=\log \E{}{e^{\lambda(T-\E{}{T}}}$. One can check that
\[
\frac{\Ent\, e^{\lambda T}}{\lambda^2 \E{}{e^{\lambda T}}}=\frac d {dy} \left(\frac{\psi(\lambda)}{\lambda}\right).
\]
Integrating \eqref{ineq:cl} then yields
\[
\psi(\lambda)\le C \|x\|_1 \lambda^2. 
\]
So for $\lambda <0$ one has using Markov's inequality
\begin{align*}
&\Prob{T-\E{}{T}\le -t\sqrt{\|x\|}}=\Prob{e^{\lambda(T-\E{}{T})}\ge e^{-t\lambda\sqrt{\|x\|}}}\\
&\le e^{\psi(\lambda)+t\lambda\sqrt{\|x\|}} \le e^{C\|x\|\lambda^2+t\lambda\sqrt{\|x\|}}. 
\end{align*}
Choosing $\lambda=-t/(2C \sqrt{\|x\|})$ then completes the proof. 
 
Now let us prove the claim. Enumerate the \emph{horizontal} edge weights of $\Z^2$ by $\tau_{e_1},\tau_{e_2},...$. Let $\mathbb{E}_{i}$ be the expectation operator conditioned on $\{\tau_{e_j}\}_{j\neq i}$. For any measurable, non-negative function $X$ of the edge weights we define the conditional entropy
\[
\Ent_i\, X =\E{i}{X \log X}- \E{i}{X}\log \E{i}{X}.
\]
By tensorization of entropy (see \cite[Theorem~4.22]{concentration}), we have
\begin{equation}
\label{eq:tensor}
\Ent\, e^{\lambda T}\le \sum_{i=1}^\infty \E{}{\Ent_{i}\, e^{\lambda T}}.
\end{equation}

\begin{lemma}[Symmetrized LSI, \cite{concentration}]
Let $q(x)=x(e^x-1)$. Let $Z$ and $Z'$ be two i.i.d.\ random variables. Then for all $\lambda \in \R$,
\[
\Ent\, e^{\lambda Z} \le \E{}{e^{\lambda Z}q(\lambda(Z'-Z)_+)}.
\]
\end{lemma}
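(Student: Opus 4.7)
The plan is to reduce the bound to the standard Jensen-type inequality for entropy and then to use the explicit form $f(Z)=e^{\lambda Z}$ together with the symmetry between the two i.i.d.\ copies $Z,Z'$. Throughout I would assume $f>0$ (which is fine since $f(Z)=e^{\lambda Z}$), and all expectations in what follows are with respect to the independent pair $(Z,Z')$.

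First, I would establish the \emph{unsymmetrized} bound
\[
\Ent\, f(Z) \le \E{}{f(Z)\bigl(\log f(Z)-\log f(Z')\bigr)}.
\]
The proof is a one-line Jensen argument: by concavity of $\log$ and independence of $Z,Z'$,
\[
\E{}{f(Z)}\log \E{}{f(Z)} = \E{}{f(Z)}\log \E{}{f(Z')} \ge \E{}{f(Z)}\,\E{}{\log f(Z')} = \E{}{f(Z)\log f(Z')},
\]
and subtracting from $\E{}{f(Z)\log f(Z)}$ gives the claim. Swapping the roles of $Z$ and $Z'$ in this inequality and averaging yields the symmetric form
\[
\Ent\, f(Z) \le \tfrac 1 2 \E{}{(f(Z)-f(Z'))(\log f(Z)-\log f(Z'))}.
\]

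Next, specialize to $f(Z)=e^{\lambda Z}$. Writing $a:=\lambda(Z'-Z)$, one computes
\[
(f(Z)-f(Z'))(\log f(Z)-\log f(Z')) = e^{\lambda Z}(1-e^a)\cdot(-a) = e^{\lambda Z}\,a(e^a-1) = e^{\lambda Z}\,q(a),
\]
so that
\begin{equation}\label{eq:sympre}
\Ent\, e^{\lambda Z} \le \tfrac 1 2 \E{}{e^{\lambda Z} q(\lambda(Z'-Z))}.
\end{equation}

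The final step is to convert the symmetric expression on the right of \eqref{eq:sympre} into the one-sided quantity $\E{}{e^{\lambda Z}q(\lambda(Z'-Z)_+)}$. Observe the elementary identity
\[
q(-a)e^a = -a(e^{-a}-1)e^a = a(e^a-1) = q(a).
\]
Splitting the expectation in \eqref{eq:sympre} according to the sign of $Z'-Z$ and using the $Z \leftrightarrow Z'$ swap together with the identity above gives
\[
\E{}{e^{\lambda Z}q(\lambda(Z'-Z));\,Z'<Z} \;=\; \E{}{e^{\lambda Z'}q(-\lambda(Z'-Z));\,Z'>Z} \;=\; \E{}{e^{\lambda Z}q(\lambda(Z'-Z));\,Z'>Z},
\]
so the two halves of the symmetric expression agree, and
\[
\tfrac 1 2 \E{}{e^{\lambda Z}q(\lambda(Z'-Z))} \;=\; \E{}{e^{\lambda Z}q(\lambda(Z'-Z));\,Z'\ge Z} \;=\; \E{}{e^{\lambda Z}q(\lambda(Z'-Z)_+)},
\]
the last equality using $q(0)=0$. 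Combined with \eqref{eq:sympre} this yields the desired bound. The only subtle point is the bookkeeping in this last reduction: nothing is really hard, but one must verify that the identity $q(-a)e^a=q(a)$ is exactly what converts the $Z'<Z$ contribution into the $Z'>Z$ contribution after a relabeling, so that the factor of $1/2$ disappears.
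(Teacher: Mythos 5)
Your proof is correct. Note that the paper does not prove this lemma at all—it simply cites Boucheron–Lugosi–Massart—so there is no internal argument to compare against; your derivation is essentially the standard textbook one: the Jensen (variational) bound $\Ent f(Z)\le \E{}{f(Z)(\log f(Z)-\log f(Z'))}$, symmetrization over the exchangeable pair to get $\tfrac12\E{}{(f(Z)-f(Z'))(\log f(Z)-\log f(Z'))}$, and the identity $q(-a)e^{a}=q(a)$ with the swap $Z\leftrightarrow Z'$ to collapse the symmetric expression onto the event $\{Z'>Z\}$, which is exactly $\E{}{e^{\lambda Z}q(\lambda(Z'-Z)_+)}$ since $q(0)=0$. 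All steps check out for every $\lambda\in\R$ (both factors in the symmetrized product have the same sign, so no integrability issues arise beyond the finiteness of $\E{}{e^{\lambda Z}}$ already implicit in the statement), and the bookkeeping in the final reduction is exactly right.
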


Together with \eqref{eq:tensor} this lemma yields:
\begin{equation}
\label{eq:lsi}
\Ent\, e^{\lambda T}\le \sum_{i=1}^\infty \E{}{e^{\lambda T}q(\lambda(T_i'-T)_+)}
\end{equation}
where $T_i'$ is the passage time at $e_i$ is replaced with an independent copy $\tau_{e_i}'$. Note that if $\gamma$ is a semi-directed geodesic from $0$ to $x$ for the original weights, then $T_i'-T>0$ only if $e_i \in \gamma$. Since $q(x)$ is increasing in $|x|$, we may apply the bound $q(\lambda(T_i'-T)_+)\le q(\tau_{e_i}')\1(e_i \in \gamma)$. In any of the directed models, the geodesic contains exactly $x_1$ horizontal edges. And by independence,
\begin{equation}
\Ent\, e^{\lambda T}\le \E{}{q(\lambda \tau_{e_i})}\E{}{e^{\lambda T}}x_1 \le \E{}{\tau_{e}^2}\lambda^2 \E{}{\lambda T}.
\end{equation}

\subsection{Left-tail large deviations, proof of Theorem~\ref{Thm:lower_deviation_dir}}\label{ss:left_dev}

Since the expected passage time $E_n(v):=\E{}{T(0,(n,\vn))}$ is Lipschitz as a function of $v$, we have that $E_n(v)/n$ converges uniformly to $\Lambda(v)$ for $v\in [0,v_0]$. So given $\eps>0$ and sufficiently large $n$, we have
\[
|E_n(v)-\Lambda(v)n|\le \frac \eps 2 n \quad \forall v \in  [0,v_0].
\]
Setting $t=\eps \sqrt{n}/2$ in Theorem~\ref{Thm:concentration} then yields the estimate, given that $G$ has finite variance. For arbitrary distributions, let $T_B$ be the passage time where the $G$-distributed weights are truncated at $B$. Under the usual coupling, we have that
\[
T_B(0,x)\le T_B(\gamma(0,x))\le T(\gamma(0,x))=T(0,x).
\]
Given $\eps>0$, choose $B$ large enough such that $\Lambda(v)\le \Lambda_B(v)+\eps/2$ for all $v\in [0,v_0]$. Then the event $T(0,(n,\vn))\le (\Lambda(v)-\eps)n$ implies $T_B(0,(n,\vn))\le (\Lambda(v)-\eps/2)n$, which has small probability.

\subsection{The site percolation model, proof of Lemma~\ref{Lem:spm}}\label{ss:spm}

The idea is to couple the site percolation model with the original FPP model with Bernoulli-distributed edge weights. Let us consider a weight configuration of the site percolation model with parameter $p$. We define a new percolation model on the edges of $\Z^2$ where the weight of an edge is set as the minimum of the weights of the two adjacent sites. Denoting the passage time of this model by $T_*^p(\cdot,\cdot)$, we note that $T_*^p(0,x)\le T_{site}^p(0,x)$. The edges $e$ in the new model are Bernoulli distributed such that $\Prob{\tau(e)=1}=(1-p)^2$. Two edge weights are dependent if and only if they neighbor the same site. We now apply a result by Liggett et.\ al.\ \cite{liggett} to conclude that this dependent edge weight measure stochastically dominates a product measure. In particular, \cite{liggett} implies that 
\[
\Prob{ T^p_{site}(0,x)<N} \le \Prob{ T_{*}^{p}(0,x)<N} \le \Prob{ T_{FPP}^{p^*(p)}(0,x)<N}
\]
where $p^*(p)\to 0$ as $p \to 0$. The lemma now follows from the continuity of the time constant \cite{cox} and large derivation results \cite{kesten} for standard FPP.

\subsection{The limit shape, proof of Theorem~\ref{Thm:limit_shape}}\label{ss:limit_shape}

Usually we define the time constant as $\mu(x)=\lim_{n\to \infty} T(0,nx)/n$ for $x=(x_1,x_2) \in \N^2$. Thus we have the relation $\mu(x)=x_1 \Lambda(x_2/x_1)$. Let $\theta=\arg x$. If convergence of the sets $B_t$ holds, then the boundary of the limit shape is the set $\{x\in \R: \mu(x)=1\}$ which translates to $\cos(\theta)\Lambda(\tan(\theta))=1$. 

Suppose the Shape Theorem does not hold, then there exists $\delta>0$ such that we can find infinity many $x=(x_1,x_2) \in \Z_{>0} \times \Z_{\ge 0}$ such that with positive probability
\[|T(0,x)-\Lambda(x_2/x_1)x_1|>\delta \|x\| .\]

Choose $v_0$ such that $\Lambda(v_0)\le v_0+t_0+\delta/5$ and $v_0>2((\Lambda(0)-t_0)\delta)^{-1}$. Let $v$ be a limit point of $x_2/x_1$. If $v\le v_0$, we get a contradiction by our large deviation results. If $v>v_0$, we must have $\Lambda(x_2/x_1)\le x_2/x_1+t_0+\delta/4$ for large $x$. Using subadditivity and a large deviation estimate, we have almost surely $x_2+t_0x_1 \le T(0,x) \le T(0,(x_1,0)+x_2 \le (\Lambda(0)+\delta/4)x_1+x_2$. Bringing the two bounds together yields a contradiction:
\begin{align*}
&|T(0,x)-\Lambda(x_2/x_1)x_1|\le |T(0,x)-x_2-t_0x_1|+\delta/4\, x_1 \\
&\le  (\Lambda(0)-t_0+\delta/4)x_1+\delta/4\, x_1\le \delta/2 (x_2+x_1) <\delta \|x\|.
\end{align*}

\begin{center}
 \section*{{\normalsize{Acknowledgements}}}
\end{center}
Special thanks go to my advisor Philippe Sosoe for giving helpful comments about my work and always listening to my ideas. I also want to thank Douglas Dow, whose work with Yuri Bakhtin inspired me to pursue this project, for providing feedback and for reaching out to me during the $51^{st}$ Saint-Flour Probability Summer School. So I should also show my appreciation for the organizers of said event, which has a long history \cite{kesten} of supporting the field of FPP. 
\medskip


\begin{thebibliography}{9}
\bibitem{hamwelsh}
Hammersley, J.M. and Welsh, D.J.A. (1965) \emph{First passage percolation, subadditive processes, stochastic networks and generalized renewal theory} Bernoulli--Bayes--Laplace Anniversary Volume
\bibitem{Wierman}
Wierman, J.C.  and Reh, W. (1978) \emph{On conjectures in first passage percolation theory}. Annals of Probability, Vol. 6, pp. 388--397 
\bibitem{cox}
Cox, J.T. (1980) \emph{The time constant of first-passage percolation on the square lattice}. Advances in Applied Probability, Vol. 12, pp. 864--879
\bibitem{coxdurrett}
Cox, J.T. and Durrett, R. (1981) \emph{Some limit theorems for percolation with necessary and sufficient conditions}. Annals of Probability, Vol. 9, pp. 583--603
\bibitem{coxkesten}
Cox, J.T. and Kesten, H. (1981) \emph{On the Continuity of the Time Constant of First-Passage Percolation}. Journal of Applied Probability, Vol. 18, pp. 809--819
\bibitem{durrettliggett}
Durrett, R. and Liggett, T. (1981) \emph{The shape of the limit set in Richardson’s growth model}. Annals of Probability, Vol. 9, pp. 186--193
\bibitem{grimmettkesten}
Grimmett, G. and Kesten, H. (1984) \emph{First-passage percolation, network flow and electrical resistances}. Z.. Wahrsch. Verw. Gebiete, Vol. 66, pp. 335--366
\bibitem{kesten}
Kesten, H. (1986) \emph{Aspects of first passage percolation}. \'{E}cole d'\'{e}t\'{e} de probabilit\'{e}s de Saint-Flour, XIV-1984, Lecture Notes in Math., Vol. 1180, pp. 125--264, Springer, Berlin
\bibitem{liggett}
Liggett, T.M., Schonmann, R.H. and Stacey, A.M. (1997) \emph{Domination by Product Measures}. Annals of Probability, Vol. 25, pp. 71--95
\bibitem{dir:seppalainen}
Sepp\"{a}l\"{a}inen, T. (1998) \emph{Exact limiting shape for a simplified model of first-passage percolation on the plane}. Annals of Probability, Vol. 26, pp. 1232--1250
\bibitem{dir:oconnell}
O'Connell, N.M. (2000) \emph{Directed percolation and tandem queues}. HP Labs technical report, HPL-BRIMS-
2000-28
\bibitem{dir:joh} Johansson, K. (2001) \emph{Discrete orthogonal polynomial ensembles and the Plancherel measure}. Annals of Mathematics, Vol. 153, pp. 259--296
\bibitem{marchand}
Marchand, R. (2002) \emph{Strict inequalities for the time constant in first passage percolation}. Annals of Applied Probability, Vol. 12, pp. 1001--1038
\bibitem{dir:martin04}
Martin, J.B. (2004) \emph{Limiting shape for directed percolation models}. Annals of Probability, Vol. 32, No. 4, pp. 2908--2937
\bibitem{dir:martin09}
Martin, J.B. (2009) \emph{Batch queues, reversibility and first-passage percolation}. Queueing Systems, Vol. 62, pp. 411--427
\bibitem{concentration}
Boucheron, S, Lugosi, G., and Massart, P. (2013) \emph{Concentration inequalities. A nonasymptotic theory of independence. With a foreword by Michel Ledoux}.  Oxford University Press
\bibitem{bigeodesics}
Damron, M and Hanson, J (2017) \emph{Bigeodesics in First-Passage Percolation}. Commun. Math. Phys., Vol. 349, pp. 753--776
\bibitem{50years}
Auffinger, A, Damron, M and Hanson, J (2017) \emph{50 years of First-Passage Percolation}. American Mathematical Society University Lecture Series, Vol. 68
\bibitem{polymer}
Bakhtin, Y.  and Dow, D. (2023) \emph{Differentiability of the Shape Function for Directed Polymers in Continuous Space}. preprint \textit{arXiv:2303.04224}
\bibitem{hjb}
Bakhtin, Y.  and Dow, D. (2023) \emph{Differentiability of the effective Lagrangian for Hamilton--Jacobi--Bellman equations in dynamic random environments}. preprint \textit{arXiv:2305.17276}
\bibitem{coalescence}
Dembin B., Elboim, D and Peled, R. (2024) \emph{Coalescence of geodesics and the BKS midpoint problem in planar first-passage percolation}. Geom. Funct. Anal., Vol. 44, pp. 733--797
\bibitem{pattern}
Jacquet, A. (2025) \emph{Geodesics cross any pattern in first-passage percolation without any moment assumption and with possibly infinite passage times}. Stoch. Processes and their Applications, Vol. 179, 104496


\end{thebibliography}
\end{document}